\documentclass[11pt]{article}
\usepackage{amsmath,amsthm,amsfonts,amssymb,amscd, amsxtra, mathrsfs,enumitem, mathtools}
\usepackage{url}

\usepackage[margin=2.3 cm,nohead]{geometry}
\usepackage{color}
\usepackage{pdfsync}
\usepackage{hyperref}
\usepackage{graphicx}
\usepackage{grffile}
\graphicspath{{/}}
\usepackage{subcaption}
\usepackage{caption}
\usepackage{booktabs}    
\usepackage{float}  
\usepackage[
  backend=biber,
  style=ieee,
  url=false,
  isbn=false,
  sortcites=true
]{biblatex}

\usepackage{multicol}
\usepackage{multirow}
\usepackage{siunitx}
\setlist[enumerate]{label=(\roman*), align=left}
\newtheorem{theorem}{Theorem}
\newtheorem{lemma}[theorem]{Lemma}
\newtheorem{definition}{Definition}

\newtheorem{remark}{Remark}

\newtheorem{assumption}{Assumption}

\usepackage{graphicx,float}

\usepackage{algorithm}
\usepackage{algpseudocode}

\newcommand{\R}{\mathbb{R}}

\usepackage{amssymb}
\usepackage{relsize}

\algnewcommand{\Input}[1]{%
  \State \textbf{Input:} {\raggedright #1}
  
}

\algnewcommand{\Initialize}[1]{%
  \State \textbf{Initialize:}
  \Statex \hspace*{\algorithmicindent}\parbox[t]{.8\linewidth}{\raggedright #1}
}

\algnewcommand{\Output}[1]{%
  \State \textbf{Output:} {\raggedright #1}
}

\newcommand{\algoO}{\textbf{\smaller IPCMAS1}~}
\newcommand{\algoT}{\textbf{\smaller IPCMAS2}~}
\newcommand{\algoDIM}{\textbf{\smaller DIPCM}~}
\newcommand{\algoD}{\textbf{\smaller DeyHICPP}~}
\newcommand{\algoS}{{\textbf{\smaller SICIP}~}}

\begin{document}
\title{Projection and contraction methods with double inertial steps  for
 variational inclusion problems on Hilbert spaces}

\author{
Moin  Uddin\thanks{Interdisciplinary Research Center for Smart Mobility and Logistics, King Fahd University of Petroleum \& Minerals, Dhahran, 31261, Saudi Arabia, (e-mail:{\tt umoin2723@gmail.com}).}
\and 
Mohammed Alshahrani \thanks{Corresponding author. Department of Mathematics, King Fahd University of Petroleum \& Minerals, Dhahran, 31261, Saudi Arabia\\
Interdisciplinary Research Center for Smart Mobility and Logistics, King Fahd University of Petroleum \& Minerals, Dhahran, 31261, Saudi Arabia, (e-mail:{\tt mshahrani@kfupm.edu.sa}).}
\and
Qamrul Hasan Ansari \thanks{Department of Mathematics, King Fahd University of Petroleum \& Minerals, Dhahran, 31261, Saudi Arabia\\
Interdisciplinary Research Center for Smart Mobility and Logistics, King Fahd University of Petroleum \& Minerals, Dhahran, 31261, Saudi Arabia, (e-mail:{\tt qhansari@gmail.com}).}
}

\maketitle

\begin{abstract}
{
In this paper, we propose three projection--contraction algorithms for solving variational inclusion problems in the setting of Hilbert spaces, each incorporating a double inertial technique into the projection--contraction framework. The first algorithm \algoO achieves weak convergence under monotonicity and Lipschitz continuity of the single-valued operator, with an adaptive stepsize rule that does not require prior knowledge of the Lipschitz constant. A modified variant \algoT of \algoO attains $R$-linear convergence under the strong monotonicity assumption. The third algorithm \algoDIM obtains strong convergence to the minimum-norm solution without requiring strong monotonicity. We illustrate the proposed methods on an abstract variational inclusion problem and apply them to the split feasibility problem and the elastic net regularization problem, comparing them with existing methods in the literature. The $R$-linear convergence of \algoT is also verified through numerical simulations.
}
\end{abstract}

\noindent
{\bf Keywords:} Variational Inclusion Problems, Projection Contraction Methods,
Weak Convergence, Linear Convergence, Split Feasibility Problems, 
Elastic Net Problem\\

\medskip

\noindent
{\bf AMS subject classification:}  47H05, 47J20,49J40, 65K15, 90C25.


\section{Introduction}


The variational inclusion problem unifies a range of mathematical models,
including variational inequalities, convex minimization, split feasibility,
and equilibrium problems. It is widely used in machine learning, signal and
image processing, and inverse problems;
See, for example, \parencite{combettes_signal_2005, duchi_efficient_2009, raguet_generalized_2013}, and the references therein.

Let \(\mathscr{H}\) be a Hilbert space  with the inner product 
\(\langle \cdot, \cdot \rangle\) and the induced norm \(\|\cdot\|\). 
Consider a set-valued operator \(A \colon \mathscr{H} \to 2^{\mathscr{H}}\) 
and a single-valued operator \(B \colon \mathscr{H} \to \mathscr{H}\). 
The \emph{variational inclusion problem} (in short, VIP) consists in 
finding a point \(x \in \mathscr{H}\) such that
\begin{equation}\label{mainproblem}
   {\mathbf{0}} \in A(x) + B(x). 
\end{equation}
We denote by \(\Omega := (A + B)^{-1}({\mathbf{0}})\) the solution set of this problem. 
When $B$ is a zero operator, the problem \eqref{mainproblem} reduces to the classical
monotone inclusion studied in \cite{rockafellar_monotone_1976}, which is central to
convex optimization and variational analysis.

One of the most fundamental {algorithms} for solving the variational inclusion problem (VIP) 
is the forward--backward splitting method (FBSM), originally proposed by Lions 
\parencite{lions_splitting_1979} that generates iterates according to
\[
x_{n+1} = J^{A}_{\lambda}\big(x_n - \lambda B(x_n)\big),
\]
where $\lambda > 0$ is an appropriate stepsize and $J^{A}_{\lambda} = (I + \lambda A)^{-1}$
denotes the resolvent of the operator $A$. The convergence of FBSM is guaranteed under the maximal monotonicity of $A$ and the restrictive assumption that the operator $B$ is $\vartheta$-inverse strongly monotone (or $\vartheta$-cocoercive); that is, there exists $\vartheta > 0$ such that
\[
\langle B(x) - B(y),\, x - y \rangle \geq \vartheta \|B(x) - B(y)\|^2, 
\quad \forall x,y \in \mathscr{H}.
\]
This cocoercivity condition is restrictive in many applications,
limiting the applicability of the classical forward--backward approach.

To weaken this requirement, Tseng \cite{tseng_modified_2000} proposed a refined variant 
known as the forward--backward--forward (FBF) algorithm, which eliminates the need of 
cocoercivity and requires only the Lipschitz continuity of $B$. 
This comes at the cost of an additional forward evaluation per iteration. 
Tseng's iteration is given by
\begin{equation}\label{FBFMethod}
\begin{cases}
y_n = J^{A}_{\lambda_n}(x_n - \lambda_n B(x_n)), \\
x_{n+1} = y_n - \lambda_n \big(B(y_n) - B(x_n)\big),
\end{cases}
\quad n \in \mathbb{N},
\end{equation}
which converges weakly to a solution of the VIP provided that $B$ is $L$-Lipschitz continuous 
and $\lambda_n \in (0, 1/L)$. Numerous extensions of the FBF method have been studied
{in the literature}; 
See, for example, \parencite{bot_inertial_2016, bot_relaxed_2023, cholamjiak_relaxed_2021, ofem_relaxed_2024},
and the references therein.

Another important method that avoids the cocoercivity assumption is 
the following projection--contraction method (PCM) proposed by 
Zhang and Wang \cite{zhang_proximal_2018}:
\begin{align}\label{PCM}
y_n &= J^{A}_{\lambda_n}(x_n - \lambda_n B(x_n)), \notag\\
x_{n+1} &= x_n - \gamma\, \eta_n\, d_n,
\end{align}
where
\[
d_n = (x_n - y_n) - \lambda_n\big(B(x_n) - B(y_n)\big),
\]
and
\[
\eta_n =
\begin{cases}
\dfrac{\langle x_n - y_n,\, d_n \rangle}{\|d_n\|^2}, & \text{if } d_n \neq 0, \\
0, & \text{otherwise}.
\end{cases}
\]
where $\gamma \in (0, 2)$ is a relaxation parameter, and the stepsizes $\lambda_n$ 
are chosen according to the rules in \cite{zhang_proximal_2018}. 
They established weak convergence of the sequence $\{x_n\}$ to a solution of the VIP 
under suitable assumptions.  

Accelerating convergence is important for computing accurate solutions to practical problems.
The inertial technique, which comes from the discrete approximation of second-order dissipative
dynamical systems~\parencite{attouch_heavy_2000, attouch_asymptotic_2002}, is now a common tool
for speeding up the convergence of iterative algorithms. By adding momentum-type terms based on
previous iterations, inertial methods often give faster practical performance. They have been
widely applied in convex and nonconvex optimization, monotone inclusion problems, and variational
inequalities;
see, for example,~\parencite{wang_inertial_2022, dong_inertial_2018, bot_relaxed_2023, bot_inertial_2016}.

{ In particular, several inertial splitting methods have been proposed for variational inclusions and related split-type problems; for instance, Peeyada et al.~\cite{peeyadaInertialMannForwardbackward2022} studied an inertial Mann forward--backward splitting algorithm; Van Hieu et al.~\cite{vanhieuIterativeRegularizationMethods2022} proposed iterative regularization methods with new stepsize rules; Nguyen and Kim Hoa~\cite{nguyenInertialReflectedforwardbackwardSplitting2026} introduced an inertial reflected-forward-backward splitting method with improved step sizes; Reich et al.~\cite{reichVariationalInequalitiesSolution2023} studied variational inequalities over the solution sets of split variational inclusion problems; Wang~\cite{Wang2022a} investigated the split feasibility problem with multiple output sets for demicontractive mappings; and Ezeafulukwe et al.~\cite{ezeafulukweMonotoneVariationalInclusion2025} proposed a modified splitting approach for monotone variational inclusion problems.}

{
Recently, Dey~\cite{dey_hybrid_2023} introduced a hybrid inertial and contraction proximal point algorithm for solving~\eqref{mainproblem}. Specifically, Dey's algorithm generates iterates according to
\begin{equation}\label{eq:dey_algorithm}
 \begin{aligned}
 \zeta_n &=
\begin{cases}
\min\left\{\zeta, \dfrac{\tau_n}{\|x_n - x_{n-1}\|}\right\}, & \text{if } x_n \neq x_{n-1}, \\
\zeta, & \text{otherwise},
\end{cases} \\[2mm]
w_n &= x_n + \zeta_n (x_n - x_{n-1}), \\[1mm]
y_n &= J^{A}_{\lambda_n}(w_n - \lambda_n B(w_n)), \\[1mm]
z_n &= w_n - \gamma \eta_n d_{n}, \\[1mm]
x_{n+1} &= (1 - \theta_n - \delta_n)x_n + \delta_n z_n.
\end{aligned}
\end{equation}
where $\zeta>0, \gamma\in(0,2)$, the stepsize $\lambda_{n}\subset[a,b] \subset (0, 1/L)$, $d_n$ and $\eta_n$ are defined as in the PCM framework, and $z_n$ is an auxiliary contraction step. Dey established strong convergence of this scheme under the maximal monotonicity of $A$ and the $L$-Lipschitz continuity and monotonicity of~$B$.
Dey's method has two limitations: (i)~the stepsize $\lambda_{n}$ depends explicitly on the Lipschitz constant $L$ of $B$, which is often unknown or difficult to compute in practice; and (ii)~only a single inertial extrapolation step is employed.

Several works have explored the use of double inertial extrapolation terms to improve convergence behavior. For example, Suantai et al.~\cite{suantaiForwardBackwardForward2023} proposed a double inertial forward--backward--forward algorithm for monotone inclusions, in which two inertial terms are incorporated into the Tseng method \cite{tseng_modified_2000} together with an adaptive stepsize rule.

Moreover, Suantai et al.~\cite{Suantai2024} introduced a projection--contraction algorithm \algoS that incorporates two inertial extrapolation terms. Specifically, the \algoS\ algorithm generates iterates according to
\begin{equation}\label{eq:suantai2024_algorithm}
\begin{cases}
w_n = x_n + \vartheta_{n}(x_n - x_{n-1}) + \varsigma_n(x_{n-1} - x_{n-2}), \\
y_n = J^{A}_{\lambda_n}(w_n - \lambda_n B(w_n)), \\
d_{n} = (w_n - y_n) - \lambda_n(B(w_n) - B(y_n)), \\
x_{n+1} = w_n - \gamma \eta_n\, d_{n},
\end{cases}
\end{equation}
where $\eta_n = \langle w_n - y_n, d_{n}\rangle / \|d_{n}\|^2$, and $\{\vartheta_n\}$, $\{\varsigma_n\}$ are nonnegative sequences satisfying $\sum\limits_{n=1}^{\infty} \vartheta_n < +\infty$ and $\sum\limits_{n=1}^{\infty} \varsigma_n < +\infty$. The stepsize is updated adaptively via $\lambda_{n+1} = \min\{\mu\|w_n - y_n\|/\|B(w_n) - B(y_n)\|,\, \lambda_n\}$, avoiding dependence on the Lipschitz constant. They established weak convergence of the algorithm under monotonicity assumptions, with applications to data classification.
The \algoS\ algorithm combines its two inertial terms in a single extrapolation step using three consecutive iterates ($x_n, x_{n-1}, x_{n-2}$), and the summability conditions on $\{\vartheta_{n}\}$ and $\{\varsigma_{n}\}$ force the inertial effect to diminish over iterations. In contrast, the method proposed in this paper employs two \emph{independent} inertial steps with separate parameters that are not required to decay and provides stronger convergence guarantees, including strong convergence and
R-linear convergence.
}


Motivated by recent inertial-acceleration techniques, this paper introduces a double-inertial projection--contraction method for solving variational inclusion problems. Our primary contributions are:
\begin{enumerate}
    \item {We propose a double-inertial projection--contraction algorithm \algoO that incorporates two separate inertial extrapolation steps into the projection--contraction framework. Unlike the \algoS\ approach~\cite{Suantai2024}, where two inertial terms are combined in a single extrapolation step using three consecutive iterates ($x_k, x_{k-1}, x_{k-2}$), our method applies two independent inertial steps to different components of the iterate update:
    \[
    z_n = x_n + \beta(x_n - x_{n-1}), \qquad w_n = x_n + \theta_n(x_n - x_{n-1}),
    \]
    followed by $x_{n+1} = (1-\alpha)z_n + \alpha u_n$, where $u_n$ is the projection--contraction correction based on $w_n$. This decoupling allows the acceleration mechanism (via $z_n$) and the projection--contraction correction (via $w_n$) to be controlled independently through separate parameters $\beta$ and $\theta_n$. We also develop an adaptive stepsize strategy that removes any dependence on the Lipschitz constant $L$ of $B$, improving on Dey's algorithm~\cite{dey_hybrid_2023}, which requires $\lambda \in (0, 1/L)$.}

    \item We establish weak convergence of the proposed algorithm without requiring prior knowledge of the Lipschitz constant. Under the strong monotonicity assumption on operator $A$, we prove strong convergence to the unique solution of the variational inclusion problem. A modified variant \algoT is shown to enjoy $R$-linear convergence under the same hypothesis.

    \item {We further introduce a third algorithm \algoDIM that couples the projection--contraction step with a three-term Halpern--Mann iteration. Under monotonicity and Lipschitz continuity of $B$ alone---without the strong monotonicity assumption---we prove that the iterates of \algoDIM converge strongly to $P_\Omega(\mathbf{0})$, the minimum-norm element of the solution set.}

    \item {Numerical experiments on abstract variational inclusion problems compare our double-inertial scheme with the single-inertial \algoD\ algorithm~\cite{dey_hybrid_2023} and the double-inertial \algoS\ algorithm~\cite{Suantai2024}. Controlled comparisons with synchronized inertial parameters isolate the structural benefits of our approach, and a sensitivity analysis of the key algorithmic parameters is provided. We also apply the method to the split feasibility problem and elastic net regularization. Numerical simulations confirm the $R$-linear convergence of \algoT\ in practice.}
\end{enumerate}

{This paper is organized as follows. Section~\ref{S:2} collects basic definitions and lemmas used in the convergence analysis. Section~\ref{S:3} presents the three proposed algorithms and their convergence theory: \algoO\ with weak convergence under monotonicity and Lipschitz continuity of $B$, without requiring the Lipschitz constant; strong convergence of \algoO\ under the strong monotonicity assumption; the modified variant \algoT\ with $R$-linear convergence under the same assumption; and \algoDIM, which couples the projection--contraction step with a three-term Halpern--Mann iteration to obtain strong convergence to $P_\Omega(\mathbf{0})$ without strong monotonicity. Section~\ref{S:5} reports numerical experiments comparing the three proposed algorithms with the methods of Dey~\cite{dey_hybrid_2023} and Suantai et al.~\cite{Suantai2024} on a classical variational inclusion problem, a split feasibility problem in $L^2[0,1]$, and an elastic net regularization problem; the $R$-linear convergence of \algoT\ is also verified empirically. The paper concludes in Section~\ref{S:6}.}


\section{Preliminaries}\label{S:2}


In this section, we recall some basic definitions and results that will be used 
in the convergence analysis of our method. 
Let $\{x_n\}$ be a sequence in $\mathscr{H}$. 
We write $x_n \rightharpoonup x$ (respectively, $x_n \to x$) to denote the weak 
(respectively, strong) convergence of $\{x_n\}$ to $x \in \mathscr{H}$ as $n \to \infty$. 

Let us begin with some concepts related to the monotonicity of an operator.

\begin{definition}\label{def:lipschitz}
A single-valued operator $B\colon \mathscr{H} \to \mathscr{H}$ is called
\begin{itemize}
    \item[{\rm(i)}] \textit{monotone} if
    $$\langle B(x) -B(y),\ x - y \rangle \geq 0, \quad\forall x, y \in \mathscr{H};$$
    
    \item[{\rm(ii)}] \textit{$\nu$-strongly monotone} if there exists  $\nu > 0$ such that
    $$\langle B(x) - B(y),\ x - y \rangle \geq \nu \|x - y\|^2,
    \quad \forall x, y \in \mathscr{H};$$
    
    \item[{\rm(iii)}] \textit{$L$-Lipschitz continuous} if there exists  $L > 0$ such that
    $$\|B(x) - B(y)\| \leq L\,\|x - y\|, \quad\forall x, y \in \mathscr{H}.$$
\end{itemize}
\end{definition}

Let $A : \mathscr{H} \to 2^{\mathscr{H}}$ be a set-valued operator. 
The \textit{graph} of $A$ is the subset of $\mathscr{H} \times \mathscr{H}$ defined by
\[
\operatorname{Graph}(A) := \{(x, u) \in \mathscr{H} \times \mathscr{H} : u \in A(x)\}.
\]

\begin{definition}\label{def:maximal_monotone}{\rm\cite{bauschke_convex_2017}}
A set-valued operator $A \colon \mathscr{H} \to 2^\mathscr{H}$ is said to be 
\begin{itemize}
    \item[{\rm(i)}] \textit{monotone} if 
    $$  \langle u - v, x - y \rangle \geq 0, \quad \forall x, y \in \mathscr{H}
    \mbox{ and } \forall u \in A(x), \, v \in A(y);$$

    \item[{\rm(ii)}] \textit{maximally monotone} if it is monotone and for any given 
    $(x, u) \in \mathscr{H} \times \mathscr{H}$,
    $$ \langle u - v, x - y \rangle \geq 0, \, \, \forall  (y, v) \in \operatorname{Graph}(A)
    \mbox{ implies } u \in A(x);$$

    \item[{\rm(iii)}]  \textit{$\nu$-strongly monotone} if there exists $\nu > 0$ 
    such that 
    $$ \langle u - v, x - y \rangle \geq \nu \|x - y\|^2, \quad \forall x, y \in \mathscr{H}, 
    \mbox{ and }  u \in A(x), \, v \in A(y).
    $$
\end{itemize}
\end{definition}

Let $A \colon \mathscr{H} \to 2^{\mathscr{H}}$ be a set-valued maximal monotone operator. 
{Recall that, for $\lambda > 0$, the resolvent of $A$ is a single-valued operator
$J^{A}_{\lambda}\colon\mathscr{H} \to \mathscr{H}$ defined by
\[
J^{A}_{\lambda}(x) =(I+\lambda A)^{-1}(x),\quad \forall x\in \mathscr{H},
\]
where $I$ is the identity operator on $\mathscr{H}$.} We have the following lemmas.

\begin{lemma}{\rm(\cite[{Lemma 1}]{gibali_tseng_2018})}\label{fiexdlemma}
Let $A \colon \mathscr{H} \to 2^{\mathscr{H}}$  be a maximal monotone operator 
and $B :\mathscr{H} \to \mathscr{H}$ be a single-valued  operator.  
For each $\lambda > 0$, define
\(
T_\lambda(x) := {J^{A}_{\lambda}}(x - \lambda B(x)) 
\) for all $x\in\mathscr{H}$.
Then,
$x \in (A + B)^{-1}({\mathbf{0}})$  if and only if  $x\in \operatorname{Fix}(T_\lambda),$
where $\operatorname{Fix}(T_\lambda) := \{x \in \mathscr{H} : T_\lambda (x)=x\}.$ 
\end{lemma}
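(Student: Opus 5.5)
The plan is to unwind the definition of the resolvent and show that both conditions reduce to the same inclusion. The one structural fact needed is that, since $A$ is maximally monotone, $J^{A}_{\lambda}=(I+\lambda A)^{-1}$ is single-valued and defined on all of $\mathscr{H}$ (Minty's theorem, as recalled just before the lemma). Consequently, for any $p,q\in\mathscr{H}$ we have
\[
p = J^{A}_{\lambda}(q) \iff q \in p + \lambda A(p).
\]
The direction from right to left uses single-valuedness: if $q\in (I+\lambda A)(p)$, then $p\in (I+\lambda A)^{-1}(q)=\{J^{A}_{\lambda}(q)\}$.

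Fix $\lambda>0$ and $x\in\mathscr{H}$, and apply this equivalence with $p=x$ and $q=x-\lambda B(x)$. This gives the chain
\[
x = T_\lambda(x) \iff x-\lambda B(x)\in x+\lambda A(x) \iff -\lambda B(x)\in \lambda A(x) \iff \mathbf{0}\in A(x)+B(x).
\]
The second step cancels $x$ from both sides of the inclusion. The third step divides by $\lambda>0$, using $\lambda A(x)=\{\lambda u: u\in A(x)\}$, and then adds $B(x)$, which is legitimate because $B$ is single-valued. The last condition says exactly that $x\in (A+B)^{-1}(\mathbf{0})$, so the lemma follows.

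There is no genuine obstacle. The only point needing care is the direction ``$x-\lambda B(x)\in x+\lambda A(x)\Rightarrow x=J^{A}_{\lambda}(x-\lambda B(x))$''. It depends on the resolvent being single-valued, which holds because maximal monotonicity of $A$ makes $(I+\lambda A)^{-1}$ single-valued with full domain. No property of $B$ beyond being a single-valued map is used.
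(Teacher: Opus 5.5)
Your proof is correct and is essentially the standard argument. The paper does not reprove this lemma; it cites it from Gibali--Thong, and it relies on the same resolvent unwinding $p=J^{A}_{\lambda}(q)\iff q\in p+\lambda A(p)$ elsewhere, for instance in $w_n-\lambda_nB(w_n)\in(I+\lambda_nA)(y_n)$ in the proof of Lemma~\ref{lemma:boundedness}, with $B$ used only as a single-valued map, just as you observe.
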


\begin{lemma}\label{lemma:sum_maximal_monotone}{\rm (\cite[{Corollary 20.28}]{bauschke_convex_2017})}
Let $A \colon \mathscr{H} \to 2^{\mathscr{H}}$ be a maximal monotone operator and 
$B \colon \mathscr{H} \to \mathscr{H}$ be a monotone and {$L$-}Lipschitz continuous operator. 
Then the set-valued operator $A + B$ is maximal monotone.
\end{lemma}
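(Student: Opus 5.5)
The plan is to show directly that $A+B$ is monotone and that its graph cannot be properly extended. Monotonicity is immediate. For $x,y\in\mathscr{H}$, $u\in A(x)$ and $v\in A(y)$ we have
\[
\langle (u+B(x))-(v+B(y)),x-y\rangle=\langle u-v,x-y\rangle+\langle B(x)-B(y),x-y\rangle\ge 0,
\]
because $A$ and $B$ are both monotone. So the real work is maximality.

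The first step is to record that $B$ itself is maximally monotone. Since $B$ is monotone and continuous (Lipschitz) with full domain $\mathscr{H}$, this follows from the standard Minty trick. Suppose $\langle w-B(y),z-y\rangle\ge0$ for all $y$. Put $y=z-t h$ with $t>0$, divide by $t$, and let $t\downarrow0$; continuity of $B$ gives $\langle w-B(z),h\rangle\ge0$ for every $h$, hence $w=B(z)$.

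The main step is maximality of the sum, via Minty's theorem: a monotone operator $M$ is maximal if and only if $\operatorname{ran}(I+\gamma M)=\mathscr{H}$ for some $\gamma>0$. Fix $z\in\mathscr{H}$. Solving $z\in x+\gamma A(x)+\gamma B(x)$ is equivalent to the fixed-point equation
\[
x=J^{A}_{\gamma}\big(z-\gamma B(x)\big).
\]
The resolvent $J^{A}_{\gamma}$ is single-valued, defined on all of $\mathscr{H}$ and nonexpansive, because $A$ is maximal monotone. Hence the map $x\mapsto J^{A}_{\gamma}(z-\gamma B(x))$ is $\gamma L$-Lipschitz. Choosing $\gamma<1/L$ makes it a contraction, and Banach's fixed point theorem produces a solution $x$. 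This shows $\operatorname{ran}(I+\gamma(A+B))=\mathscr{H}$, and together with the monotonicity above, Minty's theorem gives that $A+B$ is maximal monotone.

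The only delicate point is that Minty's characterization is needed for a single $\gamma$ rather than for all $\gamma$; this is the standard form, so taking $\gamma<1/L$ suffices. Alternatively, one could cite the Rockafellar sum theorem, since $\operatorname{dom}B=\mathscr{H}$ and the constraint qualification $\operatorname{int}\operatorname{dom}B\cap\operatorname{dom}A\neq\emptyset$ holds trivially. The contraction argument is the self-contained route I would write out.
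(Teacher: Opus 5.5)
Your proof is correct, and it takes a genuinely different route from the paper. The paper gives no argument of its own for this lemma; it simply cites Bauschke--Combettes. There the result comes out of general theory: a continuous monotone operator defined on all of $\mathscr{H}$ is maximally monotone (your first step), and a Rockafellar-type sum theorem then applies when one of the two maximally monotone operators has full domain. That is exactly the alternative you mention at the end, and in that book it rests on considerably heavier convex-analytic machinery.

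Your main route is the classical contraction argument instead. For $A+B$ you only use the elementary direction of Minty's theorem: if $(x,u)$ is monotonically related to $\operatorname{Graph}(A+B)$ and $(y,v)\in\operatorname{Graph}(A+B)$ satisfies $y+\gamma v=x+\gamma u$, then $-\gamma\|u-v\|^2\ge 0$, forcing $u=v$ and $x=y$. This works for any fixed $\gamma>0$, so the single-$\gamma$ issue you flag is harmless. Beyond that you need only the fact, already taken for granted throughout the paper, that $J^{A}_{\gamma}$ is everywhere defined and nonexpansive, plus Banach's fixed point theorem.

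What your approach buys is an elementary, self-contained, even constructive proof: the resolvent of $A+B$ at $z$ is the limit of the Picard iterates $x_{k+1}=J^{A}_{\gamma}(z-\gamma B(x_k))$. What it costs is generality. The Lipschitz hypothesis is used essentially through $\gamma L<1$, whereas the textbook route works for $B$ merely continuous (indeed hemicontinuous) and monotone with full domain.

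Note also that your first step, maximality of $B$ via the Minty trick, is never used in the contraction argument. It is needed only for the sum-theorem alternative.
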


\begin{lemma}\label{lemma:inner_product_identity}
For any  $x, y \in \mathscr{H}$ and $\kappa\in\mathbb{R}$, the following identities hold: 
\begin{itemize}
\item[{\rm(i)}] 
    $2\langle x, y \rangle = \|x\|^2 + \|y\|^2 - \|x - y\|^2 = \|x + y\|^2 - \|x\|^2 - \|y\|^2.$
    
    \item[{\rm(ii)}] 
    $\|\kappa x+(1-\kappa)y\|^2=\kappa \|x\|^2+(1-\kappa)\|y\|^2-\kappa(1-\kappa)\|x-y\|^2.$

    \item[{\rm(iii)}] 
    $\|x+y\|^{2}\leq\|x\|^{2}+2\,\left\langle x+y, y\right\rangle.$
   
\end{itemize}
\end{lemma}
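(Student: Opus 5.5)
Each identity follows by expanding squared norms with bilinearity and symmetry of the inner product; no earlier result is needed.

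For (i), we expand $\|x-y\|^2=\langle x-y,x-y\rangle=\|x\|^2-2\langle x,y\rangle+\|y\|^2$ and rearrange to get the first equality. Expanding $\|x+y\|^2=\|x\|^2+2\langle x,y\rangle+\|y\|^2$ and rearranging gives the second.

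For (ii), we expand the left-hand side:
\[
\|\kappa x+(1-\kappa)y\|^2=\kappa^2\|x\|^2+(1-\kappa)^2\|y\|^2+2\kappa(1-\kappa)\langle x,y\rangle .
\]
We then substitute $2\langle x,y\rangle=\|x\|^2+\|y\|^2-\|x-y\|^2$ from (i). The coefficient of $\|x\|^2$ becomes $\kappa^2+\kappa(1-\kappa)=\kappa$, and that of $\|y\|^2$ becomes $(1-\kappa)^2+\kappa(1-\kappa)=1-\kappa$. The remaining term is $-\kappa(1-\kappa)\|x-y\|^2$, which is the claim. Since this is pure algebra, it holds for every real $\kappa$.

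For (iii), we start from $\|x\|^2=\|(x+y)-y\|^2=\|x+y\|^2-2\langle x+y,y\rangle+\|y\|^2$. Hence $\|x+y\|^2=\|x\|^2+2\langle x+y,y\rangle-\|y\|^2\le \|x\|^2+2\langle x+y,y\rangle$, since $\|y\|^2\ge 0$. No step is a real obstacle; the only point needing attention is the coefficient bookkeeping in (ii).
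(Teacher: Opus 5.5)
Your proof is correct: each of (i)--(iii) follows from exactly the direct expansions you give, including your coefficient check in (ii) and the rewriting $x=(x+y)-y$ in (iii). The paper states this lemma without proof as standard identities in a (real) Hilbert space, so your expansion is the standard justification it takes for granted, and your use of symmetry of the inner product matches that implicit real setting.
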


\begin{lemma}{\rm(\cite[{Lemma 2.2}]{mainge_convergence_2008})}\label{lemma:auxiliaryconvergence}
Let $\{\varphi_n\}$, $\{\delta_n\}$, and $\{\zeta_n\}$ be sequences in $[0, +\infty)$ 
such that
\begin{equation*}
\varphi_{n+1} \leq \varphi_n + \zeta_n (\varphi_n - \varphi_{n-1}) + \delta_n, \quad \forall n 
\geq 1, \quad \sum_{n=1}^{\infty} \delta_n < +\infty,
\end{equation*}
and suppose that there exists a real number $\zeta$  with $0 \leq \zeta_n \leq \zeta<1$ for all 
$n \in \mathbb{N}$. Then the following conclusions hold:
\begin{enumerate}
\item [{\rm(i)}]$\sum_{n=1}^{\infty} [\varphi_n - \varphi_{n-1}]_{+}<{+}\infty$, 
where $[m]_+ := \max\{m, 0\}$;

\item [{\rm(ii)}] there exists $\varphi^* \in [0, {+}\infty)$ such that 
$\displaystyle \lim\limits_{n \to \infty} \varphi_n = \varphi^*$.
\end{enumerate}
\end{lemma}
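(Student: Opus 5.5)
Set $a_n := [\varphi_n-\varphi_{n-1}]_+$ for $n\ge 1$, so $a_n\ge 0$. The plan is to show first that $\sum_n a_n<+\infty$, and then to deduce the convergence of $\{\varphi_n\}$.

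\emph{Recursive bound for $a_n$.} Subtract $\varphi_n$ from both sides of the hypothesis:
\[
\varphi_{n+1}-\varphi_n\le \zeta_n(\varphi_n-\varphi_{n-1})+\delta_n .
\]
Since $\zeta_n\ge 0$, we have $\zeta_n(\varphi_n-\varphi_{n-1})\le \zeta_n a_n\le \zeta a_n$. Moreover $\delta_n\ge 0$, so the right-hand side $\zeta a_n+\delta_n$ is nonnegative. Taking positive parts therefore gives
\[
a_{n+1}\le \zeta a_n+\delta_n \quad\text{for all } n\ge 1 .
\]

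\emph{Summability, which gives (i).} Iterate this inequality:
\[
a_{n+1}\le \zeta^{n}a_1+\sum_{k=1}^{n}\zeta^{n-k}\delta_k .
\]
Sum over $n$ and exchange the order of summation (all terms are nonnegative). This yields
\[
\sum_{n\ge 1} a_{n+1}\le \frac{a_1}{1-\zeta}+\frac{1}{1-\zeta}\sum_{k\ge 1}\delta_k<+\infty,
\]
using $\zeta<1$ and $\sum_n\delta_n<+\infty$. This proves (i).

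\emph{Convergence, which gives (ii).} Define
\[
\psi_n:=\varphi_n-\sum_{k=1}^{n} a_k .
\]
Then $\psi_{n+1}-\psi_n=(\varphi_{n+1}-\varphi_n)-a_{n+1}\le 0$, so $\{\psi_n\}$ is nonincreasing. It is also bounded below, since $\psi_n\ge -\sum_{k\ge1}a_k>-\infty$ by (i) and $\varphi_n\ge 0$. Hence $\psi_n$ converges. Because $\sum_{k\le n}a_k$ converges as well, $\varphi_n$ converges to some limit $\varphi^*$. Finally $\varphi^*\ge 0$, since every $\varphi_n\ge 0$; this gives (ii).

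The main obstacle is the first step, namely absorbing $\zeta_n$ and the sign of $\varphi_n-\varphi_{n-1}$ correctly. When that difference is negative, the term $\zeta_n(\varphi_n-\varphi_{n-1})$ is nonpositive and is bounded by $0=\zeta a_n$, so both sign cases are covered. After that, the argument is a routine geometric-series estimate.
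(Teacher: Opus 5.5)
Your proof is correct and uses the standard argument behind the cited lemma. The paper gives no proof of its own, only the citation to Maing\'e, but in the proof of Theorem~\ref{thm:rate} it runs exactly your positive-part recursion $[\Gamma_{n+1}]_+\le a[\Gamma_n]_+ +\varsigma_n$ followed by geometric summation. The only cosmetic point is that your summation bounds $\sum_{n\ge 1}a_{n+1}$, so to state (i) as written you add the finite term $a_1=[\varphi_1-\varphi_0]_+$.
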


{

\begin{lemma}{\rm(\cite[Lemma 2.6]{Saejung2012})}\label{lmm:convergence}
Let $\{a_n\}$ be a nonnegative real sequence, $\{\alpha_n\} \subset (0,1)$ with
$\sum\limits_{n=1}^{\infty} \alpha_n = \infty$, and let $\{b_n\}$ be a real sequence. Assume that
$a_{n+1} \le (1-\alpha_n)a_n + \alpha_n b_n$ for all $n \ge 1$. If
$\limsup\limits_{k\to\infty} b_{n_k} \le 0$ for every subsequence $\{a_{n_k}\}$ satisfying
$\liminf\limits_{k\to\infty}(a_{n_k+1} - a_{n_k}) \ge 0$, then $\lim\limits_{n\to\infty} a_n = 0$.
\end{lemma}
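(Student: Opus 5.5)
This is the Saejung--Yotkaya lemma, and I would prove it by contradiction combined with a case split on whether $\{a_n\}$ is eventually monotone. The recursion
\[
a_{n+1}\le (1-\alpha_n)a_n+\alpha_n b_n
\]
gives $a_{n+1}-a_n\le \alpha_n(b_n-a_n)$. This inequality is the basic tool throughout.

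\emph{Step 1 (boundedness of the tail behaviour).} Assume $\limsup_n a_n = c>0$, possibly $c=+\infty$. The goal is to produce a subsequence $\{a_{n_k}\}$ with $\liminf_k(a_{n_k+1}-a_{n_k})\ge 0$ along which $a_{n_k}$ stays above some fixed $\varepsilon>0$. For such a subsequence the hypothesis gives $\limsup_k b_{n_k}\le 0$. Then $b_{n_k}-a_{n_k}\le -\varepsilon/2$ for large $k$, so $a_{n_k+1}<a_{n_k}$ with a gap. We will see that this contradicts the way the subsequence was chosen.

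\emph{Step 2 (case analysis).} Fix $0<\varepsilon<c$.

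\emph{Case A: there is $N$ such that $a_{n+1}\le a_n$ for all $n\ge N$.} Then $a_n\downarrow a^*\ge 0$, and the full tail $n_k=N+k$ satisfies $\liminf(a_{n+1}-a_n)=0\ge 0$. Hence $\limsup_n b_n\le 0$. If $a^*>0$, then for large $n$ we have $b_n\le a^*/2$, so
\[
a_{n+1}\le a_n-\alpha_n a^*/2 .
\]
Summing and using $\sum\alpha_n=\infty$ gives $a_n\to-\infty$, a contradiction. So $a^*=0$.

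\emph{Case B: otherwise.} Here I use the Maingé-type index $\tau(n)=\max\{k\le n: a_k< a_{k+1}\}$ for $n\ge n_0$. This index is nondecreasing, tends to $\infty$, and satisfies $a_{\tau(n)}<a_{\tau(n)+1}$ and $a_n\le a_{\tau(n)+1}$. The subsequence $\{a_{\tau(n)}\}$ has $a_{\tau(n)+1}-a_{\tau(n)}>0$, so the hypothesis gives $\limsup_n b_{\tau(n)}\le 0$. From $0<a_{\tau(n)+1}-a_{\tau(n)}\le\alpha_{\tau(n)}(b_{\tau(n)}-a_{\tau(n)})$ we get $a_{\tau(n)}<b_{\tau(n)}$, hence $\limsup a_{\tau(n)}\le 0$, so $a_{\tau(n)}\to0$. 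Also
\[
a_{\tau(n)+1}\le (1-\alpha_{\tau(n)})a_{\tau(n)}+\alpha_{\tau(n)}b_{\tau(n)}\le \max\{a_{\tau(n)},b_{\tau(n)}\},
\]
whose $\limsup$ is at most $0$. Therefore $a_n\le a_{\tau(n)+1}\to0$.

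\emph{Main obstacle.} The delicate point is Case B, specifically the final bound on $a_{\tau(n)+1}$. I bound it by $\max\{a_{\tau(n)},b_{\tau(n)}\}$ using only $\alpha_n\in(0,1)$; this is a convex-combination bound and needs no smallness of $\alpha_n$. The remaining care is in verifying that $\tau$ is well defined and unbounded: this holds because Case A fails, so increases $a_k<a_{k+1}$ occur infinitely often.
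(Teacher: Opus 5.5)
Your proof is correct. The paper gives no proof of this lemma (it is quoted from Saejung--Yotkaya), and your case split is the standard argument for results of this type: in the eventually nonincreasing case you use direct summation against $\sum\alpha_n=\infty$ (the monotone special case of Xu's lemma), and when there are infinitely many increases you use Maing\'e's index $\tau(n)=\max\{k\le n: a_k<a_{k+1}\}$, with $a_n\le a_{\tau(n)+1}\le\max\{a_{\tau(n)},b_{\tau(n)}\}$.

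Two cosmetic points. First, Step 1 and the choice of $\varepsilon$ are never used, since Step 2 is a direct proof rather than a contradiction. Second, because $\tau$ can repeat values, $\{a_{\tau(n)}\}$ is not literally a subsequence. Apply the hypothesis to the increasing enumeration of the infinite set $\{k: a_k<a_{k+1}\}$ instead, and then transfer the bound $\limsup b\le 0$ to $b_{\tau(n)}$, using that $\tau(n)$ lies in that set and tends to $\infty$.
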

}

\begin{lemma}{\rm(\cite[{ Lemma 2.47}]{bauschke_convex_2017})}\label{opialcondition}
Let $C$ be a nonempty subset of $\mathscr{H}$ and $\{x_n\}$ be a sequence in $\mathscr{H}$ 
such that $\lim\limits_{n \to \infty} \|x_n - x\|$ exists for all $x \in C$.
If a subsequence of $\{x_n\}$ converges weakly to $x \in C$, 
then the sequence $\{x_n\}$ converges weakly to a point in $C$.
\end{lemma}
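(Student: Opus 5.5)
As worded, the statement cannot be proved, because its hypothesis is too weak. My plan is therefore to show that it fails, and then to prove the version that \cite[Lemma 2.47]{bauschke_convex_2017} actually states and that the convergence analysis needs.

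\textbf{The stated hypothesis is insufficient.} Let $\{e_k\}$ be an orthonormal sequence in $\mathscr{H}$ and take $C=\{\mathbf{0}\}$. Define $x_n=e_n$ for even $n$ and $x_n=e_1$ for odd $n$.
\begin{itemize}
\item[(a)] We have $\|x_n-\mathbf{0}\|=1$ for every $n$, so $\lim_{n\to\infty}\|x_n-x\|$ exists for every $x\in C$.
\item[(b)] The even-indexed subsequence converges weakly to $\mathbf{0}\in C$, by Bessel's inequality.
\item[(c)] The odd-indexed subsequence is constantly $e_1\neq \mathbf{0}$, so it converges weakly to $e_1$.
\end{itemize}
Since a weakly convergent sequence has a unique weak limit, (b) and (c) show that $\{x_n\}$ does not converge weakly. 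The implication is therefore false in general, and it must be read with the hypothesis ``every weak sequential cluster point of $\{x_n\}$ belongs to $C$''. This is the form of \cite[Lemma 2.47]{bauschke_convex_2017}. It is also what the later weak-convergence proof can supply: there one shows that \emph{any} weakly convergent subsequence of the iterates has its limit in $\Omega$, via Lemma~\ref{fiexdlemma} and the maximal monotonicity in Lemma~\ref{lemma:sum_maximal_monotone}.

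\textbf{Plan for the corrected statement.}
\begin{itemize}
\item[(i)] \emph{Existence of a cluster point.} Fix $p\in C$. Since $\lim_{n}\|x_n-p\|$ exists, $\{x_n\}$ is bounded. Hence it has at least one weak sequential cluster point, and by hypothesis every such point lies in $C$.
\item[(ii)] \emph{Uniqueness.} Suppose $x_{n_k}\rightharpoonup x$ and $x_{m_k}\rightharpoonup y$ with $x,y\in C$. For each $n$, Lemma~\ref{lemma:inner_product_identity}(i) gives
\[
2\langle x_n, x-y\rangle=\|x_n-y\|^2-\|x_n-x\|^2+\|x\|^2-\|y\|^2 .
\]
The right-hand side has a limit as $n\to\infty$, because both $x$ and $y$ lie in $C$. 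So $\ell:=\lim_{n}\langle x_n,x-y\rangle$ exists.
\item[(iii)] \emph{Conclusion.} Passing to the limit along the two subsequences gives $\langle x,x-y\rangle=\ell=\langle y,x-y\rangle$. Hence $\|x-y\|^2=0$ and $x=y$. A bounded sequence with exactly one weak sequential cluster point converges weakly to that point: otherwise some subsequence stays outside a weak neighbourhood of it, and a further subsequence would produce a second cluster point.
\end{itemize}

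The only delicate point is step (ii): the existence of $\ell$ must be derived from the existence of \emph{both} limits $\lim_n\|x_n-x\|$ and $\lim_n\|x_n-y\|$. This is exactly why both cluster points must lie in $C$, and why a single subsequence converging into $C$ does not suffice.
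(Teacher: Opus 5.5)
Your proposal is correct. The counterexample is valid: it needs $\mathscr{H}$ infinite-dimensional, which an orthonormal sequence presupposes, and in finite dimensions the literal statement does hold, since $x_{n_k}\to x$ strongly forces $\lim_n\|x_n-x\|=0$. So the lemma as transcribed in the paper is false, and it needs the hypothesis that every weak sequential cluster point of $\{x_n\}$ lies in $C$, which is what \cite[Lemma 2.47]{bauschke_convex_2017} actually assumes.

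The paper offers no proof beyond that citation, and your argument for the corrected version is the standard proof from that source. Moreover, the proof of Theorem~\ref{th:weakconv} does supply the corrected hypothesis, because its identification of the weak limit applies to an arbitrary weakly convergent subsequence. That identification uses the maximal monotonicity of $A+B$ rather than Lemma~\ref{fiexdlemma}.
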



\section{Algorithms and Their Convergence}\label{S:3}


{In this section we introduce three projection--contraction algorithms with double inertial extrapolation. The basic algorithm \algoO, presented after the standing assumptions below, is studied in the next two subsections (weak convergence, then strong convergence under strong monotonicity). A modified variant \algoT, yielding $R$-linear convergence under strong monotonicity, is introduced in Section~\ref{S:4}. The third algorithm \algoDIM, which couples the projection--contraction step with a three-term Halpern--Mann iteration, is introduced in Section~\ref{S:halpern}.}
Throughout this section, we assume that the following assumptions hold.

\begin{assumption}\label{assumption}
\begin{itemize}
    \item[\textnormal{(A1)}] \label{ass:A1} $A\colon\mathscr{H}\to2^{\mathscr{H}}$ 
    is maximal monotone.
    
    \item[\textnormal{(A2)}] \label{ass:A2} $B\colon\mathscr{H}\to\mathscr{H}$ 
    is $L$-Lipschitz continuous and monotone.
    
    \item[\textnormal{(A3)}] \label{ass:A3} The solution set $\Omega \neq \emptyset$.
   
    \item[\textnormal{(A4)}] \label{ass:A4} $\beta \in \left[0, \frac{1 - 3\alpha}{3(1 - \alpha)}\right)$, and $0 \leq \theta_n \leq \theta_{n+1} \leq 1$.
    
    \item[\textnormal{(A5)}] \label{ass:A5} $0 < \alpha < \frac{1}{3}$.
\end{itemize}
\end{assumption}

\begin{algorithm}[H]\label{A:1}
	\begin{footnotesize}
		\begin{description}
			  \item[Step 0. ]
      Choose 
      $\mu \in (0,1)$,
      $\gamma \in (0,2)$,
      $\lambda_{1} > 0,$ and a nonnegative sequence $\{\xi_{n}\}$ such that $\sum\limits_{n=1}^{\infty} \xi_n<{+}\infty.
      $
      Choose starting points $x_{0}, x_{1} \in \mathscr{H}$ and set $n := 1$.

			\item[Step 1.]  Compute

\begin{equation}\label{defznwnyn}
\begin{cases}
z_n = x_{n}+ \beta(x_n - x_{n-1})  \\
w_{n}=x_{n}+\theta_{n}\left(x_{n}-x_{n-1} \right)\\
y_{n}=J^{A}_{\lambda_{n}}\left(w_n - \lambda_n B(w_n)\right)
\end{cases}
\end {equation}
If $y_{n}=w_{n}$, then stop $y_{n}$ is a solution of problem \eqref{mainproblem}. Otherwise go to \textbf{Step~2}.
\item[Step 2.] Compute $u_{n}=w_{n}-\gamma\eta_{n}d_{n},$ where
\begin{equation}\label{defdn}
 d_{n}:=w_{n}-y_{n}-\lambda_{n}\left(B(w_{n})-B(y_{n})\right),
 \end{equation}
  \textit{and} \begin{equation}\label{defrhon}
            \eta_{n}:=
	\begin{cases}
		\frac{\left\langle w_{n}-y_{n},d_{n}\right\rangle}{\|d_{n}\|^2},& \mbox{if}~ d_{n}\ne0,\\[5pt]
		0 &\mbox{if}~ d_{n}=0.
	\end{cases}
        \end{equation}
                 
                    \item[Step 3.] Compute
                \begin{equation}\label{defxn}
                x_{n+1}=(1-\alpha)z_{n}+\alpha u_{n}.
                \end{equation}

            Update \begin{equation}\label{deflambda}
	\lambda_{n+1}=
	\begin{cases}
		\min\left\{\frac{\mu \|w_{n}-y_{n}\|}{\|B(w_{n})-B(y_{n})\|},
		\lambda_{n}+\xi_{n}\right\} & \mbox{if } B(w_{n})\neq B(y_{n}),\\[5pt]
		\lambda_{n}+\xi_{n} & \mbox{otherwise}.
	\end{cases}
\end{equation}
                Set $n:=n+1$ and go to  \textbf{Step~1}.
		\end{description}
		
		\caption{New Inertial Projection and Contraction Method with Adaptive Step-size (\algoO\!\!)}
       \label{algo:IPCMAS1}
	\end{footnotesize}
\end{algorithm}

\begin{remark}
The proposed algorithm \algoO differs from the algorithm introduced in \eqref{eq:dey_algorithm}
by several essential aspects.

Firstly, algorithm \algoO incorporates an \emph{adaptive stepsize strategy} for the sequence 
$\{\lambda_n\}$, which is updated at each iteration by a simple computation based on the 
previous iterates $w_n$ and $y_n$. 
Unlike the method in \eqref{eq:dey_algorithm}, where the stepsize is required to be chosen 
\emph{a priori} from a fixed interval $(0,1/L)$ depending explicitly on the Lipschitz constant 
$L$, the adaptive rule in \algoO does not require the knowledge of $L$ 
as an input parameter. 
No line search is required.

Secondly, \algoO uses two inertial extrapolation steps through the sequences
$\{z_n\}$ and $\{w_n\}$, so the inertial step and the projection--contraction
correction are decoupled.
This decoupling permits independent control of the acceleration and the stability of the iterates,
in contrast with the single-inertial scheme in \eqref{eq:dey_algorithm}.

Thirdly, the iterative update in \algoO is formulated as a 
\emph{simple convex combination} of the inertial point and the corrected point, 
involving fewer control parameters. 
In contrast, the method in \eqref{eq:dey_algorithm} relies on a more restrictive contraction-type 
update rule with multiple parameters. 
As a result, \algoO has a simpler structure, which simplifies both the analysis
and the implementation.
\end{remark}

{
\begin{remark}[Role of the double inertial structure]\label{remark:double_inertial}
The decoupled double inertial mechanism in Algorithm~\ref{algo:IPCMAS1} is not merely additional momentum: the two extrapolation steps act on different components of the iterate update:
\begin{itemize}
\item The sequence $\{z_n\}$, governed by the parameter $\beta$, provides inertial acceleration in the \emph{averaging step} $x_{n+1} = (1-\alpha)z_n + \alpha u_n$. This term directly influences the final iterate and controls the size of the extrapolation step.
\item The sequence $\{w_n\}$, governed by the parameter $\theta_n$, provides inertial acceleration in the \emph{resolvent evaluation} $y_n = J^A_{\lambda_n}(w_n - \lambda_n B(w_n))$ and the subsequent projection--contraction correction $u_n$.
\end{itemize}
By separating these roles, the parameters $\beta$ and $\theta_n$ can be tuned independently: $\theta_n$ controls the speed of the forward-backward step, while $\beta$ controls the iterate momentum. In the single-inertial framework of~\cite{dey_hybrid_2023}, both roles are handled by a single parameter.
\end{remark}
}

The following lemma shows that $\lambda_n$ in \eqref{deflambda} is well defined.

\begin{lemma}\label{lemma:Lipschitz_lambda_convergence} {\rm\cite{liu_weak_2020}}
The sequence $\{\lambda_{n}\}$ generated by \eqref{deflambda} is well defined and 
$\lim\limits_{n \to \infty} \lambda_n = \lambda$  with  
$\lambda \in \left[ \min\left\{ \lambda_1, \frac{\mu}{L} \right\}, \lambda_1 + \xi \right],$
where $\xi= \sum\limits_{n=1}^{\infty} \xi_n$. Moreover,
\begin{equation}
\|B(w_{n}) - B(y_{n})\| \leq \frac{\mu}{\lambda_{n+1}} \|w_n - y_n\|.
\end{equation}
\end{lemma}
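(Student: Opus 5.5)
The proof is a direct analysis of the recursion \eqref{deflambda}. I will establish three things in turn: the inequality $\|B(w_n)-B(y_n)\|\le \frac{\mu}{\lambda_{n+1}}\|w_n-y_n\|$, a uniform lower bound $\lambda_n\ge\min\{\lambda_1,\mu/L\}$, and convergence of $\{\lambda_n\}$ via an almost-monotonicity argument.

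\emph{Well-definedness and lower bound.} When $B(w_n)\neq B(y_n)$, we must have $w_n\neq y_n$, since $B$ is single-valued. By the $L$-Lipschitz continuity in (A2),
\[
\frac{\mu\|w_n-y_n\|}{\|B(w_n)-B(y_n)\|}\ge \frac{\mu}{L}.
\]
So each $\lambda_{n+1}$ is a finite positive number whenever $\lambda_n$ is. Next I argue by induction. Suppose $\lambda_n\ge\min\{\lambda_1,\mu/L\}$. In the first branch of \eqref{deflambda}, $\lambda_{n+1}$ is the minimum of a quantity at least $\mu/L$ and $\lambda_n+\xi_n\ge\lambda_n$. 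In the second branch, $\lambda_{n+1}=\lambda_n+\xi_n\ge\lambda_n$. In both cases $\lambda_{n+1}\ge\min\{\lambda_1,\mu/L\}$.

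\emph{Upper bound.} Since $\lambda_{n+1}\le\lambda_n+\xi_n$ in every case, telescoping gives $\lambda_{n+1}\le \lambda_1+\sum_{k=1}^{n}\xi_k\le\lambda_1+\xi$.

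\emph{Convergence.} This is the only step that needs care, because $\{\lambda_n\}$ is not monotone. I will use that $(\lambda_{n+1}-\lambda_n)_+\le\xi_n$, which is summable. Write
\[
\lambda_{n+1}-\lambda_1=\sum_{k=1}^{n}(\lambda_{k+1}-\lambda_k)_+-\sum_{k=1}^{n}(\lambda_{k+1}-\lambda_k)_-.
\]
The positive-part series converges, since it is dominated by $\sum\xi_k$. The negative-part partial sums are nondecreasing. They are also bounded above, because $\lambda_{n+1}\ge 0$ and the positive-part series is bounded. So the negative-part series converges as well. Hence $\lambda_n\to\lambda$, and the two bounds pass to the limit to give $\lambda\in[\min\{\lambda_1,\mu/L\},\lambda_1+\xi]$. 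Alternatively, Lemma~\ref{lemma:auxiliaryconvergence} could be applied with $\zeta_n=0$, $\varphi_n=\lambda_n$ and $\delta_n=\xi_n$, which yields the same conclusion directly.

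\emph{Inequality.} If $B(w_n)\neq B(y_n)$, the definition gives $\lambda_{n+1}\le \mu\|w_n-y_n\|/\|B(w_n)-B(y_n)\|$, and rearranging gives the stated bound. If $B(w_n)=B(y_n)$, the left side is zero and the bound is trivial.
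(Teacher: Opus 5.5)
Your proof is correct and follows the standard argument of the cited source (Liu--Yang), which the paper quotes without reproducing: the Lipschitz bound $\mu\|w_n-y_n\|/\|B(w_n)-B(y_n)\|\ge \mu/L$ plus induction gives the lower bound, and telescoping $\lambda_{n+1}\le\lambda_n+\xi_n$ gives the upper bound. Convergence comes from splitting $\lambda_{n+1}-\lambda_n$ into positive and negative parts; the positive part is dominated by the summable $\xi_n$, which forces the negative part to be summable since $\lambda_{n}\ge 0$, and your alternative via Lemma~\ref{lemma:auxiliaryconvergence} with $\zeta_n\equiv 0$ and $\delta_n=\xi_n$ is equally valid.
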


\begin{remark}\label{rm:wnyn}
If $w_{n}=y_{n}$ or $\eta_{n}=0$ in Algorithm \ref{algo:IPCMAS1}, then $y_{n}\in \Omega$.
\end{remark}

It follows from \eqref{defdn} and Lemma \ref{lemma:Lipschitz_lambda_convergence}  that	
\begin{equation}\label{lowerboundofdn}
 		\|d_{n}\|\ge\|w_{n}-y_{n}\|-\lambda_{n}~\|B(w_{n})-B(y_{n})\|
        \ge\left(1-\mu\frac{\lambda_{n}}{\lambda_{n+1}}\right)\|w_{n}-y_{n}\|, 
\end{equation}
and
 
\begin{equation}\label{upperboundofdn}
\|d_{n}\|\le\|w_{n}-y_{n}\|+\lambda_{n}~\|B(w_{n})-B(y_{n})\|
\le\left(1+\mu\frac{\lambda_{n}}{\lambda_{n+1}}\right)\|w_{n}-y_{n}\|.
\end{equation}
From \eqref{lowerboundofdn} and \eqref{upperboundofdn}, we get 
\begin{equation}
 	\left(1-\mu\frac{\lambda_{n}}{\lambda_{n+1}}\right)\|w_{n}-y_{n}\|\le \|d_{n}\|\le\left(1+\mu\frac{\lambda_{n}}{\lambda_{n+1}}\right)\|w_{n}-y_{n}\|.
\end{equation}
Hence, $w_{n}=y_{n}$ if and only if  $d_{n}=0.$ 
If $w_{n}=y_{n}$, then $y_{n}=J^{A}_{\lambda_{n}}\left(w_n - \lambda_n B(w_n)\right)$. 
Thus, by Lemma \ref{fiexdlemma}, we have that $y_{n}\in\Omega.$

\subsection{Weak Convergence}

In this subsection, we prove the weak convergence of the sequence $\{x_{n}\}$ 
generated by Algorithm \ref{algo:IPCMAS1} under Assumption \ref{assumption}.

\begin{lemma}\label{lemma:boundedness}
Let $\{x_{n}\}$ be a sequence generated by Algorithm \ref{algo:IPCMAS1} and $x^\ast\in \Omega$. 
Then the sequence $\{x_{n}\}$ is bounded and $\lim\limits_{n \to \infty} \|x_n - x^\ast\|$ exists.
\end{lemma}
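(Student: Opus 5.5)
The plan is the standard projection--contraction estimate for $u_n$, followed by an inertial recursion for $\|x_n-x^\ast\|^2$. Lemma~\ref{lemma:auxiliaryconvergence} cannot be applied directly because $\beta$ is a constant, not summable; instead we build a Lyapunov functional that absorbs the $\|x_n-x_{n-1}\|^2$ terms.

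\textbf{Step 1 (contraction estimate).} Since $x^\ast\in\Omega$, we have $-\lambda_nB(x^\ast)\in\lambda_nA(x^\ast)$. From $y_n=J^A_{\lambda_n}(w_n-\lambda_nB(w_n))$ we get $w_n-y_n-\lambda_nB(w_n)\in\lambda_nA(y_n)$. Monotonicity of $A$ and of $B$ then gives $\langle d_n, y_n-x^\ast\rangle\ge 0$. Using the definition of $\eta_n$, the standard computation yields
\[
\|u_n-x^\ast\|^2\le \|w_n-x^\ast\|^2-\frac{2-\gamma}{\gamma}\|u_n-w_n\|^2 .
\]
This follows from $\|u_n-x^\ast\|^2=\|w_n-x^\ast\|^2-2\gamma\eta_n\langle d_n,w_n-x^\ast\rangle+\gamma^2\eta_n^2\|d_n\|^2$, together with $\langle d_n,w_n-x^\ast\rangle\ge\langle d_n,w_n-y_n\rangle=\eta_n\|d_n\|^2$. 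In particular $\|u_n-x^\ast\|\le\|w_n-x^\ast\|$.

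\textbf{Step 2 (expanding the iterates).} By Lemma~\ref{lemma:inner_product_identity}(ii), applied to $x_{n+1}=(1-\alpha)z_n+\alpha u_n$,
\[
\|x_{n+1}-x^\ast\|^2=(1-\alpha)\|z_n-x^\ast\|^2+\alpha\|u_n-x^\ast\|^2-\alpha(1-\alpha)\|z_n-u_n\|^2 .
\]
Expand $\|z_n-x^\ast\|^2$ and $\|w_n-x^\ast\|^2$ with the same identity, written for $x_n+t(x_n-x_{n-1})$:
\[
\|x_n+t(x_n-x_{n-1})-x^\ast\|^2=(1+t)\|x_n-x^\ast\|^2-t\|x_{n-1}-x^\ast\|^2+t(1+t)\|x_n-x_{n-1}\|^2 .
\]
With $\varphi_n:=\|x_n-x^\ast\|^2$ and $\rho_n:=(1-\alpha)\beta+\alpha\theta_n$, this gives
\[
\varphi_{n+1}\le(1+\rho_n)\varphi_n-\rho_n\varphi_{n-1}+c\|x_n-x_{n-1}\|^2-\alpha(1-\alpha)\|z_n-u_n\|^2-\tfrac{\alpha(2-\gamma)}{\gamma}\|u_n-w_n\|^2 ,
\]
where $c=(1-\alpha)\beta(1+\beta)+2\alpha$ (using $\theta_n\le1$).

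\textbf{Step 3 (main obstacle: controlling the momentum term).} The difficult step is to show that the term $c\|x_n-x_{n-1}\|^2$ is dominated by the negative terms. I would lower-bound $\|x_{n+1}-x_n\|^2$ using
\[
x_{n+1}-x_n=\beta(x_n-x_{n-1})+\alpha(u_n-z_n),
\]
so that $\alpha^2\|u_n-z_n\|^2\ge(1-\epsilon)\|x_{n+1}-x_n\|^2-(\epsilon^{-1}-1)\beta^2\|x_n-x_{n-1}\|^2$. This turns $-\alpha(1-\alpha)\|z_n-u_n\|^2$ into roughly $-\tfrac{1-\alpha}{\alpha}\|x_{n+1}-x_n\|^2$ plus a multiple of $\|x_n-x_{n-1}\|^2$. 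Then define
\[
\Gamma_n:=\varphi_n-\rho_{n}\varphi_{n-1}+\kappa\|x_n-x_{n-1}\|^2 .
\]
The monotonicity $\theta_n\le\theta_{n+1}$ gives $\rho_n\le\rho_{n+1}$, and this is exactly what lets the telescoping go through. The aim is $\Gamma_{n+1}\le\Gamma_n-\delta\|x_{n+1}-x_n\|^2$ for some $\delta>0$, where the bounds $\beta<\frac{1-3\alpha}{3(1-\alpha)}$ and $\alpha<1/3$ are what should make $\delta>0$. If this exact choice does not close, the fallback is to keep $\kappa$ free and tune it against those two bounds.

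\textbf{Step 4 (conclusion).} Since $\Gamma_n$ is nonincreasing and $\rho_n\le\rho<1$, we get $\varphi_n\le\rho\varphi_{n-1}+\Gamma_1$. Iterating this shows $\{\varphi_n\}$ is bounded, so $\{x_n\}$ is bounded; this also gives $\Gamma_n\ge-\rho\varphi_{n-1}$, so $\Gamma_n$ is bounded below. Summing the descent inequality gives $\sum\|x_{n+1}-x_n\|^2<\infty$. The recursion of Step 2 then has the form of Lemma~\ref{lemma:auxiliaryconvergence}, with $\zeta_n=\rho_n\le\rho<1$ and summable $\delta_n=c\|x_n-x_{n-1}\|^2$. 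Hence $\lim_{n\to\infty}\|x_n-x^\ast\|$ exists.
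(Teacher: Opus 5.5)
Your route is the paper's route. The paper's Lyapunov quantity $\Delta_n=\varphi_n-\rho_n\varphi_{n-1}+\bigl[2\beta(1-\alpha)+\alpha\theta_n(1+\theta_n)\bigr]\|x_n-x_{n-1}\|^2$ is your $\Gamma_n$, with an $n$-dependent coefficient where you use a constant $\kappa$. The condition $\theta_n\le\theta_{n+1}$ is used in the same way to telescope. The boundedness argument via $\varphi_n\le\rho\varphi_{n-1}+\Gamma_1$, the summability of $\|x_{n+1}-x_n\|^2$, and the final appeal to Lemma~\ref{lemma:auxiliaryconvergence} also match the paper.

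What is missing is the verification that $\delta>0$ in Step~3. This is the only place where (A4)--(A5) enter, so as written the argument stops exactly at its decisive step. It does close, without the fallback.

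Write $a_n:=x_{n+1}-x_n$, $b_n:=x_n-x_{n-1}$ and take $\epsilon=\beta$, i.e.\ use $2\beta\langle a_n,b_n\rangle\le\beta\|a_n\|^2+\beta\|b_n\|^2$ (this is also valid for $\beta=0$). This gives $\alpha^2\|u_n-z_n\|^2\ge(1-\beta)\|a_n\|^2-\beta(1-\beta)\|b_n\|^2$, and Step~2 becomes
\[
\varphi_{n+1}\le(1+\rho_n)\varphi_n-\rho_n\varphi_{n-1}+C\|b_n\|^2-D\|a_n\|^2,
\qquad
D=\frac{(1-\alpha)(1-\beta)}{\alpha},\quad C=c+\frac{(1-\alpha)\beta(1-\beta)}{\alpha}.
\]
With $\kappa:=C$ and $\rho_n\le\rho_{n+1}$ you get $\Gamma_{n+1}\le\Gamma_n-(D-C)\|a_n\|^2$. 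The condition $D-C>0$ is equivalent to
\[
(1-\beta)^2>\alpha\beta(1+\beta)+\frac{2\alpha^2}{1-\alpha}.
\]
Since (A4) is equivalent to $\frac{2\alpha}{1-\alpha}<1-3\beta$, the right-hand side is smaller than $\alpha\beta(1+\beta)+\alpha(1-3\beta)=\alpha(1-\beta)^2<(1-\beta)^2$.

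Note what your bookkeeping buys. You keep the exact factor $\alpha^{-2}$ in $\|u_n-z_n\|^2=\alpha^{-2}\|a_n-\beta b_n\|^2$, so your descent term is $-\frac{(1-\alpha)(1-\beta)}{\alpha}\|a_n\|^2$. The paper's \eqref{eq:11} rescales to $\alpha^{-1}$ and retains only $-(1-\alpha)(1-\beta)\|a_n\|^2$. As a result, your requirement on $(\alpha,\beta)$ is strictly weaker than (A4): for example, $\alpha=\beta=0.3$ satisfies it but violates (A4).

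One further point is shared by your Step~1 and the paper. Replacing $\langle d_n,w_n-x^\ast\rangle$ by the smaller $\langle d_n,w_n-y_n\rangle$ requires $\eta_n\ge0$. By \eqref{eq:26}, this is guaranteed only when $\mu\lambda_n/\lambda_{n+1}\le 1$. That holds for all $n\ge N$ because $\lambda_n\to\lambda>0$, but it can fail for small $n$ if $\lambda_1$ is large. Both conclusions are tail statements, so simply start the Lyapunov argument at $n=N$.
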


\begin{proof}
It follows from the definitions of $y_{n}$ and $J^{A}_{\lambda_{n}}$ that 
$w_{n}-\lambda_{n}B(w_{n})\in (I+\lambda_{n}A)(y_{n})$, which implies that
\begin{equation}\label{eq:0.0}
    w_{n}-y_{n}-\lambda_{n}B(w_{n})\in \lambda_{n} A(y_{n}).
\end{equation}

Since $x^\ast\in\Omega$, we have $
-\lambda_{n}B(x^\ast)\in \lambda_{n} A(x^\ast).$
Thus, using the monotonicity of $A$, we get  
\begin{equation}\label{eq:01}
\left\langle y_n - x^\ast,\ w_n - y_n - \lambda_n(B(w_{n}) - B(x^\ast)) \right\rangle \geq 0.
\end{equation}
Furthermore, by the monotonicity of $B$ and the fact that $\lambda_n > 0$, we obtain
\begin{equation}\label{eq:02}
\left\langle y_n - x^\ast,\ \lambda_n\left(B(y_{n}) - B(x^\ast)\right) \right\rangle \geq 0.
\end{equation}
Adding inequalities \eqref{eq:01} and \eqref{eq:02}, we get  
\begin{equation}\label{eq:2.2}
   \left\langle y_n - x^\ast,\ d_{n} \right\rangle 
= \left\langle y_n - x^\ast,\ w_n - y_n - \lambda_n\left(B(w_n) - B(y_{n})\right)\right\rangle 
\geq 0, 
\end{equation}
that is, 
\begin{equation}\label{eq:03}
    \left\langle y_n - x^\ast,\ d_{n} \right\rangle\geq0.
\end{equation}
By Lemma \ref{lemma:inner_product_identity} (i), we get
\begin{align}\label{eq;04}
\| u_n - x^\ast \|^2 &\nonumber= \| (w_n - x^\ast) -\gamma\eta_n d_n \|^2 \\
&= \| w_n - x^\ast \|^2 - 2 \gamma\eta_{n} \left\langle w_n - x^\ast,\ d_n \right\rangle 
+ \gamma^2 \eta_n^2 \| d_n \|^2.
\end{align}
Using \eqref{eq:03} and \eqref{eq;04} and taking into account 
$\eta_n = \frac{\left\langle w_n - y_n,\ d_n \right\rangle}{\| d_n \|^2}$, we obtain
\begin{align}\label{eq:05}
\| u_n - x^\ast \|^2 &\nonumber\leq \| w_n - x^\ast \|^2 - 2\gamma \eta_n 
\left\langle w_n - y_n,\ d_n \right\rangle + \gamma^2 \eta_n^2 \| d_n \|^2\\\nonumber
&= \| w_n - x^\ast \|^2 - 2\gamma\eta_n \left\langle w_n - y_n,\ d_n \right\rangle 
+ \gamma^2 \eta_n \left\langle w_n - y_n,\ d_n \right\rangle\\
&= \| w_n - x^\ast \|^2 - \gamma(2-\gamma) \eta_n \left\langle w_n - y_n,\ d_n \right\rangle.
\end{align}
On the other hand, the definition of $u_{n}$ and $d_{n}$ implies that
\begin{align}\label{eq:06}
\eta_n \left\langle w_n - y_n,\ d_n \right\rangle = \| \eta_n d_n \|^2 
= \frac{1}{\gamma^2} \| u_n - w_n \|^2.
\end{align}
On combining \eqref{eq:05} and \eqref{eq:06}, we get 
\begin{equation}\label{eq:007}
    \| u_n - x^\ast \|^2 \leq \| w_n - x^\ast \|^2 - \frac{(2-\gamma)}{\gamma} \| u_n - w_n \|^2,
\end{equation}
and thus,
\begin{equation}\label{eq:07}
    \| u_n - x^\ast \|^2 \leq \| w_n - x^\ast \|^2.
\end{equation}
From \eqref{defxn} and \eqref{eq:07}, we get

\begin{align}
\| x_{n+1} - x^\ast \|^2
&= \| (1 - \alpha)(z_n - x^\ast) + \alpha (u_n - x^\ast) \|^2 \nonumber\\
&= (1 - \alpha) \| z_n - x^\ast \|^2
+ \alpha \| u_n - x^\ast \|^2
- \alpha (1 - \alpha) \| z_n - u_n \|^2 \label{eq:08a} \\
&\leq (1 - \alpha) \| z_n - x^\ast \|^2
+ \alpha \| w_n - x^\ast \|^2
- \alpha (1 - \alpha) \| z_n - u_n \|^2. \label{eq:08}
\end{align}
From  Lemma \ref{lemma:inner_product_identity} (ii)  and \eqref{defznwnyn}, we get
\begin{align}\label{eq:09}
\|z_n - x^\ast\|^2 &= \|x_n + \beta(x_n - x_{n-1}) - x^\ast\|^2 \nonumber \\
&= \|(1 + \beta)(x_n - x^\ast) - \beta(x_{n-1} - x^\ast)\|^2 \nonumber \\
&= (1 + \beta)\|x_n - x^\ast\|^2 - \beta\|x_{n-1} - x^\ast\|^2 + \beta(1 + \beta)\|x_n - x_{n-1}\|^2.
\end{align}
Similarly, 
\begin{align}\label{eq:10}
\|w_n - x^\ast\|^2 &= \|(1 + \theta_n)(x_n - x^\ast) - \theta_n(x_{n-1} - x^\ast)\|^2 \nonumber \\
&= (1 + \theta_n)\|x_n - x^\ast\|^2 - \theta_n\|x_{n-1} - x^\ast\|^2 
+ \theta_n(1 + \theta_n)\|x_n - x_{n-1}\|^2 .
\end{align}
Since $ x_{n+1} = (1 - \alpha)z_n + \alpha u_n$, we have
\begin{align}\label{eq:11}
\|u_n - z_n\|^2 &= \frac{1}{\alpha^2} \|x_{n+1} - z_n\|^2 \nonumber \\
&= \frac{1}{\alpha^2} \|x_{n+1} - x_n - \beta(x_n - x_{n-1})\|^2 \nonumber \\
&= \frac{1}{\alpha^2} \left[ \|x_{n+1} - x_n\|^2 + \beta^2 \|x_n - x_{n-1}\|^2 
- 2\beta \langle x_{n+1} - x_n, x_n - x_{n-1} \rangle \right] \nonumber \\
&\geq \frac{1 - \beta}{\alpha} \|x_{n+1} - x_n\|^2 
- \frac{\beta(1 - \beta)}{\alpha} \|x_n - x_{n-1}\|^2 .
\end{align}
Substituting \eqref{eq:09}, \eqref{eq:10} and \eqref{eq:11} into \eqref{eq:08}, we get 
\begin{align}\label{eq:12}
\|x_{n+1} - x^\ast\|^2 
&\leq (1 - \alpha)(1 + \beta)\|x_n - x^\ast\|^2 - (1 - \alpha)\beta\|x_{n-1} - x^\ast\|^2 \nonumber \\
&\quad + (1 - \alpha)\beta(1 + \beta)\|x_n - x_{n-1}\|^2 
 + \alpha(1 + \theta_n)\|x_n - x^\ast\|^2 - \alpha \theta_n\|x_{n-1} - x^\ast\|^2 \nonumber \\
&\quad + \alpha \theta_n(1 + \theta_n)\|x_n - x_{n-1}\|^2  - (1 - \alpha)(1 - \beta)
\|x_{n+1} - x_n\|^2 + (1 - \alpha)\beta(1 - \beta)\|x_n - x_{n-1}\|^2 \nonumber \\
&= \left[1 + (1 - \alpha)\beta + \alpha \theta_n\right]\|x_n - x^\ast\|^2 - \left[(1 - \alpha)\beta 
+ \alpha \theta_n\right]\|x_{n-1} - x^\ast\|^2 \nonumber \\
&\quad + \left[2\beta(1 - \alpha) + \alpha \theta_n(1 + \theta_n)\right]\|x_n - x_{n-1}\|^2
- (1 - \alpha)(1 - \beta)\|x_{n+1} - x_n\|^2 .
\end{align}
Define
\begin{align*}
\Delta_n :=& \|x_n - x^\ast\|^2 - \left[(1 - \alpha)\beta + \alpha \theta_n\right] 
\|x_{n-1} - x^\ast\|^2  
+ \left[2\beta(1 - \alpha) + \alpha \theta_n(1 + \theta_n)\right] \|x_n - x_{n-1}\|^2.
\end{align*}
Since  $\alpha > 0$ and \ $\theta_{n} \leq \theta_{n+1}$, 
$\alpha \theta_n \leq \alpha \theta_{n+1}$. Thus,
\begin{align}\label{eq:13}
\Delta_{n+1} - \Delta_n 
&= \|x_{n+1} - x^\ast\|^2 - \left[(1 - \alpha)\beta + \alpha \theta_{n+1}\right] \|x_n - x^\ast\|^2 \nonumber \\
&\quad + \left[2\beta(1 - \alpha) + \alpha \theta_{n+1}(1 + \theta_{n+1})\right] \|x_{n+1} - x_n\|^2 \nonumber \\
&\quad - \|x_n - x^\ast\|^2 + \left[(1 - \alpha)\beta + \alpha \theta_n\right] \|x_{n-1} - x^\ast\|^2 \nonumber \\
&\quad - \left[2\beta(1 - \alpha) + \alpha \theta_n(1 + \theta_n)\right] \|x_n - x_{n-1}\|^2 \nonumber \\
&\leq \|x_{n+1} - x^\ast\|^2 - \left[1 + (1 - \alpha)\beta + \alpha \theta_{n+1}\right] \|x_n - x^\ast\|^2 \nonumber \\
&\quad + \left[(1 - \alpha)\beta + \alpha \theta_n\right] \|x_{n-1} - x^\ast\|^2 \nonumber \\
&\quad - \left[2\beta(1 - \alpha) + \alpha \theta_n(1 + \theta_n)\right] \|x_n - x_{n-1}\|^2 \nonumber \\
&\quad + \left[2\beta(1 - \alpha) + \alpha \theta_{n+1}(1 + \theta_{n+1})\right] \|x_{n+1} - x_n\|^2  \nonumber \\
&\leq \|x_{n+1} - x^\ast\|^2 - \left[1 + (1 - \alpha)\beta + \alpha \theta_{n}\right] \|x_n - x^\ast\|^2 \nonumber \\
&\quad + \left[(1 - \alpha)\beta + \alpha \theta_n\right] \|x_{n-1} - x^\ast\|^2 \nonumber \\
&\quad - \left[2\beta(1 - \alpha) + \alpha \theta_n(1 + \theta_n)\right] \|x_n - x_{n-1}\|^2 \nonumber \\
&\quad + \left[2\beta(1 - \alpha) + \alpha \theta_{n+1}(1 + \theta_{n+1})\right] \|x_{n+1} - x_n\|^2 .
\end{align}
On combining \eqref{eq:12} and \eqref{eq:13}, we get 
\begin{align}\label{eq:14}
\Delta_{n+1} - \Delta_n 
&\leq - (1 - \alpha)(1 - \beta)\|x_{n+1} - x_n\|^2 
+ \left[2\beta(1 - \alpha) + \alpha \theta_{n+1}(1 + \theta_{n+1})\right] \|x_{n+1} - x_n\|^2 \nonumber \\
&= -\left[(1 - \alpha)(1 - \beta) - 2\beta(1 - \alpha) - \alpha \theta_{n+1}(1 + \theta_{n+1})\right] \|x_{n+1} - x_n\|^2 .
\end{align}
From Assumption~\ref{assumption} (A4) and (A5), it follows that:
\begin{align}\label{eq:15}
(1 - \alpha)(1 - \beta) - 2\beta(1 - \alpha) - \alpha \theta_{n+1}(1 + \theta_{n+1}) \nonumber 
&= (1 - \alpha)\left[(1 - \beta) - 2\beta\right] - \alpha \theta_{n+1}(1 + \theta_{n+1}) \nonumber \\
&= (1 - \alpha)(1 - 3\beta) - \alpha \theta_{n+1}(1 + \theta_{n+1}) \nonumber \\
&\geq (1 - \alpha)(1 - 3\beta) - 2\alpha\nonumber\\ &> 0.
\end{align}
Thus, from \eqref{eq:14} and \eqref{eq:15}, we have
\begin{align}\label{eq:15.1}
\Delta_{n+1} - \Delta_n 
&\leq -\left[(1 - \alpha)(1 - 3\beta) - 2\alpha\right] \|x_{n+1} - x_n\|^2, 
\end{align}
and therefore, $\{\Delta_n\}$ is non-increasing.
Similarly, we observe that
\begin{align}\label{eq:16}
\Delta_n &= \|x_n - x^\ast\|^2 - \left[(1 - \alpha)\beta + \alpha \theta_n \right] \|x_{n-1} - x^\ast\|^2 
+ \left[2\beta(1 - \alpha) + \alpha \theta_n(1 + \theta_n)\right] \|x_n - x_{n-1}\|^2 \nonumber \\
&\geq \|x_n - x^\ast\|^2 - \left[(1 - \alpha)\beta + \alpha \theta_n \right] \|x_{n-1} - x^\ast\|^2 .
\end{align}
Thus, rearranging gives
\begin{align}\label{eq:17}
\|x_n - x^\ast\|^2 
&\leq \left[(1 - \alpha)\beta + \alpha \theta_n \right] \|x_{n-1} - x^\ast\|^2 + \Delta_n \nonumber \\
&\leq \left[(1 - \alpha)\beta + \alpha \right] \|x_{n-1} - x^\ast\|^2 + \Delta_n \nonumber \\
&\leq \delta \|x_{n-1} - x^\ast\|^2 + \Delta_1 \nonumber \\
&\leq \delta^2 \|x_{n-2} - x^\ast\|^2 + \delta \Delta_1 + \Delta_1 \nonumber \\
&\ \ \vdots \nonumber \\
&\leq \delta^n \|x_0 - x^\ast\|^2 + \Delta_1 \sum_{k=0}^{n-1} \delta^k \nonumber\\
&\leq \delta^n \|x_0 - x^\ast\|^2 + \frac{\Delta_1}{1 - \delta}, 
\end{align}
where \( \delta := (1 - \alpha)\beta + \alpha \in [0,1) \) by Assumption~\ref{assumption} (A4) and (A5). Hence, $\{\|x_n - x^\ast\|\}$ is bounded and so is $\{x_{n}\}$. Moreover, from the definition of $\Delta_{n}$, we have
\begin{align}
\Delta_{n+1} &= \|x_{n+1} - x^\ast\|^2 - \left[(1 - \alpha)\beta + \alpha \theta_{n+1} \right] \|x_n - x^\ast\|^2 
+ \left[2\beta(1 - \alpha) + \alpha \theta_{n+1}(1 + \theta_{n+1})\right] \|x_{n+1} - x_n\|^2 \nonumber \\
&\geq -\left[(1 - \alpha)\beta + \alpha \theta_{n+1} \right] \|x_n - x^\ast\|^2.\nonumber
\end{align}
By \eqref{eq:17}, it follows that
\begin{align}\label{eq:18}
- \Delta_{n+1} &\leq \left[(1 - \alpha)\beta + \alpha \theta_{n+1} \right] \|x_n - x^\ast\|^2 \nonumber\\&
\leq \left[(1 - \alpha)\beta + \alpha \right] \|x_n - x^\ast\|^2 \nonumber\\&
\leq \delta \|x_n - x^\ast\|^2\nonumber \\&
\leq \delta^{n+1} \|x_0 - x^\ast\|^2 + \frac{\delta \Delta_1}{1 - \delta}. 
\end{align}
From \eqref{eq:15.1} and \eqref{eq:18}, we obtain
\begin{align}\label{eq:19}
\left[(1 - \alpha)(1 - 3\beta) - 2\alpha \right] \sum_{n=1}^k \|x_{n+1} - x_n\|^2 
&\leq \Delta_1 - \Delta_{k+1} \nonumber \\
&\leq \delta^{k+1} \|x_0 - x^\ast\|^2 + \frac{\Delta_1}{1 - \delta},
\end{align}
which implies
\begin{align}\label{eq:19.1}
\sum_{n=1}^\infty \|x_{n+1} - x_n\|^2 
&\leq \frac{\Delta_1}{\left[(1 - \alpha)(1 - 3\beta) - 2\alpha\right](1 - \delta)} < +\infty.
\end{align}
Therefore,
$
\lim\limits_{n \to \infty} \|x_{n+1} - x_n\| = 0.
$
Hence, we obtain from \eqref{defznwnyn} that
\begin{equation}\label{eq:20}
    \lim_{n \to \infty} \|z_n - x_n\| = \lim_{n \to \infty} \|w_n - x_n\| = 0.
\end{equation}
By \eqref{eq:12}, we have
\begin{align}\label{eq:21}
\|x_{n+1} - x^\ast\|^2 
&\leq \left[1 + (1 - \alpha)\beta + \alpha\theta_n\right] \|x_n - x^\ast\|^2 - \left[(1 - \alpha)\beta + \alpha\theta_n\right] \|x_{n-1} - x^\ast\|^2 \nonumber \\
&\quad + \left[2\beta(1 - \alpha) + \alpha\theta_n(1 + \theta_n)\right] \|x_n - x_{n-1}\|^2 \nonumber \\
&\leq \left[1 + (1 - \alpha)\beta + \alpha\theta_n\right] \|x_n - x^\ast\|^2 - \left[(1 - \alpha)\beta + \alpha\theta_n\right] \|x_{n-1} - x^\ast\|^2 \nonumber \\
&\quad + \left(2\beta(1 - \alpha) + 2\alpha\right) \|x_n - x_{n-1}\|^2 \nonumber \\
&= \|x_n - x^\ast\|^2 + \left[(1 - \alpha)\beta + \alpha\theta_n\right]\left(\|x_n - x^\ast\|^2 - \|x_{n-1} - x^\ast\|^2\right) \nonumber \\
&\quad + \left(2\beta(1 - \alpha) + 2\alpha\right) \|x_n - x_{n-1}\|^2. 
\end{align}
Since \( \theta_n \leq 1 < \frac{1 - (1 - \alpha)\beta}{\alpha} \), we have
\(
0 \leq (1 - \alpha)\beta + \alpha\theta_n < 1.
\)
Thus from Lemma \ref{lemma:auxiliaryconvergence} and   \eqref{eq:21}, we get
\begin{equation}\label{eq:22}
\lim_{n \to \infty} \|x_n - x^\ast\| = \ell < {+}\infty. 
\end{equation}
This completes the proof.
\end{proof}

\begin{theorem}\label{th:weakconv}
Let $\{x_{n}\}$  be the sequence generated by Algorithm \ref{algo:IPCMAS1}. 
Suppose that Assumption \ref{assumption} {\rm (A1)-(A5)} hold. 
Then the sequence $\{x_{n}\}$ converges weakly to a point in $\Omega.$
\end{theorem}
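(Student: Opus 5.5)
The plan is to apply Opial's lemma (Lemma~\ref{opialcondition}) with $C=\Omega$. Lemma~\ref{lemma:boundedness} already shows that $\lim_{n\to\infty}\|x_n-x^\ast\|$ exists for every $x^\ast\in\Omega$. It therefore remains to prove that every weak cluster point of $\{x_n\}$ lies in $\Omega$. Since $\{x_n\}$ is bounded, there is a subsequence $x_{n_k}\rightharpoonup p$. By \eqref{eq:20} we then also have $w_{n_k}\rightharpoonup p$, so it suffices to show $p\in\Omega$.

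\textbf{Step 1: $\|w_n-y_n\|\to 0$.}
From \eqref{eq:08a} and \eqref{eq:007} we get
\[
\|x_{n+1}-x^\ast\|^2\le(1-\alpha)\|z_n-x^\ast\|^2+\alpha\|w_n-x^\ast\|^2-\alpha\tfrac{2-\gamma}{\gamma}\|u_n-w_n\|^2 .
\]
By \eqref{eq:09}, \eqref{eq:10}, the convergence of $\|x_n-x^\ast\|$ and $\|x_n-x_{n-1}\|\to0$, both $\|z_n-x^\ast\|^2$ and $\|w_n-x^\ast\|^2$ tend to $\ell^2$. For $\|w_n-x^\ast\|^2$ this uses the boundedness of $\theta_n$: the difference $\theta_n(\|x_n-x^\ast\|^2-\|x_{n-1}-x^\ast\|^2)$ tends to $0$. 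Hence $\|u_n-w_n\|\to0$.

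Next, \eqref{eq:06} gives $\|u_n-w_n\|^2=\gamma^2\eta_n\langle w_n-y_n,d_n\rangle$. We bound this from below. By Lemma~\ref{lemma:Lipschitz_lambda_convergence},
\[
\langle w_n-y_n,d_n\rangle\ge\Bigl(1-\mu\tfrac{\lambda_n}{\lambda_{n+1}}\Bigr)\|w_n-y_n\|^2 .
\]
Combining this with \eqref{upperboundofdn} yields
\[
\eta_n\ge\frac{1-\mu\lambda_n/\lambda_{n+1}}{(1+\mu\lambda_n/\lambda_{n+1})^2}.
\]
Since $\lambda_n/\lambda_{n+1}\to1$, the right-hand side is eventually at least $\frac{1-\mu}{2(1+\mu)^2}>0$. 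Consequently $\|u_n-w_n\|^2\ge c\,\|w_n-y_n\|^2$ for some $c>0$ and all large $n$. This gives $\|w_n-y_n\|\to0$, and hence $y_{n_k}\rightharpoonup p$ as well.

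\textbf{Step 2: $p\in\Omega$ via maximal monotonicity.}
This is the main obstacle, because we only have weak convergence and cannot pass to the limit inside the resolvent. By Lemma~\ref{lemma:sum_maximal_monotone}, $A+B$ is maximal monotone. From \eqref{eq:0.0},
\[
v_n:=\frac{w_n-y_n}{\lambda_n}-B(w_n)+B(y_n)\in (A+B)(y_n).
\]
Since $\lambda_n\to\lambda>0$, $\|w_n-y_n\|\to0$, and $B$ is $L$-Lipschitz, we have $v_n\to0$ strongly.

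Now take any $(x,v)\in\operatorname{Graph}(A+B)$. Monotonicity gives $\langle v-v_{n_k},x-y_{n_k}\rangle\ge0$. Passing to the limit uses the pairing of the strongly convergent sequence $v_{n_k}\to0$ with the bounded sequence $x-y_{n_k}$, together with $y_{n_k}\rightharpoonup p$. This yields $\langle v-0,x-p\rangle\ge0$. Maximality of $A+B$ then forces $0\in(A+B)(p)$, that is, $p\in\Omega$.

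Lemma~\ref{opialcondition} now concludes that $\{x_n\}$ converges weakly to a point of $\Omega$. The delicate points are the uniform lower bound on $\eta_n$, which relies on the limit of $\lambda_n$ from Lemma~\ref{lemma:Lipschitz_lambda_convergence}, and the demiclosedness argument above.
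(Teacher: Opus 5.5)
Your proof is correct and follows essentially the same route as the paper: Opial's lemma with the limits from Lemma~\ref{lemma:boundedness}, then $\|w_n-y_n\|\to 0$ via a lower bound on $\|u_n-w_n\|$ in terms of $\|w_n-y_n\|$, then weak--strong closedness of the graph of the maximal monotone operator $A+B$. Your element $v_n\in(A+B)(y_n)$ is exactly what the paper's computation, using monotonicity of $A$ and $B$ separately, produces. The only local difference is that you get $\|u_n-w_n\|\to 0$ from the energy inequality obtained by combining \eqref{eq:08a} and \eqref{eq:007}, whereas the paper uses the identity $u_n-z_n=\frac{1}{\alpha}(x_{n+1}-z_n)$ together with $\|x_{n+1}-x_n\|\to 0$; both are valid.
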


\begin{proof}
From \eqref{defznwnyn} and \eqref{defxn}, we have
\begin{align}\label{eq:23}
\|u_n - z_n\| 
&= \frac{1}{\alpha} \|x_{n+1} - x_n - \beta(x_n - x_{n-1})\| \notag\\
&\leq \frac{1}{\alpha} \|x_{n+1} - x_n\| + \frac{\beta}{\alpha} \|x_n - x_{n-1}\|.
\end{align}
Since $\lim\limits_{n\to\infty}\|x_{n+1}-x_{n}\|=0$, we have $\lim\limits_{n\to\infty} \|u_n - z_n\|=0$.
On the other hand, from the triangle inequality, we have
$$
\|u_n - x_n\| \leq \|u_n - z_n\| + \|z_n - x_n\| ,
$$
$$
\|w_n - u_n\| \leq \|w_n - x_n\| + \|x_n - u_n\|.
$$
The above two inequalities together with \eqref{eq:20} implies that
\begin{equation}\label{eq:24}
    \lim_{n \to \infty} \|u_n - x_n\| = 0, \quad \mbox{and} 
    \quad \lim_{n \to \infty} \|w_n - u_n\| = 0.
\end{equation}
From the definition of $d_{n}$ in \eqref{defdn}, we get
\begin{align*}
\|d_n\| &= \|w_n - y_n - \lambda_n(B(w_{n}) - B(y_{n}))\| \\
&\leq \|w_n - y_n\| + \lambda_n \|B(w_{n}) - B(y_{n})\| \\
&\leq \left(1 +\mu\frac{ \lambda_n }{ \lambda_{n+1}}\right) \|w_n - y_n\|,
\end{align*}
which implies that
\begin{equation}\label{eq:25}
    \frac{1}{\|d_n\|} \geq \frac{1}{\left(1 +\mu\frac{ \lambda_n }{ \lambda_{n+1}}\right)~\|w_n - y_n\|}.
\end{equation}
On the other hand, the definition of $d_{n}$ in \eqref{defdn} yields that
\begin{align}\label{eq:26}
\langle w_n - y_n, d_n \rangle 
&= \langle w_n - y_n, w_n - y_n - \lambda_n(B(w_{n}) - B(y_{n})) \rangle \notag\\
&= \|w_n - y_n\|^2 - \langle w_n - y_n, \lambda_n(B(w_{n}) - B(y_{n})) \rangle \notag\\
&\geq \|w_n - y_n\|^2 - \lambda_n \|B(w_{n}) - B(y_{n})\| \cdot \|w_n - y_n\| \notag\\
&\geq \|w_n - y_n\|^2 - \mu\frac{\lambda_n }{\lambda_{n+1}} \|w_n - y_n\|^2 \notag\\
&= \left(1 -\mu \frac{\lambda_n }{\lambda_{n+1}}\right) \|w_n - y_n\|^2.
\end{align}
From the definitions of $u_n$ and $\eta_n$, together with \eqref{eq:25} and \eqref{eq:26}, 
we obtain
\begin{align}\label{eq:27}
\|u_n - w_n\| 
&= \gamma \eta_n \|d_n\| = \gamma \frac{\langle w_n - y_n, d_n \rangle}{\|d_n\|} \notag\\
&\geq \gamma\,
\frac{\left(1 - \mu\frac{\lambda_n }{\lambda_{n+1}}\right)}{
\left(1 + \mu\frac{\lambda_n}{\lambda_{n+1}}\right)} \|w_n - y_n\|.
\end{align}
Thus, by using \eqref{eq:24}, \eqref{eq:27}, and taking into account that
$\lim_{n\to\infty}\limits\lambda_n = \lambda$ and ${\mu\in (0,1)}$ we obtain
\[
\lim_{n\to\infty}\|w_n - y_n\| = 0.
\]
Hence,
\[
\|x_n - y_n\| 
\le \|w_n - y_n\| + \|w_n - x_n\|
\rightarrow 0,
\qquad \text{as } n \to \infty.
\]

Since the sequence $\{x_n\}$ is bounded, there exists a subsequence 
$\{x_{n_k}\}$ and a point $x^\ast \in \mathscr{H}$ such that 
$x_{n_k} \rightharpoonup x^\ast$.
Moreover, since $\|x_n - y_n\| \to 0$, we also have $y_{n_k} \rightharpoonup x^\ast.$
{Note that the sequence $\{x_{n_k} - y_{n_k}\}$ is bounded, since both $\{x_{n_k}\}$ and $\{y_{n_k}\}$ are bounded.}

Let $(x,y) \in \operatorname{Graph}(A+B)$, that is, $y \in (A+B)(x)$.  
Then, $y-B(x)\in A(x).$  Since  
$
y_{n_{k}} = J^{A}_{\lambda_{n_{k}}}\bigl(w_{n_{k}} - \lambda_{n_{k}} B(w_{n_{k}})\bigr)
$, we have 
\[
\frac{1}{\lambda_{n_{k}}}\bigl(w_{n_{k}} - \lambda_{n_{k}} B(w_{n_{k}})-y_{n_{k}}\bigr)\in A(y_{n_{k}}).
\]
Thus, from the monotonicity of $A$, we have
\[
\left\langle y- B(x)-
\frac{1}{\lambda_{n_k}} \bigl(w_{n_{k}} - \lambda_{n_{k}} B(w_{n_{k}})-y_{n_{k}}\bigr),\,x-y_{n_{k}}\right\rangle \ge 0,
\]
and so,
\begin{align*}
\langle y,\, x - y_{n_k} \rangle 
&\ge 
\left\langle 
B(x) + \frac{1}{\lambda_{n_k}}
\bigl( w_{n_k} - \lambda_{n_k} B(w_{n_k}) - y_{n_k} \bigr),\,
x - y_{n_k}
\right\rangle \\[6pt]
&=
\frac{1}{\lambda_{n_k}} \langle w_{n_k} - y_{n_k},\, x - y_{n_k} \rangle
+ \langle B(x) - B(y_{n_k}),\, x - y_{n_k} \rangle
+ \langle B(y_{n_k}) - B(w_{n_k}),\, x - y_{n_k} \rangle \\[6pt]
&\ge
\frac{1}{\lambda_{n_k}} \langle w_{n_k} - y_{n_k},\, x - y_{n_k} \rangle
+ \langle B(y_{n_k}) - B(w_{n_k}),\, x - y_{n_k} \rangle .
\end{align*}
Passing the limit in the last inequality and noting  that $\|w_{n_k} - y_{n_k}\|\to 0$ 
and $y_{n_{k}}\rightharpoonup x^\ast$, and by the Lipschitz continuity of $B$, we obtain
$ \langle {x-x^\ast ,\, y}\rangle \geq 0$ for all $(x,y) \in  \operatorname{Graph}(A+B)$. 
By using the maximal monotonicity of $A+B$, we get \( x^\ast\in \Omega \). {It follows from Lemma~\ref{lemma:boundedness} that $\lim\limits_{n \to \infty} \|x_n - x^\ast\|$ exists. Hence, by Lemma~\ref{opialcondition}, we conclude that the whole sequence $\{x_n\}$ converges weakly to a point in $\Omega$.}
This completes the proof.
\end{proof}

{ The following theorem provides a nonasymptotic convergence rate for Algorithm~\ref{algo:IPCMAS1} for solving VIP~\eqref{mainproblem}.
\begin{theorem}\label{thm:rate}
Let $\{x_n\}$ be a sequence generated by Algorithm~\ref{algo:IPCMAS1}. Suppose that Assumption \ref{assumption} {\rm (A1)-(A5)} hold. Then there exist a positive integer $N$ and a constant $C>0$ such that
\[
\min_{N\le j\le n}\|y_j-w_j\|
\le \left(\frac{C}{n-N+1}\right)^{1/2},\qquad \forall n\ge N.
\]
Consequently,
\[
\min_{N\le j\le n}\|y_j-w_j\|
=\mathcal{O}\left((n-N+1)^{-1/2}\right).
\]
\end{theorem}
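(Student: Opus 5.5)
The plan is to show that $\sum_{n}\|y_n-w_n\|^2<+\infty$ with an explicit bound on the tail, and then use the elementary fact that the minimum of $k$ nonnegative numbers is at most their average.

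\textbf{Step 1: tie $\|y_n-w_n\|$ to $\|u_n-w_n\|$.} Since $\lambda_n\to\lambda>0$ (Lemma~\ref{lemma:Lipschitz_lambda_convergence}) and $\mu\in(0,1)$, we have $\mu\lambda_n/\lambda_{n+1}\to\mu$. Fix $\epsilon>0$ with $\mu+\epsilon<1$. Then there is $N$ such that $\mu\lambda_n/\lambda_{n+1}\le\mu+\epsilon$ for all $n\ge N$. Inequality~\eqref{eq:27} then gives, for $n\ge N$,
\[
\|y_n-w_n\|\le \frac{1+\mu+\epsilon}{\gamma(1-\mu-\epsilon)}\,\|u_n-w_n\| =: c_0\|u_n-w_n\|.
\]
If the algorithm stops with $y_n=w_n$, the bound is trivial.

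\textbf{Step 2: bound $\sum_n\|u_n-w_n\|^2$.} I see two routes. The first uses \eqref{eq:007} directly: $\frac{2-\gamma}{\gamma}\|u_n-w_n\|^2\le\|w_n-x^\ast\|^2-\|u_n-x^\ast\|^2$. However, summing this does not telescope, because $w_n$ is not $x_{n+1}$.

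The cleaner route is the triangle inequality. Write $u_n-w_n=(u_n-z_n)+(z_n-w_n)$, and use \eqref{eq:23} together with $z_n-w_n=(\beta-\theta_n)(x_n-x_{n-1})$. This gives
\[
\|u_n-w_n\|\le \tfrac1\alpha\|x_{n+1}-x_n\|+\bigl(\tfrac{\beta}{\alpha}+1+\beta\bigr)\|x_n-x_{n-1}\|.
\]
Squaring and applying $(a+b)^2\le2a^2+2b^2$ yields
\[
\|u_n-w_n\|^2\le c_1\bigl(\|x_{n+1}-x_n\|^2+\|x_n-x_{n-1}\|^2\bigr).
\]
Summing over $n$ and invoking \eqref{eq:19.1} (the proof of Lemma~\ref{lemma:boundedness}) gives $\sum_{n\ge1}\|u_n-w_n\|^2\le 2c_1 S$, where $S$ is the explicit finite bound from \eqref{eq:19.1}, plus the single term $\|x_1-x_0\|^2$.

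\textbf{Step 3: conclude.} Set $C:=c_0^2\cdot 2c_1(S+\|x_1-x_0\|^2)$. Then
\[
(n-N+1)\min_{N\le j\le n}\|y_j-w_j\|^2\le\sum_{j=N}^n\|y_j-w_j\|^2\le C.
\]
Taking square roots gives the claimed bound, and the $\mathcal O\bigl((n-N+1)^{-1/2}\bigr)$ statement follows immediately.

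The main obstacle is Step 1. The constant in \eqref{eq:27} depends on $n$ through the ratio $\lambda_n/\lambda_{n+1}$, so a uniform bound holds only after some index $N$. This is exactly why the statement starts at $N$. One must also check that $\lambda_{n+1}\ge\min\{\lambda_1,\mu/L\}>0$, so that this ratio is uniformly controlled.
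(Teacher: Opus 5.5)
Your argument is correct. It reaches the key bound $\sum_{j\ge N}\|y_j-w_j\|^2<+\infty$ by a genuinely different route from the paper's.

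The paper works through the energy inequality. Combining \eqref{eq:007}, \eqref{eq:08a}, \eqref{eq:09}, \eqref{eq:10} and \eqref{eq:27}, it keeps the descent term $-\alpha\gamma(2-\gamma)\bigl(1-\mu\lambda_n/\lambda_{n+1}\bigr)^2\bigl(1+\mu\lambda_n/\lambda_{n+1}\bigr)^{-2}\|y_n-w_n\|^2$ in the one-step recursion for $\|x_{n+1}-x^\ast\|^2$ and then telescopes. Because of the inertial term, this forces it to control $\Gamma_n=\|x_n-x^\ast\|^2-\|x_{n-1}-x^\ast\|^2$ through its positive part and the geometric recursion $[\Gamma_{n+1}]_+\le a[\Gamma_n]_++\varsigma_n$, with \eqref{eq:19.1} supplying $\sum_n\varsigma_n<+\infty$.

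You skip the energy inequality altogether. The algebraic identities $u_n-z_n=\alpha^{-1}(x_{n+1}-z_n)$ and $z_n-w_n=(\beta-\theta_n)(x_n-x_{n-1})$ dominate $\|u_n-w_n\|^2$ by squared increments. So \eqref{eq:19.1} alone gives summability, and \eqref{eq:27} for $n\ge N$ transfers it to $\|y_n-w_n\|^2$.

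Your version is shorter and avoids the sign bookkeeping for $\Gamma_n$. In the paper, replacing the coefficient $(1-\alpha)\beta+\alpha\theta_n$ of $\Gamma_n$ by $a$ is only legitimate after passing to $[\Gamma_n]_+$. The price is a different constant: yours carries a factor of order $\alpha^{-2}\gamma^{-2}$ where the paper's carries $(\alpha\gamma(2-\gamma))^{-1}$. This is irrelevant for the $\mathcal{O}\bigl((n-N+1)^{-1/2}\bigr)$ claim.

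Both proofs rest on the same two ingredients: \eqref{eq:27} with $\mu\lambda_n/\lambda_{n+1}$ bounded away from $1$ for $n\ge N$, and the summability \eqref{eq:19.1} from Lemma~\ref{lemma:boundedness}.
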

\begin{proof}
From \eqref{eq:007}, \eqref{eq:08a}, \eqref{eq:09}, and \eqref{eq:27}, and using the fact that $\theta_n \leq 1$, we obtain
\begin{align}\label{eq:B5} 
\|x_{n+1} - x^\ast\|^2
&\le \left[1 + (1 - \alpha)\beta + \alpha\, \theta_n\right]\|x_n - x^\ast\|^2 - \left[(1 - \alpha)\beta + \alpha\, \theta_n\right]\|x_{n-1} - x^\ast\|^2 \nonumber\\ 
&\quad+ \left[2\beta(1 - \alpha) + \alpha\, \theta_n(1 + \theta_n)\right]\|x_n - x_{n-1}\|^2 \nonumber\\
&\quad- (1 - \alpha)(1 - \beta)\|x_{n+1} - x_n\|^2 - \alpha\,\gamma\,(2-\gamma)\,\frac{\left(1 - \mu \frac{\lambda_n}{\lambda_{n+1}}\right)^2}{\left(1 + \mu \frac{\lambda_n}{\lambda_{n+1}}\right)^2}\|y_n-w_n\|^2\nonumber\\ 
&\le\|x_n - x^\ast\|^2 +\left((1 - \alpha)\beta + \alpha \theta_n\right)\left[\|x_{n} - x^\ast\|^2-\|x_{n-1} - x^\ast\|^2\right] \nonumber\\ 
&\quad+ \left[2\beta(1 - \alpha) + \alpha\, \theta_n(1 + \theta_n)\right]\|x_n - x_{n-1}\|^2 - \alpha\,\gamma\,(2-\gamma)\,\frac{\left(1 - \mu \frac{\lambda_n}{\lambda_{n+1}}\right)^2}{\left(1 + \mu \frac{\lambda_n}{\lambda_{n+1}}\right)^2}\|y_n-w_n\|^2\nonumber\\ 
&\le\|x_n - x^\ast\|^2 +\left((1 - \alpha)\beta + \alpha\right)\left[\|x_{n} - x^\ast\|^2-\|x_{n-1} - x^\ast\|^2\right] \nonumber\\ 
&\quad+ \left[2\beta(1 - \alpha) + 2\,\alpha \right]\|x_n - x_{n-1}\|^2 - \alpha\,\gamma\,(2-\gamma)\,\frac{\left(1 - \mu \frac{\lambda_n}{\lambda_{n+1}}\right)^2}{\left(1 + \mu \frac{\lambda_n}{\lambda_{n+1}}\right)^2}\,\|y_n-w_n\|^2\nonumber\\ 
&\leq\|x_n - x^\ast\|^2 +a\left[\|x_{n} - x^\ast\|^2-\|x_{n-1} - x^\ast\|^2\right] \nonumber\\
&\quad+ b\, \|x_n - x_{n-1}\|^2 - \alpha\,\gamma\,(2-\gamma)\frac{\left(1 - \mu \frac{\lambda_n}{\lambda_{n+1}}\right)^2}{\left(1 + \mu \frac{\lambda_n}{\lambda_{n+1}}\right)^2}\,\|u_n-w_n\|^2,
\end{align} 
where $a=(1 - \alpha)\beta + \alpha$ and $b=2\beta(1 - \alpha) + 2\,\alpha.$  Since $\lim_{n \to \infty} \lambda_n = \lambda$, it follows that
\[
\lim_{n \to \infty} 
\frac{\left(1 - \mu \frac{\lambda_n}{\lambda_{n+1}}\right)^2}
{\left(1 + \mu \frac{\lambda_n}{\lambda_{n+1}}\right)^2}
= \frac{(1-\mu)^2}{(1+\mu)^2}.
\]
Therefore, since $\dfrac{(1-\mu)^2}{(1+\mu)^2}>0$, there exists an integer $N \geq 1$ such that
\begin{equation}\label{eq:LB}
\frac{\left(1 - \mu \frac{\lambda_n}{\lambda_{n+1}}\right)^2}
{\left(1 + \mu \frac{\lambda_n}{\lambda_{n+1}}\right)^2}
\geq \frac{1}{2}\frac{(1-\mu)^2}{(1+\mu)^2}, 
\quad \text{for all } n \geq N.
\end{equation}

Combining \eqref{eq:B5} and \eqref{eq:LB}, and rearranging terms, we obtain for all $n\ge N$,
\begin{align}\label{eq:B6}
\frac{1}{2}\alpha\,\gamma\,(2-\gamma)\frac{(1-\mu)^2}{(1+\mu)^2}\|y_n-w_n\|^2
&\le \|x_n-x^\ast\|^2-\|x_{n+1}-x^\ast\|^2 \nonumber\\
&\quad +a\left[\|x_n-x^\ast\|^2-\|x_{n-1}-x^\ast\|^2\right] \nonumber\\
&\quad +b\|x_n-x_{n-1}\|^2.
\end{align}

Set
$
\varphi_n:=\|x_n-x^\ast\|^2,~
\Gamma_n:=\varphi_n-\varphi_{n-1},~
\varsigma_n:=b\|x_n-x_{n-1}\|^2,
$ and 
$\zeta:=\frac{1}{2}\alpha\,\gamma\,(2-\gamma)\frac{(1-\mu)^2}{(1+\mu)^2}.$
Then \eqref{eq:B6} can be written as
\begin{align}\label{eq:B10}
\zeta\|y_n-w_n\|^2
&\le \varphi_n-\varphi_{n+1}+a\Gamma_n+\varsigma_n \nonumber\\
&\le \varphi_n-\varphi_{n+1}+a[\Gamma_n]_+ +\varsigma_n,
\qquad \forall n\ge N,
\end{align}
where $[\Gamma_n]_+:=\max\{\Gamma_n,0\}$.

On the other hand, it follows from \eqref{eq:B5} that
$\Gamma_{n+1}\le a\Gamma_n+\varsigma_n,$\quad for all $n\ge N$,

Hence,
\begin{equation}\label{eq:B15}
[\Gamma_{n+1}]_+\le a[\Gamma_n]_+ +\varsigma_n,
\qquad \forall n\ge N,
\end{equation}
which implies
\begin{align}\label{eq:B16}
[\Gamma_{n+1}]_+ 
&\le a\, [\Gamma_{n}]_+ + \varsigma_n \nonumber\\
&\le a^2\, [\Gamma_{n-1}]_+ + a\, \varsigma_{n-1} + \varsigma_n \nonumber\\
&\;\;\vdots \nonumber\\
&\le a^{n-N+1}\, [\Gamma_{N}]_+ + a^{n-N}\, \varsigma_{N} + \cdots + a\,\varsigma_{n-1} + \varsigma_n \nonumber\\
&= a^{n-N+1} [\Gamma_{N}]_+ + \sum_{j=1}^{n-N+1} a^{j-1} \varsigma_{n+1-j}.
\end{align}
Thus,
\begin{align}\label{eq:B18}
\sum_{n=N}^{\infty}[\Gamma_{n+1}]_+
&\le \frac{a}{1-a}[\Gamma_N]_+ +\frac{1}{1-a}\sum_{n=N}^{\infty}\varsigma_n.
\end{align}

In view of \eqref{eq:19.1}, we have
\(
\sum\limits_{n=1}^{\infty}\|x_n-x_{n-1}\|^2<\infty.
\)
Thus, there exists $M>0$ such that
\[
\sum_{n=1}^{\infty}\|x_n-x_{n-1}\|^2\le M.
\]
Using the definition of $\varsigma_n$, we get
\begin{equation}\label{eq:B21}
\sum_{n=1}^{\infty}\varsigma_n
=
b\sum_{n=1}^{\infty}\|x_n-x_{n-1}\|^2
\le b\,M
=:M_1.
\end{equation}
Combining \eqref{eq:B18} and \eqref{eq:B21}, we obtain
\begin{equation}\label{eq:B22}
\sum_{n=N}^{\infty}[\Gamma_{n+1}]_+
\le \frac{a}{1-a}[\Gamma_N]_+ +\frac{M_1}{1-a}.
\end{equation}

Summing \eqref{eq:B10} from $j=N$ to $n$, we arrive at
\begin{align}\label{eq:B23}
\zeta\sum_{j=N}^{n}\|y_j-w_j\|^2
&\le \sum_{j=N}^{n}(\varphi_j-\varphi_{j+1})
+a\sum_{j=N}^{n}[\Gamma_j]_+
+\sum_{j=N}^{n}\varsigma_j \nonumber\\
&\le \varphi_N
+a\left([\Gamma_N]_+ +\sum_{j=N}^{\infty}[\Gamma_{j+1}]_+\right)
+\sum_{j=N}^{\infty}\varsigma_j \nonumber\\
&\le \varphi_N+\frac{a}{1-a}[\Gamma_N]_+ +\frac{M_1}{1-a}.
\end{align}
Consequently,
\begin{equation}\label{eq:B24}
\sum_{j=N}^{n}\|y_j-w_j\|^2
\le
\left[
\|x_N-x^\ast\|^2
+\frac{a}{1-a}\left[\|x_N-x^\ast\|^2-\|x_{N-1}-x^\ast\|^2\right]_+
+\frac{M_1}{1-a}
\right]\frac{1}{\zeta}.
\end{equation}

Hence,
\begin{align*}
\min_{N\le j\le n}\|y_j-w_j\|
\le
\left[
\frac{1}{n-N+1}
\left(
\|x_N-x^\ast\|^2
+\frac{a}{1-a}\left[\|x_N-x^\ast\|^2-\|x_{N-1}-x^\ast\|^2\right]_+
+\frac{M_1}{1-a}
\right)\frac{1}{\zeta}
\right]^{1/2}.
\end{align*}
Define
\[
C:=
\left(
\|x_N-x^\ast\|^2
+\frac{a}{1-a}\left[\|x_N-x^\ast\|^2-\|x_{N-1}-x^\ast\|^2\right]_+
+\frac{M_1}{1-a}
\right)\frac{1}{\zeta}.
\]
Then
\[
\min_{N\le j\le n}\|y_j-w_j\|
\le \left(\frac{C}{n-N+1}\right)^{1/2},\qquad \forall n\ge N.
\]
This completes the proof.
\end{proof}

\begin{remark}

It follows from Remark~\ref{rm:wnyn} that if $y_n = w_n$, then $y_n \in \Omega$. Hence, if $\|y_n - w_n\|< \varepsilon$, then $y_n$ is regarded as an $\varepsilon$-approximate solution of \eqref{mainproblem}.In view of Theorem~\ref{thm:rate}, to obtain such an iterate, that is an iterate satisfying $\|y_n - w_n\|<\varepsilon$, Algorithm~\ref{algo:IPCMAS1} requires at most
\[
\left\lceil
\frac{
\|x_N-x^\ast\|^2
+\dfrac{a}{1-a}\left[\|x_N-x^\ast\|^2-\|x_{N-1}-x^\ast\|^2\right]_+
+\dfrac{M_1}{1-a}
}{
\zeta\,\varepsilon^2
}
\right\rceil + N - 1
\]
iterations. Therefore, the residual iteration complexity is of order $\mathcal{O}(\varepsilon^{-2})$.
\end{remark}
}

\subsection{Strong Convergence}

In this subsection, we establish the strong convergence of Algorithm \ref{algo:IPCMAS1} 
under the assumption of {$\nu$-strong monotonicity} of $A$.

If $A$ is {$\nu$-strongly monotone},
then the problem \eqref{mainproblem} has a unique solution (see, for example, \cite{bauschke_convex_2017}).

\begin{theorem}\label{thm:strong_convergence}
Suppose that Assumption \ref{assumption} {\rm (A1)-(A4)} hold.
If $A$ is {$\nu$-strongly monotone},
then the sequence $\{x_n\}$ generated by Algorithm \ref{algo:IPCMAS1} converges strongly
to a unique solution in $\Omega$.
\end{theorem}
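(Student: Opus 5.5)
The plan is to reuse the estimates from the proof of Lemma~\ref{lemma:boundedness} and Theorem~\ref{th:weakconv}, and to sharpen the inequality \eqref{eq:03} using the strong monotonicity of $A$. Let $x^\ast$ denote the unique element of $\Omega$. The statement lists only (A1)--(A4), but I will also use (A5), which is needed for the parameter bounds in Lemma~\ref{lemma:boundedness}.

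\emph{Step 1: a strengthened key inequality.} From \eqref{eq:0.0} we have $\frac{1}{\lambda_n}(w_n-y_n)-B(w_n)\in A(y_n)$, and $-B(x^\ast)\in A(x^\ast)$. The $\nu$-strong monotonicity of $A$ then gives
\[
\langle y_n-x^\ast,\,w_n-y_n-\lambda_n(B(w_n)-B(x^\ast))\rangle\ge \nu\lambda_n\|y_n-x^\ast\|^2 .
\]
Adding the monotonicity inequality \eqref{eq:02} for $B$ yields
\[
\langle y_n-x^\ast,d_n\rangle\ge \nu\lambda_n\|y_n-x^\ast\|^2 .
\]

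\emph{Step 2: show $\|y_n-x^\ast\|\to0$ directly.} Since the weak-convergence analysis still applies, Theorem~\ref{th:weakconv} and its proof give $\|w_n-y_n\|\to0$, and $\{y_n\}$ is bounded. We also know $\|d_n\|\le(1+\mu\lambda_n/\lambda_{n+1})\|w_n-y_n\|\to0$. Cauchy--Schwarz applied to Step~1 gives
\[
\nu\lambda_n\|y_n-x^\ast\|^2\le\|y_n-x^\ast\|\,\|d_n\|,
\qquad\text{hence}\qquad
\|y_n-x^\ast\|\le \frac{\|d_n\|}{\nu\lambda_n}.
\]
By Lemma~\ref{lemma:Lipschitz_lambda_convergence}, $\lambda_n\to\lambda\ge\min\{\lambda_1,\mu/L\}>0$, so $\lambda_n$ is bounded away from zero for large $n$. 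Therefore $\|y_n-x^\ast\|\to0$.

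\emph{Step 3: transfer to $x_n$.} We have
\[
\|x_n-x^\ast\|\le\|x_n-w_n\|+\|w_n-y_n\|+\|y_n-x^\ast\|.
\]
The first term tends to $0$ by \eqref{eq:20}, and the other two tend to $0$ by Step~2. Hence $x_n\to x^\ast$ strongly.

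\emph{Main obstacle.} The delicate point is justifying that $\|w_n-y_n\|\to0$ and $\|d_n\|\to0$ hold without the full hypotheses of Theorem~\ref{th:weakconv}. These facts come from \eqref{eq:19.1}, \eqref{eq:24} and \eqref{eq:27}. They depend only on Lemma~\ref{lemma:boundedness} and on $\alpha<1/3$ (for the positivity in \eqref{eq:15}), so I will invoke (A5) explicitly, or note that it is implicit in (A4). Once these limits are available, the strong monotonicity argument is short, and it does not need Opial's lemma or the uniqueness of weak cluster points.
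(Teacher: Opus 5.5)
Your proof is correct, and it reaches the conclusion by a shorter route than the paper's proof. Both proofs start from the same strengthened inequality $\langle y_n-x^\ast,d_n\rangle\ge\nu\lambda_n\|y_n-x^\ast\|^2$. Your derivation, which adds the monotonicity inequality \eqref{eq:02} for $B$, has it in the correct form; the paper writes this step as an equality.

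From there the paper feeds the extra term into the energy estimate for $\|u_n-x^\ast\|^2$. It bounds $\eta_n$ from below for large $n$ as in \eqref{eq:sc08}. It then sums the resulting Lyapunov-type inequality, handling the variable coefficient $(1-\alpha)\beta+\alpha\theta_n$ by a telescoping step in \eqref{eq:59}, to obtain $\sum_n\|y_n-x^\ast\|^2<+\infty$. It finishes exactly as in your Step~3.

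You bypass the energy inequality altogether. Cauchy--Schwarz turns the strengthened inequality into the error bound $\|y_n-x^\ast\|\le\|d_n\|/(\nu\lambda_n)\le\frac{1+\mu\lambda_n/\lambda_{n+1}}{\nu\lambda_n}\|w_n-y_n\|$. The fact $\|w_n-y_n\|\to0$ is already available from the weak-convergence analysis.

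What each route buys:
\begin{itemize}
\item The paper's route gives the slightly stronger summability of $\|y_n-x^\ast\|^2$. It also produces the estimate \eqref{eq:sc2.2}, which is reused in the $R$-linear analysis of Algorithm~\ref{algo:IPCMAS2}.
\item Your route is more elementary. It needs no lower bound on $\eta_n$ and no telescoping with varying coefficients. It also yields a residual-based error bound with a uniformly bounded constant, since $\min\{\lambda_1,\mu/L\}\le\lambda_n\le\lambda_1+\xi$. Combined with Theorem~\ref{thm:rate}, this immediately gives $\min_{N\le j\le n}\|y_j-x^\ast\|=\mathcal{O}\bigl((n-N+1)^{-1/2}\bigr)$.
\end{itemize}

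Your remark about (A5) is well taken. The paper's proof also depends on it, through \eqref{eq:19.1} and \eqref{eq:20}, even though the statement lists only (A1)--(A4).
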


\begin{proof}
Let $x^\ast\in \Omega$ be the unique solution of the problem \eqref{mainproblem}. 
Then, we have {$-\lambda_{n}B(x^\ast)\in \lambda_{n}A(x^\ast)$}.
From the $\nu$-strong monotonicity of $A$ and \eqref{eq:0.0}, we have
\begin{equation}\label{eq:0.01}
    \left\langle w_{n} - y_{n} - \lambda_{n} B w_{n} + \lambda_{n} B (x^\ast),\ y_{n} - x^\ast\right\rangle
\geq \nu \lambda_{n} \| y_{n} - x^\ast \|^{2}.
\end{equation}
By \eqref{eq:2.2} and \eqref{eq:0.01}, we get
$$\left\langle y_{n}-x^\ast, d_{n}\right\rangle=\left\langle\ y_{n} - x^\ast,\, w_{n} - y_{n} - \lambda_{n} B w_{n} + \lambda_{n} B (x^\ast)\right\rangle
\geq \nu \lambda_{n} \| y_{n} - x^\ast \|^{2}.$$
That is,
\begin{equation}\label{eq:sc1}
    \left\langle y_{n}-x^\ast, d_{n}\right\rangle\geq \nu \lambda_{n} \| y_{n} - x^\ast \|^{2}.
\end{equation}
Since
\begin{equation}\label{eq:sc0.0}
\langle w_n - x^\ast, d_n \rangle
= \langle w_n - y_n, d_n \rangle
+ \langle y_n - x^\ast, d_n \rangle,  
\end{equation}
from \eqref{eq:sc1} and \eqref{eq:sc0.0}, we obtain
\begin{equation}\label{eq:sc2.0}
\langle w_n - x^\ast, d_n \rangle
\ge
\langle w_n - y_n, d_n \rangle
+ \nu \lambda_n \|y_n - x^\ast\|^2 .
\end{equation}
Substituting \eqref{eq:sc2.0} into \eqref{eq;04}, we have
\begin{align}\label{eq:sc2}
\|u_n - x^\ast\|^2
\le\;&
\|w_n - x^\ast\|^2
+ \gamma^2 \eta_n^2 \|d_n\|^2
- 2\gamma \eta_n \langle w_n - y_n, d_n \rangle
- 2\gamma \eta_n \nu \lambda_n \|y_n - x^\ast\|^2 \nonumber \\
=\;&
\|w_n - x^\ast\|^2
+ \gamma^2 \eta_n \langle w_n - y_n, d_n \rangle
- 2\gamma \eta_n \langle w_n - y_n, d_n \rangle
- 2\gamma \eta_n \nu \lambda_n \|y_n - x^\ast\|^2 \nonumber \\
=\;&
\|w_n - x^\ast\|^2
- \gamma(2-\gamma)\eta_n
\langle w_n - y_n, d_n \rangle
- 2\gamma \eta_n \nu \lambda_n \|y_n - x^\ast\|^2 .
\end{align}
From \eqref{eq:06} and \eqref{eq:sc2}, we obtain
\begin{equation}\label{eq:sc2.2}
  \|u_n - x^\ast\|^2
\le
\|w_n - x^\ast\|^2
- \frac{(2-\gamma)}{\gamma} \|u_n - w_n\|^2
- 2\gamma\,{\eta_n}\,\lambda_n \|y_n - x^\ast\|^2 .
\end{equation}
Thus,
\begin{equation}\label{eq:sc03}
    \|u_n - x^\ast\|^2
\le
\|w_n - x^\ast\|^2-
2\gamma\,{\eta_n}\,\lambda_n \|y_n - x^\ast\|^2 .
\end{equation}
On combining \eqref{eq:08a} and \eqref{eq:sc03}, we get
\begin{align}\label{eq:sc04}
\|x_{n+1}-x^\ast\|^2&\le (1-\alpha)\|z_n-x^\ast\|^2
+ \alpha\|w_n-x^\ast\|^2
- \alpha(1-\alpha)\|z_n-u_n\|^2\nonumber\\
&\quad- 2\alpha\gamma\eta_n\nu\lambda_n\|y_n-x^\ast\|^2  . 
\end{align}
Substituting \eqref{eq:09}, \eqref{eq:10}, and \eqref{eq:11} into \eqref{eq:sc04}, we obtain
\begin{align}
\|x_{n+1}-x^\ast\|^2
\le\;&
(1-\alpha)(1+\beta)\|x_n-x^\ast\|^2
- (1-\alpha)\beta\|x_{n-1}-x^\ast\|^2 \nonumber\\
&+ (1-\alpha)\beta(1+\beta)\|x_n-x_{n-1}\|^2
+ \alpha(1+\theta_n)\|x_n-x^\ast\|^2 \nonumber\\
&- \alpha\theta_n\|x_{n-1}-x^\ast\|^2
+ \alpha\theta_n(1+\theta_n)\|x_n-x_{n-1}\|^2 \nonumber\\
&- (1-\alpha)(1-\beta)\|x_{n+1}-x_n\|^2
+ (1-\alpha)\beta(1-\beta)\|x_n-x_{n-1}\|^2 \nonumber\\
&- 2\alpha\gamma\eta_n\nu\lambda_n\|y_n-x^\ast\|^2 \nonumber\\
=\;&
\bigl[1+(1-\alpha)\beta+\alpha\theta_n\bigr]\|x_n-x^\ast\|^2
- \bigl[(1-\alpha)\beta+\alpha\theta_n\bigr]\|x_{n-1}-x^\ast\|^2 \nonumber\\
&+ \bigl[2\beta(1-\alpha)+\alpha\theta_n(1+\theta_n)\bigr]\|x_n-x_{n-1}\|^2 \nonumber\\
&- (1-\alpha)(1-\beta)\|x_{n+1}-x_n\|^2
- 2\alpha\gamma\eta_n\nu\lambda_n\|y_n-x^\ast\|^2 .
\end{align}
Define 
\[
\vartheta := \sup_{n\ge 1}\Bigl\{2\beta(1-\alpha)+\alpha\theta_n(1+\theta_n)\Bigr\}.
\]
Therefore,
\begin{align}\label{eq:sc4.1}
2\alpha\gamma\nu\,\eta_n\lambda_n \|y_n-x^\ast\|^2
&\le \|x_n-x^\ast\|^2 - \|x_{n+1}-x^\ast\|^2
+ \vartheta \|x_n-x_{n-1}\|^2\nonumber\\
&+ \bigl[(1-\alpha)\beta + \alpha\theta_n\bigr]
\bigl[\|x_n-x^\ast\|^2 - \|x_{n-1}-x^\ast\|^2\bigr].
\end{align}
{Since $\mu \in (0,1)$,} fix any constant $\lambda^\ast \in (\mu,1)$.  Since $\lambda_n \to \lambda$ as $n \to \infty$,
\(
\lim\limits_{n \to \infty} \frac{\lambda_n \mu}{\lambda_{n+1}} = \mu < \lambda^\ast.
\)
Hence, there exists an integer $N> 0$ such that
\(
\frac{\lambda_n \mu}{\lambda_{n+1}} < \lambda^\ast,
\quad \forall n \ge N.
\)
{
Consider the function $f(t) = \frac{1-t}{(1+t)^2}$, which is strictly decreasing on $(0,1)$ since $f'(t) = \frac{t-3}{(1+t)^3} < 0$ for $t \in (0,1)$. Since $\frac{\lambda_n \mu}{\lambda_{n+1}} < \lambda^\ast < 1$ for all $n \ge N$, we obtain
\begin{equation}\label{eq:sc07}
\frac{1 - \lambda^\ast}{(1 + \lambda^\ast)^2}
<
\frac{1 - \frac{\lambda_n \mu}{\lambda_{n+1}}}
{\left( 1 + \frac{\lambda_n \mu}{\lambda_{n+1}} \right)^2},
\quad \forall n \ge N.
\end{equation}
}
Now, using relations \eqref{eq:25} and \eqref{eq:26} together with inequality \eqref{eq:sc07}, we deduce that
\begin{align}\label{eq:sc08}
\eta_n
&= \frac{\langle w_n - y_n , d_n \rangle}{\| d_n \|^2} \nonumber \\
&\ge
\frac{\left(1 - \frac{\lambda_n \mu}{\lambda_{n+1}}\right)\| w_n - y_n \|^2}
{\left(1 + \frac{\lambda_n \mu}{\lambda_{n+1}}\right)^2 \| w_n - y_n \|^2} \nonumber \\
&=
\frac{1 - \frac{\lambda_n \mu}{\lambda_{n+1}}}
{\left(1 + \frac{\lambda_n \mu}{\lambda_{n+1}}\right)^2} \nonumber \\
&\ge
\frac{1 - \lambda^\ast}{(1 + \lambda^\ast)^2},
\quad \forall n \ge N.
\end{align}
Substituting inequality \eqref{eq:sc08} into \eqref{eq:sc4.1}, we obtain
\begin{align}
2 \alpha \gamma
\frac{1 - \lambda^\ast}{(1 + \lambda^\ast)^2}
\lambda_n \| y_n - x^\ast \|^2
&\le
2 \alpha \gamma \eta_n \lambda_n \| y_n - x^\ast \|^2 \nonumber \\
&\le
\big[(1 - \alpha)\beta + \alpha \theta_n\big]
\big[ \| x_n - x^\ast \|^2 - \| x_{n-1} - x^\ast \|^2 \big] \nonumber \\
&\quad
+ \| x_n - x^\ast \|^2 - \| x_{n+1} - x^\ast \|^2
+ \vartheta \| x_n - x_{n-1} \|^2 .
\end{align}
Therefore,
\begin{equation}\label{eq:59}
\begin{aligned}
{2 \alpha \gamma
\frac{1 - \lambda^\ast}{(1 + \lambda^\ast)^2}
\min\left\{ \lambda_1, \frac{\mu}{L} \right\}
\sum_{k=N}^{n} \|y_k-x^\ast\|^2}
\le\;&
\|x_{N}-x^\ast\|^2 - \|x_{n+1}-x^\ast\|^2 \\
&+ \vartheta \sum_{k=N}^{n} \|x_k-x_{k-1}\|^2
+ \bigl[(1-\alpha)\beta+\alpha\theta_n\bigr]\|x_n-x^\ast\|^2 \\
&- \bigl[(1-\alpha)\beta+\alpha\theta_{N-1}\bigr]\|x_{N-1}-x^\ast\|^2 .
\end{aligned}
\end{equation}
Since the sequence $\{x_n\}$ is bounded and
\(
\sum_{k=N}^{\infty}\|x_k-x_{k-1}\|^2 < {+}\infty
\quad\text{by} \eqref{eq:19.1},
\)
it follows from \eqref{eq:59} that
\(
\sum_{k=N}^{\infty}\|y_k-x^\ast\|^2 < {+}\infty.
\)
Hence
\[
\lim_{n\to\infty}\|y_n-x^\ast\| = 0.
\]
Consequently, we get
\begin{equation}\label{eq:60}
\|x_n-x^\ast\|
\le \|x_n-w_n\| + \|w_n-y_n\| + \|y_n-x^\ast\|
\longrightarrow 0
\quad\text{as } n\to\infty .
\end{equation}
This completes the proof.
\end{proof}


\subsection{Linear Convergence}\label{S:4}


Recall that a sequence $\{x_n\}$ in a Hilbert space $\mathscr{H}$ is said to converge 
\emph{R-linearly} to $x^\ast$ with rate $\rho \in [0,1)$ {\rm\cite{bauschke_convex_2017}}
if there exists a constant $c > 0$ such that
\[
\|x_n - x^\ast\| \le c\, \rho^{n}, \quad \forall n \in \mathbb{N}.
\]

{In this subsection, we analyze the linear convergence of a parameter-restricted variant of Algorithm~\ref{algo:IPCMAS1}, presented below as Algorithm~\ref{algo:IPCMAS2} (\algoT\!\!).} To obtain linear convergence, we modify the algorithm parameters under the assumptions stated below.
\begin{assumption}\label{linearassm}
\begin{itemize}
    \item[\textnormal{{(A1$'$)}}]  $A\colon\mathscr{H}\to 2^{\mathscr{H}}$ is {$\nu$-strongly monotone and maximal monotone}.

    \item[\textnormal{{(A2$'$)}}]  $B\colon\mathscr{H}\to \mathscr{H}$ is $L$-Lipschitz continuous and monotone.

    \item[\textnormal{{(A3$'$)}}] $\beta =0$ and $0 \leq \theta_n=\theta$,
    $0\leq\theta<\frac{1}{\tau}-1$, where {$\tau:=1-\frac{1}{4}\min\left\{
\frac{(2\gamma - \gamma^2)(1 - \mu)^2}{(1 + \mu)^2}, \;
2\,\nu \gamma\frac{1 - \mu}{(1 + \mu)^2}\min\left\{ \lambda_1, \frac{\mu}{L} \right\}
\right\}.$}
\item[\textnormal{{(A4$'$)}}]  $0 < \alpha < \frac{1}{3}$.
\end{itemize}
\end{assumption}

Hence Algorithm \ref{algo:IPCMAS1} reduces to the following under Assumption \ref{linearassm}.
\begin{algorithm}[H]
	\begin{footnotesize}
		\begin{description}
			\item[Step 0.] Let $\gamma\in (0,2)$, $\mu\in(0,1)$ and $\lambda_{1}>0$. Choose $ x_{0}, x_{1}\in\mathscr{H}$ be given starting points. Set $n:=1$
			\item[Step 1.]  Compute

\begin{equation}\label{linearwnyn}
 \begin{cases}
w_{n}=x_{n}+\theta\left(x_{n}-x_{n-1} \right)\\
y_n = J^{A}_{\lambda_{n}}\left(w_n - \lambda_n B(w_{n})\right).
\end{cases}
\end {equation}
 If $y_{n}=w_{n}$, then stop $y_{n}\in \Omega$.  Otherwise go to \textbf{Step~2}.
			\item[Step 2.] Compute $u_{n}=w_{n}-\gamma\eta_{n}d_{n}$, where
                        \begin{equation}\label{lineardn}
            d_{n}=w_{n}-y_{n}-\lambda_{n}\left(B(w_{n})-B(y_{n})\right),
            \end{equation}
        \textit{and}
        \begin{equation}\label{linearetan}
            \eta_{n}=
	\begin{cases}
		\frac{\left\langle w_{n}-y_{n},d_{n}\right\rangle}{\|d_{n}\|^2},& \mbox{if}~ d_{n}\ne0,\\[5pt]
		0 &\mbox{if}~ d_{n}=0.
	\end{cases}
        \end{equation}

                \item[Step 3.] Compute
                \begin{equation}\label{lineardefxn}
                x_{n+1}=(1-\alpha)x_{n}+\alpha u_{n}.
                \end{equation}
           Update     \begin{equation}\label{lineardeflambda}
	\lambda_{n+1}=
	\begin{cases}
		\min\left\{\frac{\mu \|w_{n}-y_{n}\|}{\|B(w_{n})-B(y_{n})\|},
		\lambda_{n}+\xi_{n}\right\} & \mbox{if } B(w_{n})\neq B(y_{n}),\\[5pt]
		\lambda_{n}+\xi_{n} & \mbox{otherwise}.
	\end{cases}
\end{equation}
                Set $n:=n+1$ and go to  \textbf{Step~1}.
		\end{description}
		
		\caption{New Inertial Projection and Contraction Method with Adaptive Step-size (\algoT\!\!)}
        \label{algo:IPCMAS2}
	\end{footnotesize}
\end{algorithm}

{
\begin{remark}
Algorithm~\ref{algo:IPCMAS2} is a special case of Algorithm~\ref{algo:IPCMAS1} obtained by setting $\beta = 0$ in assumption~(A3$'$).
\end{remark}
}

\begin{theorem}\label{thm:rlinear}
Let $\{x_{n}\}$ be a sequence generated by Algorithm {\rm\ref{algo:IPCMAS2}},
and suppose that the Assumption {\rm\ref{linearassm}} {{\rm (A1$'$)-(A4$'$)}} hold.
Then, the sequence $\{x_{n}\}$ converges strongly to a unique point of $\Omega$
with R-linear convergence rate.
\end{theorem}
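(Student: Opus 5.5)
The plan is to turn the strong-monotonicity estimate from the proof of Theorem~\ref{thm:strong_convergence} into a \emph{uniform contraction} $\|u_n-x^\ast\|^2\le\tau\|w_n-x^\ast\|^2$ for all large $n$. I would then feed this into the averaging step $x_{n+1}=(1-\alpha)x_n+\alpha u_n$ (here $\beta=0$, so $z_n=x_n$) to get a two-step linear recursion for $a_n:=\|x_n-x^\ast\|^2$ whose coefficients sum to less than one. Existence and uniqueness of $x^\ast$ come from the strong monotonicity of $A$, as noted before Theorem~\ref{thm:strong_convergence}.

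\textbf{Step 1 (contraction of the correction step).} Start from \eqref{eq:sc2.2}:
\[
\|u_n-x^\ast\|^2\le\|w_n-x^\ast\|^2-\tfrac{2-\gamma}{\gamma}\|u_n-w_n\|^2-2\gamma\nu\eta_n\lambda_n\|y_n-x^\ast\|^2,
\]
where I restore the factor $\nu$ that was dropped in the display. Next I bound the two negative terms from below.
\begin{itemize}
\item By \eqref{eq:27}, $\frac{2-\gamma}{\gamma}\|u_n-w_n\|^2\ge(2\gamma-\gamma^2)\frac{(1-\mu\lambda_n/\lambda_{n+1})^2}{(1+\mu\lambda_n/\lambda_{n+1})^2}\|w_n-y_n\|^2$.
\item By \eqref{eq:sc08} together with $\lambda_n\ge\min\{\lambda_1,\mu/L\}$ from Lemma~\ref{lemma:Lipschitz_lambda_convergence}, the second term is bounded below by a constant times $\|y_n-x^\ast\|^2$.
\end{itemize}
Since $\lambda_n/\lambda_{n+1}\to1$, there is an $N$ such that for $n\ge N$ each of these coefficients is at least half of its limit value. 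Call the limit values $c_1=\frac{(2\gamma-\gamma^2)(1-\mu)^2}{(1+\mu)^2}$ and $c_2=2\nu\gamma\frac{1-\mu}{(1+\mu)^2}\min\{\lambda_1,\mu/L\}$. In \eqref{eq:sc08} I would choose $\lambda^\ast$ close enough to $\mu$ for this to hold. Then, using $\|w_n-x^\ast\|^2\le2\|w_n-y_n\|^2+2\|y_n-x^\ast\|^2$, I get for $n\ge N$
\[
\|u_n-x^\ast\|^2\le\Bigl(1-\tfrac14\min\{c_1,c_2\}\Bigr)\|w_n-x^\ast\|^2=\tau\|w_n-x^\ast\|^2 .
\]
This is exactly where the factor $\tfrac14$ in $\tau$ comes from.

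\textbf{Step 2 (two-step recursion).} By convexity of $\|\cdot\|^2$ (or directly from \eqref{eq:08a} with $\beta=0$, dropping the negative term),
\[
a_{n+1}\le(1-\alpha)a_n+\alpha\tau\|w_n-x^\ast\|^2 .
\]
Expanding $\|w_n-x^\ast\|^2$ via \eqref{eq:10} gives
\[
\|w_n-x^\ast\|^2=(1+\theta)a_n-\theta a_{n-1}+\theta(1+\theta)\|x_n-x_{n-1}\|^2 .
\]
The aim is an inequality $a_{n+1}\le p\,a_n+q\,a_{n-1}$ with $p,q\ge0$ and $p+q<1$; a rough target is $p\approx1-\alpha+\alpha\tau(1+\theta)$. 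Once this holds, standard reasoning gives R-linear decay: with $M_n:=\max\{a_n,a_{n-1}\}$ one gets $M_{n+1}\le\max\{p+q,1\}M_n$, and more precisely $a_n\le c\,\rho^{2n}$ with $\rho^2$ the positive root of $t^2=pt+q$. Taking square roots yields $\|x_n-x^\ast\|\le\sqrt c\,\rho^n$. The finitely many indices $n<N$ are absorbed into the constant $c$.

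\textbf{Main obstacle.} The difficulty is the cross term $\theta(1+\theta)\|x_n-x_{n-1}\|^2$ in Step 2, while matching the stated threshold $\theta<\frac1\tau-1$, i.e.\ $\tau(1+\theta)<1$. My first attempt would be a Lyapunov-type quantity as in Lemma~\ref{lemma:boundedness}, namely
\[
\Phi_n:=a_n-\alpha\tau\theta\,a_{n-1}+\kappa\|x_n-x_{n-1}\|^2 .
\]
I would use the identity $\|x_{n+1}-x_n\|^2=\alpha^2\|u_n-x_n\|^2$ and keep the term $-\alpha(1-\alpha)\|x_n-u_n\|^2$ from \eqref{eq:08a}, which by $\alpha<1/3$ dominates a multiple of $\|x_{n+1}-x_n\|^2$. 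The goal is then to show $\Phi_{n+1}\le r\,\Phi_n$ for some $r<1$ and that $\Phi_n\ge c'\,a_n$; the condition $\tau(1+\theta)<1$ is what should make $1-\alpha+\alpha\tau(1+\theta)<1$ the governing rate. If this bookkeeping fails, the fallback is the crude bound $\|x_n-x_{n-1}\|^2\le2a_n+2a_{n-1}$. That gives the recursion directly but under a possibly stronger restriction on $\theta$, and I would then check whether it is still implied by (A3$'$).
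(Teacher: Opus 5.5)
Your proposal follows the paper's proof: Step~1 is its derivation of $\|u_n-x^\ast\|^2\le\tau\|w_n-x^\ast\|^2$ for large $n$, with the factor $\tfrac14$ arising exactly as you describe, and the paper finishes Step~2 along your Lyapunov line, keeping $-\alpha(1-\alpha)\|x_n-u_n\|^2=-\tfrac{1-\alpha}{\alpha}\|x_{n+1}-x_n\|^2$, weakening $\tfrac{1-\alpha}{\alpha}$ to $1$ via $\alpha<\tfrac13$, discarding $-\alpha\tau\theta\|x_{n-1}-x^\ast\|^2$, and proving $b_{n+1}\le[1-\alpha(1-\tau(1+\theta))]\,b_n$ for $b_n=\|x_n-x^\ast\|^2+\|x_n-x_{n-1}\|^2$, which is where $\tau(1+\theta)<1$ enters. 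Your own bookkeeping also closes with $\kappa=1$: keeping the $-\alpha\tau\theta a_{n-1}$ term gives $\Phi_{n+1}\le(1-\alpha+\alpha\tau)a_n-\alpha\tau\theta a_{n-1}+\alpha\tau\theta(1+\theta)\|x_n-x_{n-1}\|^2\le(1-\alpha(1-\tau))\Phi_n$ (since $\alpha\tau\theta(1+\theta)<\tfrac23<1-\alpha$) and $\Phi_n\ge(1-2\alpha\tau\theta)a_n$, a slightly better rate that does not use $\tau(1+\theta)<1$, so the fallback via $\|x_n-x_{n-1}\|^2\le 2a_n+2a_{n-1}$, which would need the strictly stronger condition $\tau(1+2\theta)^2<1$ not implied by (A3$'$), is never needed.
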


\begin{proof}
Let $x^\ast\in \Omega$ be the unique solution of problem \eqref{mainproblem}. 
Then, we have $-\lambda_{n}B(x^\ast)\in \lambda_{n}A(x^\ast)$.  By repeating the same arguments used in \eqref{eq:sc0.0}–\eqref{eq:sc2.2} we get

\begin{equation}\label{eq:32.1}
\|u_{n}-x^\ast\|^2\leq \|w_{n}-x^\ast\|^{2}- \frac{(2-\gamma)}{\gamma}\, \| u_n - w_n \|^2 -2\gamma\, \nu\,\eta_{n}\,\lambda_{n}\|y_{n}-x^\ast\|^{2}.
\end{equation}
On the other hand, from the definition of $\eta_{n}$ in {\eqref{linearetan}}, \eqref{eq:25} and \eqref{eq:26}, we have
\begin{equation}\label{eq:33}
   \eta_{n} = \frac{\langle w_{n} - y_{n},\ d_{n} \rangle}{\| d_{n} \|^{2}} 
\geq\frac{\left( 1 - \mu \frac{\lambda_{n}}{\lambda_{n+1}} \right)}{\left( 1 + \mu \frac{\lambda_{n}}{\lambda_{n+1}} \right)^{2}}.
\end{equation}
By using \eqref{eq:27} and \eqref{eq:33} in \eqref{eq:32.1}, we get

\begin{align*}
\| u_{n} - x^\ast \|^{2} 
\leq \| w_{n} - x^\ast \|^{2} 
- \gamma(2-\gamma)\,
\frac{\left(1- \mu\frac{\lambda_{n}}{\lambda_{n+1}} \right)^{2} }{
\left( 1 + \mu\frac{\lambda_{n}}{\lambda_{n+1}} \right) ^{2}} 
\| w_{n} - y_{n} \|^{2}
- 2\, \nu\, \gamma\, \lambda_{n} \frac{\left(1- \frac{ \mu\lambda_{n}}{\lambda_{n+1}} \right) }{
\left( 1 + \frac{\mu\lambda_{n}}{\lambda_{n+1}} \right) ^{2}} 
 \| y_{n} - x^\ast \|^{2}.
\end{align*}
From Lemma \ref{lemma:Lipschitz_lambda_convergence}, we have
$$
\lim\limits_{n \to \infty} 
\gamma(2-\gamma) 
\frac{\left(1- \mu\frac{ \lambda_{n}}{\lambda_{n+1}} \right)^{2} }{
\left( 1 + \mu\frac{\lambda_{n}}{\lambda_{n+1}} \right) ^{2}} 
= (2\gamma - \gamma^2)\frac{(1 - \mu)^2}{(1 + \mu)^2}$$
and
{
$$
\lim\limits_{n \to \infty} 2\,\nu\, \gamma\,
  \lambda_{n} \frac{\left(1- \mu\frac{\lambda_{n}}{\lambda_{n+1}} \right) }{
\left( 1 + \mu\frac{\lambda_{n}}{\lambda_{n+1}} \right) ^{2}}
= 2\,\nu \gamma \, \frac{1 - \mu}{(1 + \mu)^2}
\, \lambda \geq 2\,\nu\gamma\frac{1-\mu}{(1+\mu)^{2}}\min\left\{ \lambda_1, \frac{\mu}{L} \right\}.$$
Therefore there exists  $N\in\mathbb{N}$ such that for all $n>N$, we have
\begin{align}\label{eq:34}
    \| u_n - x^\ast \|^2 &\leq \| w_n - x^\ast\|^2 -\frac{1}{2}\min\left\{
\frac{(2\gamma - \gamma^2)(1 - \mu)^2}{(1 + \mu)^2}, \;
2\,\nu \gamma\frac{1 - \mu}{(1 + \mu)^2}\min\left\{ \lambda_1, \frac{\mu}{L} \right\}
\right\}\left(\|w_{n}-y_{n}\|^2+\|y_{n}-x^\ast\|^2\right) \notag\\
   &\leq \|w_{n}-x^\ast\|^2 - \frac{1}{4}\min\left\{
\frac{(2\gamma - \gamma^2)(1 - \mu)^2}{(1 + \mu)^2}, \;
2\,\nu \gamma\frac{1 - \mu}{(1 + \mu)^2}\min\left\{ \lambda_1, \frac{\mu}{L} \right\}
\right\}\| w_n - x^\ast\|^2\notag\\
&= \left[1 - \frac{1}{4}\min\left\{
\frac{(2\gamma - \gamma^2)(1 - \mu)^2}{(1 + \mu)^2}, \;
2\,\nu \gamma\frac{1 - \mu}{(1 + \mu)^2}\min\left\{ \lambda_1, \frac{\mu}{L} \right\}
\right\}\right] \| w_n - x^\ast\|^2\notag\\
&=\tau \, \| w_n - x^\ast \|^2.
\end{align}
}
Using \eqref{eq:10}, \eqref{eq:34} in \eqref{lineardefxn}, we get
\begin{align}
\|x_{n+1} - x^\ast\|^2 
&= \|(1 - \alpha)(x_n - x^\ast) + \alpha(u_n - x^\ast)\|^2 \nonumber \\
&= (1 - \alpha)\|x_n - x^\ast\|^2 + \alpha\|u_n - x^\ast\|^2 - \alpha(1 - \alpha)\|x_n - u_n\|^2 \nonumber \\
&\leq (1 - \alpha)\|x_n - x^\ast\|^2 + \alpha\, \tau \|w_n - x^\ast\|^2 - \alpha(1 - \alpha)\|x_n - u_n\|^2 \nonumber \\
&\leq (1 - \alpha)\|x_n - x^\ast\|^2 - \frac{1 - \alpha}{\alpha} \|x_{n+1} - x_n\|^2 \nonumber \\
&\quad + \alpha\, \tau \left[ (1 + \theta) \|x_n - x^\ast\|^2 - \theta \|x_{n-1} - x^\ast\|^2 + \theta(1 + \theta) \|x_n - x_{n-1}\|^2 \right] \nonumber \\
&\leq \left[1 - \alpha(1 - \tau(1 + \theta)) \right]\, \|x_n - x^\ast\|^2 - \alpha\, \tau\, \theta \|x_{n-1} - x^\ast\|^2 \nonumber \\
&\quad + \alpha\, \tau\, \theta(1 + \theta) \|x_n - x_{n-1}\|^2 
- \frac{1 - \alpha}{\alpha}\, \|x_{n+1} - x_n\|^2 ,
\end{align}
which implies that
\begin{align}\label{eq:36}
\|x_{n+1} - x^\ast\|^2 + \frac{1 - \alpha}{\alpha} \|x_{n+1} - x_n\|^2 
&\leq \left[1 - \alpha(1 - \tau(1 + \theta)) \right] \|x_n - x^\ast\|^2 \\
&\quad - \alpha \tau \theta \|x_{n-1} - x^\ast\|^2 
+ \alpha \tau \theta(1 + \theta) \|x_n - x_{n-1}\|^2.
\end{align}
Since $0 < \alpha < \tfrac{1}{3}$, we have from \eqref{eq:36} that
\begin{align}
\|x_{n+1} - x^\ast\|^2 + \|x_{n+1} - x_n\|^2 
&\leq \left[1 - \alpha(1 - \tau(1 + \theta))\right] \|x_n - x^\ast\|^2 
+ \alpha \tau \theta(1 + \theta)\|x_n - x_{n-1}\|^2 \nonumber \\
&\leq \left[1 - \alpha(1 - \tau(1 + \theta))\right]
\left[\|x_n - x^\ast\|^2 
+ \frac{\alpha \tau \theta(1 + \theta)}{1 - \alpha(1 - \tau(1 + \theta))}
    \|x_n - x_{n-1}\|^2 \right]. \nonumber
\end{align}
Using the fact that
\[
\frac{\alpha \tau \theta(1 + \theta)}{1 - \alpha(1 - \tau(1 + \theta))} < 1,
\]
we obtain
\begin{equation}\label{eq:37}
\|x_{n+1} - x^\ast\|^2 + \|x_{n+1} - x_n\|^2
\leq \left[1 - \alpha(1 - \tau(1 + \theta))\right]
\left[\|x_n - x^\ast\|^2 + \|x_n - x_{n-1}\|^2\right].
\end{equation}

{From (A3$'$), $\tau(1+\theta) < 1$, so $0 < 1 - \tau(1+\theta) < 1$. With $\alpha \in (0, 1/3)$ from (A4$'$), we obtain $0 < \alpha(1 - \tau(1+\theta)) < 1$, hence $0 < 1 - \alpha(1 - \tau(1+\theta)) < 1$.}

Now, define
$
b_n := \|x_n - x^\ast\|^2 + \|x_n - x_{n-1}\|^2, \quad \forall n \geq 1.
$
Then from~\eqref{eq:37}, we obtain
$
b_{n+1} \leq \left[1 - \alpha(1 - \tau(1 + \theta))\right] b_n.
$
By induction, it follows that
$$
b_{n+1} \leq \left[1 - \alpha(1 - \tau(1 + \theta))\right]^n b_1.
$$
Therefore, by the definition of $b_n$,
$$
\|x_{n+1} - x^\ast\|^2 \leq \left[1 - \alpha(1 - \tau(1 + \theta))\right]^n b_1.
$$
This concludes the proof.
\end{proof}

{
\subsection{Strong Convergence Without Strong Monotonicity}\label{S:halpern}

In this subsection, we introduce a variant of Algorithm~\ref{algo:IPCMAS1}  that achieves strong convergence without requiring strong monotonicity of~$A$.

\begin{assumption}\label{assumption:halpern}
In addition to Assumption~\ref{assumption}~\textup{(A1)--(A3)}, suppose that
\begin{itemize}
  \item[\textnormal{(A4$^{\prime\prime}$)}] The sequences $\{\alpha_{n}\}, \{\sigma_{n}\} \subset (0,1)$ satisfy
  \(
  \alpha_{n} \to 0,
  ~
  \sum\limits_{n=1}^{\infty} \alpha_n = +\infty,
  \)
  and $\{\sigma_{n}\}\subseteq(a, b)\subset(0, 1-\alpha_{n})$ for some $a, b>0$.

  \item[\textnormal{(A5$^{\prime\prime}$)}] The sequences  $\{\varepsilon_n\}$ and $\{\varepsilon'_n\}$   are in $(0,\infty)$ such that $\varepsilon_n\leq \varepsilon'_n$ and
$\sum\limits_{n=1}^{\infty}\frac{\varepsilon'_{n}}{\alpha_{n}}<+\infty.$

  \item[\textnormal{(A6$^{\prime\prime}$)}] The constants $\bar\beta, \bar\theta$ satisfy $0 \leq \bar\beta \leq \bar\theta \leq 1$.
\end{itemize}
\end{assumption}

For each $n\ge 1$, define the adaptive inertial parameters by
\begin{equation}\label{eq:newbeta'n}
       \beta_n'=
\begin{cases}
\min\left\{\bar\beta,\dfrac{\varepsilon_n}{\|x_n-x_{n-1}\|},\, \dfrac{\varepsilon_n}{\|x_n-x_{n-1}\|^2}\right\},
& \text{if }x_n\neq x_{n-1},\\[2ex]
\bar\beta, & \text{otherwise},
\end{cases} 
\end{equation}
and
\begin{equation}\label{eq:newthetan}
    \theta_n=
\begin{cases}
\min\left\{\bar\theta,\dfrac{\varepsilon'_n}{\|x_n-x_{n-1}\|}\,, \dfrac{\varepsilon'_n}{\|x_n-x_{n-1}\|^2 }\right\},
& \text{if }x_n\neq x_{n-1},\\[2ex]
\bar\theta, & \text{otherwise}.
\end{cases}
\end{equation}
Since \(\varepsilon_n \leq \varepsilon'_n\) and \(\bar\beta \leq \bar\theta\), it follows from \eqref{eq:newbeta'n} and \eqref{eq:newthetan} that
\(\beta'_n \leq \theta_n\). Also, we have 
\[
\frac{\beta'_{n}}{\alpha_{n}}\|x_{n}-x_{n-1}\|\leq \frac{\varepsilon_{n}}{\alpha_{n}}\leq\frac{\varepsilon'_{n}}{\alpha_{n}}
\quad \mbox{and} \quad 
\frac{\beta'_{n}}{\alpha_{n}}\|x_{n}-x_{n-1}\|^{2}\leq \frac{\varepsilon_{n}}{\alpha_{n}}\ \leq\frac{\varepsilon'_{n}}{\alpha_{n}},
\]
which, together with \((A5^{\prime\prime})\), implies that
\begin{equation}\label{eq:sumauxeq}
   \sum\limits_{n=1}^{\infty}\frac{\beta'_{n}}{\alpha_{n}}\|x_{n}-x_{n-1}\|<+\infty
   \quad \mbox{and} \quad
   \sum\limits_{n=1}^{\infty}\frac{\beta'_{n}}{\alpha_{n}}\|x_{n}-x_{n-1}\|^2<+\infty.
\end{equation}
Consequently,
\begin{equation}\label{eq:fraclimauxeq}
   \lim_{n\to\infty}\frac{\beta'_{n}}{\alpha_{n}}\|x_{n}-x_{n-1}\|=0
   \quad \mbox{and} \quad
   \lim_{n\to\infty}\frac{\beta'_{n}}{\alpha_{n}}\|x_{n}-x_{n-1}\|^2=0.
\end{equation}
Moreover, since \(\alpha_{n}\in(0,1)\), we also have
\begin{equation}\label{eq:limauxeq}
    \lim_{n\to\infty}\beta'_{n}\|x_{n}-x_{n-1}\|=0
    \quad \mbox{and} \quad
    \lim_{n\to\infty}\beta'_{n}\|x_{n}-x_{n-1}\|^2=0.
\end{equation}
By following the above argument and \eqref{eq:newthetan}, we also have
\begin{equation}\label{eq:88}
   \sum\limits_{n=1}^{\infty}\frac{\theta_{n}}{\alpha_{n}}\|x_{n}-x_{n-1}\|<+\infty
   \quad \mbox{and} \quad
   \sum\limits_{n=1}^{\infty}\frac{\theta_{n}}{\alpha_{n}}\|x_{n}-x_{n-1}\|^{2}<+\infty,
\end{equation}
 and hence
\begin{equation}\label{eq:99}
   \lim_{n\to\infty}\frac{\theta_{n}}{\alpha_{n}}\|x_{n}-x_{n-1}\|=0
   \quad \mbox{and} \quad
   \lim_{n\to\infty}\frac{\theta_{n}}{\alpha_{n}}\|x_{n}-x_{n-1}\|^2=0.
\end{equation}
Since \(\alpha_{n}\in(0,1)\), we also have
\begin{equation}\label{eq:100}
    \lim_{n\to\infty}{\theta_{n}}\|x_{n}-x_{n-1}\|=0
    \quad \mbox{and} \quad
    \lim_{n\to\infty}\theta_{n}\|x_{n}-x_{n-1}\|^2=0.
\end{equation}
\begin{algorithm}[H]
\begin{footnotesize}
\begin{description}
  \item[Step 0. ]
    Choose $\mu \in (0,1)$, $\gamma \in (0,2)$, $\lambda_1 > 0$, and parameters satisfying Assumption~\ref{assumption:halpern}.
    Choose starting points $x_0, x_1 \in \mathscr{H}$ and set $n := 1$.

  \item[Step 1.] Compute
  \begin{equation}\label{newhalpern:znwnyn}
\begin{cases}
z_n = x_n + \beta_n'(x_n - x_{n-1}),\\[0.3em]
w_n = x_n + \theta_n(x_n - x_{n-1}),\\[0.3em]
y_n = J^A_{\lambda_n}(w_n - \lambda_n B(w_n)).
\end{cases}
\end{equation}
    If $y_n = w_n$, then stop; $y_n \in \Omega$. Otherwise go to Step~2.

  \item[Step 2.] Compute $u_n = w_n - \gamma\,\eta_n\,d_n$, where $d_n$ and $\eta_n$ are as in \eqref{defdn}--\eqref{defrhon}.

  \item[Step 3.] Compute
    \begin{equation}\label{mann:xn}
      x_{n+1} = (1-\alpha_{n}-\sigma_{n})\, z_n + \sigma_n\, u_n.
    \end{equation}

  \item[Step 4.] Update $\lambda_{n+1}$ as in \eqref{deflambda}. Set $n := n+1$ and go to Step~1.
\end{description}
\caption{Double Inertial PCM with Implicit Contraction (\algoDIM\!\!)}
\label{algo:DIPCM}
\end{footnotesize}
\end{algorithm}

\begin{theorem}\label{thm:halpern-strong}
Suppose that Assumption~{\rm\ref{assumption:halpern}} holds, and let $\{x_n\}$ be the sequence generated by Algorithm~{\rm\ref{algo:DIPCM}}. Then $\{x_n\}$ converges strongly to
$
x^* = P_{\Omega}(0).
$
\end{theorem}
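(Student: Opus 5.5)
Set $x^\ast=P_\Omega(\mathbf{0})$; $\Omega$ is closed and convex because $A+B$ is maximal monotone by Lemma~\ref{lemma:sum_maximal_monotone}. The plan is the standard Halpern-type scheme: prove boundedness, derive a recursion of the form in Lemma~\ref{lmm:convergence} for $a_n=\|x_n-x^\ast\|^2$, and verify the subsequence condition through demiclosedness.

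\textbf{Step 1 (boundedness).} Inequality \eqref{eq:007} holds verbatim for \algoDIM, so $\|u_n-x^\ast\|\le\|w_n-x^\ast\|$. Write
\[
x_{n+1}-x^\ast=(1-\alpha_n-\sigma_n)(z_n-x^\ast)+\sigma_n(u_n-x^\ast)-\alpha_n x^\ast .
\]
Then
\[
\|x_{n+1}-x^\ast\|\le(1-\alpha_n)\|x_n-x^\ast\|+\alpha_n\Bigl(\|x^\ast\|+\tfrac{\beta_n'+\theta_n}{\alpha_n}\|x_n-x_{n-1}\|\Bigr).
\]
By \eqref{eq:fraclimauxeq} and \eqref{eq:99}, the bracket is bounded by some $M$. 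Induction gives $\|x_n-x^\ast\|\le\max\{\|x_1-x^\ast\|,M\}$, so $\{x_n\}$, $\{z_n\}$, $\{w_n\}$, $\{u_n\}$ and $\{y_n\}$ are all bounded.

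\textbf{Step 2 (key recursion).} Set $v_n:=\frac{1-\alpha_n-\sigma_n}{1-\alpha_n}z_n+\frac{\sigma_n}{1-\alpha_n}u_n$. Then $x_{n+1}=(1-\alpha_n)v_n+\alpha_n\cdot\mathbf{0}$. Lemma~\ref{lemma:inner_product_identity}(iii) gives
\[
\|x_{n+1}-x^\ast\|^2\le(1-\alpha_n)^2\|v_n-x^\ast\|^2+2\alpha_n\langle -x^\ast,x_{n+1}-x^\ast\rangle .
\]
Lemma~\ref{lemma:inner_product_identity}(ii) together with \eqref{eq:007} gives
\[
\|v_n-x^\ast\|^2\le \|z_n-x^\ast\|^2\vee\|w_n-x^\ast\|^2-\frac{\sigma_n(1-\alpha_n-\sigma_n)}{(1-\alpha_n)^2}\|z_n-u_n\|^2-\frac{\sigma_n}{1-\alpha_n}\frac{2-\gamma}{\gamma}\|u_n-w_n\|^2 .
\]
Expanding $\|z_n-x^\ast\|^2$ and $\|w_n-x^\ast\|^2$ gives $\|x_n-x^\ast\|^2+\theta_n\|x_n-x_{n-1}\|\cdot M'$, using $\beta_n'\le\theta_n$. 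We therefore obtain
\[
a_{n+1}\le(1-\alpha_n)a_n+\alpha_n b_n-(1-\alpha_n)c\,\bigl(\|z_n-u_n\|^2+\|u_n-w_n\|^2\bigr),
\]
where $b_n:=2\langle -x^\ast,x_{n+1}-x^\ast\rangle+\frac{\theta_n}{\alpha_n}\|x_n-x_{n-1}\|M'$. Here $c>0$ comes from $\sigma_n\in(a,b)$ and from $1-\alpha_n-\sigma_n$ being bounded away from $0$ for large $n$, since $\alpha_n\to0$.

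\textbf{Step 3 (subsequence condition of Lemma~\ref{lmm:convergence}).} Take a subsequence with $\liminf_k(a_{n_k+1}-a_{n_k})\ge0$. The recursion, together with $\alpha_n\to0$ and the boundedness of $b_n$, forces $\|z_{n_k}-u_{n_k}\|\to0$ and $\|u_{n_k}-w_{n_k}\|\to0$. By \eqref{eq:100}, $\|w_n-x_n\|\to0$ and $\|z_n-x_n\|\to0$. By \eqref{eq:27}, with $\lambda_n\to\lambda>0$ from Lemma~\ref{lemma:Lipschitz_lambda_convergence}, we get $\|w_{n_k}-y_{n_k}\|\to0$. It follows that
\[
\|x_{n_k+1}-x_{n_k}\|\le\alpha_{n_k}\|z_{n_k}\|+\sigma_{n_k}\|u_{n_k}-z_{n_k}\|+\|z_{n_k}-x_{n_k}\|\to0 .
\]
Pass to a further subsequence achieving $\limsup_k\langle -x^\ast,x_{n_k}-x^\ast\rangle$ and converging weakly to some $p$. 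The graph/maximal-monotonicity argument from the proof of Theorem~\ref{th:weakconv} shows $p\in\Omega$. The projection characterization then gives $\langle \mathbf{0}-x^\ast,p-x^\ast\rangle\le0$. Combined with $x_{n_k+1}-x_{n_k}\to0$ and the vanishing inertial term, this yields $\limsup_k b_{n_k}\le0$. Lemma~\ref{lmm:convergence} then gives $a_n\to0$.

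\textbf{Main obstacle.} The delicate part is Step 2: obtaining the negative terms with constants that are uniform in $n$, given the three-term convex combination. This requires $1-\alpha_n-\sigma_n\ge 1-\alpha_n-b>0$ eventually; if the stated condition $(a,b)\subset(0,1-\alpha_n)$ is read literally for all $n$, this holds since $\alpha_n\to0$ gives $b<1$. It also requires handling the cross term $\alpha_n\langle -x^\ast,\cdot\rangle$ evaluated at $x_{n+1}$ rather than at $x_n$, which the estimate on $\|x_{n_k+1}-x_{n_k}\|$ resolves.
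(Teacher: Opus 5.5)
Your proposal is correct and follows essentially the paper's route. Both arguments establish boundedness, derive a Halpern-type recursion $a_{n+1}\le(1-\alpha_n)a_n+\alpha_n b_n$ and feed it into Lemma~\ref{lmm:convergence}. The subsequence condition is then checked the same way: extract $\|u_{n_k}-w_{n_k}\|\to 0$, use \eqref{eq:27} to get $\|w_{n_k}-y_{n_k}\|\to 0$, and apply the graph argument of Theorem~\ref{th:weakconv} with the projection characterization of $P_\Omega(\mathbf{0})$. The differences are minor: your boundedness step is a direct induction on $\|x_n-x^\ast\|$, which is cleaner than the paper's per-index case split on the max bound, and you collect the inertial errors into the single term $\frac{\theta_n}{\alpha_n}\|x_n-x_{n-1}\|M'$ rather than the paper's $[\,\cdot\,]_+$ and squared-difference terms.
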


\begin{proof}

\textbf{Claim 1.} The sequence $\{x_{n}\}$ is bounded.

It follows from the definition of $z_{n}$  in \eqref{newhalpern:znwnyn}
\begin{align}\label{eq:zn-est-halpern}
\|z_n-x^*\|^2
&=\|x_n+\beta_n'(x_n-x_{n-1})-x^*\|^2 \notag\\
&=\|x_{n}-x^*\|^{2}+\beta_n'^{2}\|x_n-x_{n-1}\|^2+2\beta_{n}'\left\langle x_n-x^*,\, x_n-x_{n-1}\right\rangle\notag\\
&=\|x_{n}-x^*\|^{2}+\beta_n'^{2}\|x_n-x_{n-1}\|^2+\beta_{n}'\left(\|x_{n}-x^*\|^2+\|x_{n}-x_{n-1}\|^2-\|x_{n-1}-x^*\|^2\right)\notag\\
&=\|x_n-x^*\|^2
+\beta_n'\bigl(\|x_n-x^*\|^2-\|x_{n-1}-x^*\|^2\bigr)
+\beta_n'(1+\beta_n')\|x_n-x_{n-1}\|^2. 
\end{align}
Similarly,
\begin{align}
\|w_n-x^*\|^2
&=\|x_n+\theta_n(x_n-x_{n-1})-x^*)\|^2 \notag\\
&=\|x_n-x^*\|^2
+\theta_n\bigl(\|x_n-x^*\|^2-\|x_{n-1}-x^*\|^2\bigr)
+\theta_n(1+\theta_n)\|x_n-x_{n-1}\|^2. \label{eq:wn-est-halpern}
\end{align}

On combining \eqref{eq:27} and \eqref{eq:wn-est-halpern}, we obtain
\begin{align}
\|u_n-x^*\|^2&
\leq\| w_n - x^\ast \|^2
-\frac{(2-\gamma)}{\gamma}\|u_n-w_n\|^2\notag\\
&\le \|x_n-x^*\|^2
+\theta_n\bigl(\|x_n-x^*\|^2-\|x_{n-1}-x^*\|^2\bigr)
+\theta_n(1+\theta_n)\|x_n-x_{n-1}\|^2\notag\\
&\quad
-\frac{(2-\gamma)}{\gamma}\|u_n-w_n\|^2 . \label{eq:un-est-halpern}
\end{align}

It follows from the definition of \(x_{n+1}\) in \eqref{mann:xn}, together with
\eqref{eq:zn-est-halpern} and \eqref{eq:un-est-halpern}, that
\begin{align}
\|x_{n+1}-x^*\|^2
&=\|(1-\alpha_n-\sigma_n)z_n+\sigma_n u_n-x^*\|^2 \notag\\
&=\|(1-\alpha_n-\sigma_n)(z_n-x^*)+\sigma_n(u_n-x^*)-\alpha_n x^*\|^2 \notag\\
&\le (1-\alpha_n-\sigma_n)\|z_n-x^*\|^2
   +\sigma_n\|u_n-x^*\|^2
   +\alpha_n\|x^*\|^2 \notag\\
&\le (1-\alpha_n-\sigma_n)\Bigl[\|x_n-x^*\|^2
+\beta_n'\bigl(\|x_n-x^*\|^2-\|x_{n-1}-x^*\|^2\bigr)
+\beta_n'(1+\beta_n')\|x_n-x_{n-1}\|^2\Bigr] \notag\\
&\quad +\sigma_n\Bigl[\|x_n-x^*\|^2
+\theta_n\bigl(\|x_n-x^*\|^2-\|x_{n-1}-x^*\|^2\bigr)
+\theta_n(1+\theta_n)\|x_n-x_{n-1}\|^2\notag\\\
&\quad-\frac{2-\gamma}{\gamma}\|u_n-w_n\|^2\Bigr]
+\alpha_n\|x^*\|^2 \notag\\
&= (1-\alpha_n)\|x_n-x^*\|^2  +\Bigl[(1-\alpha_n-\sigma_n)\beta_n'+\sigma_n\theta_n\Bigr]
\bigl(\|x_n-x^*\|^2-\|x_{n-1}-x^*\|^2\bigr) \notag\\
&\quad +\Bigl[(1-\alpha_n-\sigma_n)\beta_n'(1+\beta_n')
+\sigma_n\theta_n(1+\theta_n)\Bigr]\|x_n-x_{n-1}\|^2 \notag\\
& -\frac{2-\gamma}{\gamma}\sigma_n\|u_n-w_n\|^2
+\alpha_n\|x^*\|^2 .
\label{eq:au1-new}
\end{align}

Since \(0\le \beta_n'\le \theta_n\), we have
\(
(1-\alpha_n-\sigma_n)\beta_n'(1+\beta_n')
\le (1-\alpha_n-\sigma_n)\theta_n(1+\theta_n),
\)
and hence
\begin{align}\label{eq:firstupperbdd}
(1-\alpha_n-\sigma_n)\beta_n'(1+\beta_n')
+\sigma_n\theta_n(1+\theta_n) 
&\le
(1-\alpha_n-\sigma_n)\theta_n(1+\theta_n)
+\sigma_n\theta_n(1+\theta_n)\notag \\
&=(1-\alpha_n)\theta_n(1+\theta_n)\notag\\
&\leq2\,\theta_{n}\,(1-\alpha_{n}).
\end{align}
Moreover, since
$
(1-\alpha_n-\sigma_n)\beta_n'+\sigma_n\theta_n\ge 0
$ and $\beta'_{n}\leq\theta_{n}$, we have
\begin{align}\label{eq:secondupprbdd}
    &\bigl[(1-\alpha_n-\sigma_n)\beta_n'+\sigma_n\theta_n\bigr]
\bigl(\|x_n-x^*\|^2-\|x_{n-1}-x^*\|^2\bigr)\notag\\
&\quad\leq\bigl[(1-\alpha_n-\sigma_n)\beta_n'+\sigma_n\theta_n\bigr]\,\bigl[\|x_n-x^*\|^2-\|x_{n-1}-x^*\|^2\bigr]_+\notag\\
&\quad\le
(1-\alpha_n)\theta_n
\bigl[\|x_n-x^*\|^2-\|x_{n-1}-x^*\|^2\bigr]_+.
\end{align}
Substituting these estimates into \eqref{eq:au1-new}, we obtain
\begin{align}
\|x_{n+1}-x^*\|^2
&\le (1-\alpha_n)\|x_n-x^*\|^2
 +(1-\alpha_n)\theta_n
\bigl[\|x_n-x^*\|^2-\|x_{n-1}-x^*\|^2\bigr]_+ \notag\\
&\quad +2\,\theta_n\,(1-\alpha_n)\|x_n-x_{n-1}\|^2
-\frac{2-\gamma}{\gamma}\sigma_n\|u_n-w_n\|^2
+\alpha_n\|x^*\|^2\label{eq:bdd1}\\
&\le (1-\alpha_n)\Bigl[
\|x_n-x^*\|^2
+\theta_n\bigl[\|x_n-x^*\|^2-\|x_{n-1}-x^*\|^2\bigr]_+
+2\,\theta_n\|x_n-x_{n-1}\|^2
\Bigr] \notag\\
&\quad+\alpha_n\|x^*\|^2 \notag\\
&\le
\max\Bigl\{
\|x^*\|^2,\,
\|x_n-x^*\|^2
+\theta_n\bigl[\|x_n-x^*\|^2-\|x_{n-1}-x^*\|^2\bigr]_+
+2\,\theta_n\|x_n-x_{n-1}\|^2
\Bigr\}.
\label{eq:bdd2}
\end{align}

We now consider two cases.

\medskip
\noindent
\textbf{Case I.}
Suppose that
\(
\|x_{n+1}-x^*\|^2\le \|x^*\|^2
\). Then, the sequence
\(\{\|x_n-x^*\|\}\) is bounded.  Hence \(\{x_n\}\) is bounded.

\medskip
\noindent
\textbf{Case II.}
Suppose that
\begin{align}
\|x_{n+1}-x^*\|^2
\le
\|x_n-x^*\|^2
+\theta_n\bigl[\|x_n-x^*\|^2-\|x_{n-1}-x^*\|^2\bigr]_+
+2\,\theta_{n}\|x_n-x_{n-1}\|^2.
\label{eq:case2}
\end{align}
 Since $\alpha_{n}\in(0,1)$, it follows from \eqref{eq:88} that $\sum\limits_{n=1}^{\infty}\theta_{n}\|x_{n}-x_{n-1}\|^{2}< +\infty$. Thus, from Lemma \ref{lemma:auxiliaryconvergence}, it follows that $\lim\limits_{n\to\infty}\|x_{n+1}-x^*\|$ exists and hence $\|x_{n+1}-x^*\|$ is bounded, consequently $\{x_{n}\}$ is bounded.

Therefore, in either case, the sequence \(\{x_n\}\) is bounded.

\textbf{Claim 2.} For all \(n\ge 1\), we prove that
\begin{align}\label{eq:claim2-halpern}
a\left(\frac{2-\gamma}{\gamma}\right)\|u_n-w_n\|^2
&\le \|x_n-x^*\|^2-\|x_{n+1}-x^*\|^2 +\theta_n\bigl[\|x_n-x^*\|^2-\|x_{n-1}-x^*\|^2\bigr]_+ \notag\\
&\quad +2\theta_n\|x_n-x_{n-1}\|^2+\alpha_n M_2,
\end{align}
where
$
M_2:=\sup_{n\ge 1}\left|\|x^*\|^2-\|x_n-x^*\|^2\right|<+\infty.
$

Indeed, since $\sigma_n\ge a$ and $\alpha_{n}\in(0,1)$, from \eqref{eq:bdd1} we obtain
\begin{align*}
a\left(\frac{2-\gamma}{\gamma}\right)\|u_n-w_n\|^2
&\le \sigma_n\left(\frac{2-\gamma}{\gamma}\right)\|u_n-w_n\|^2 \\
&\le \|x_n-x^*\|^2-\|x_{n+1}-x^*\|^2
+\theta_n\bigl[\|x_n-x^*\|^2-\|x_{n-1}-x^*\|^2\bigr]_+ \\
&\quad +2\theta_n\|x_n-x_{n-1}\|^2
+\alpha_n\bigl(\|x^*\|^2-\|x_n-x^*\|^2\bigr).
\end{align*}
which proves \eqref{eq:claim2-halpern}.

\textbf{Claim 3.} We prove that
\begin{align*}
\|x_{n+1}-x^*\|^2
&\le (1-\alpha_n)\|x_n-x^*\|^2
+\theta_n
\bigl[\|x_n-x^*\|^2-\|x_{n-1}-x^*\|^2\bigr]_{+}\notag\\
&\quad+ 2\,\theta_n
\|x_n-x_{n-1}\|^2
+ 2\,\alpha_n\, \langle x^*-x_{n+1},\, x^*\rangle \\
&= (1-\alpha_n)\|x_n-x^* \|^2+\alpha_n\,\Delta_{n},\quad \forall n\geq 1
\end{align*}
where $\Delta_n
:= \frac{\theta_n}{\alpha_n}
\Big[\|x_n-x^*\|^2-\|x_{n-1}-x^*\|^2\Big]_+
+ \frac{2\theta_{n}}{\alpha_n}\|x_n-x_{n-1}\|^2
+ 2\langle x^*-x_{n+1},\, x^*\rangle.$

From Lemma~\ref{lemma:inner_product_identity}(iii) and \eqref{eq:27}, \eqref{eq:zn-est-halpern}, \eqref{eq:wn-est-halpern}, we obtain
\begin{align}
\|x_{n+1}-x^*\|^2
&= \|(1-\alpha_n-\sigma_n)z_n + \sigma_n u_n - x^*\|^2 \notag\\
&= \left\|(1-\alpha_n-\sigma_n)z_n + \sigma_n u_n + \alpha_n x^* - x^*
+\alpha_n(-x^*)\right\|^2 \notag\\
&\le \|(1-\alpha_n-\sigma_n)z_n + \sigma_n u_n + \alpha_n x^* - x^*\|^2
- 2\alpha_n\langle x_{n+1}-x^*, x^*\rangle \notag\\
&\le \left\|(1-\alpha_n-\sigma_n)(z_n-x^*) + \sigma_n(u_n-x^*)+\alpha_{n}(x^*-x^*)\right\|^2
- 2\alpha_n\langle x_{n+1}-x^*, x^*\rangle \notag\\
&\le (1-\alpha_n-\sigma_n)\|z_n-x^*\|^2
+ \sigma_n\|u_n-x^*\|^2+\alpha_{n}\|x^*-x^*\|^{2}
- 2\alpha_n\langle x_{n+1}-x^*, x^*\rangle \notag\\
&\le (1-\alpha_n-\sigma_n)\Big[
\|x_n-x^*\|^2
+ \beta'_n(\|x_n-x^*\|^2 - \|x_{n-1}-x^*\|^2)_{+} +\beta'_n(1+\beta'_{n}) \|x_n-x_{n-1}\|^2
 \Big] \notag \\
 &\quad+\sigma_n \Big[
\|x_n-x^*\|^2
+ \theta_n(\|x_n-x^*\|^2 - \|x_{n-1}-x^*\|^2)_{+}+ \theta_n (1+\theta_{n})\|x_n-x_{n-1}\|^2 \Big]\notag\\
&\quad- 2\alpha_n \langle x_{n+1} - x^*, x^* \rangle-\sigma_{n}\left(\frac{2-\gamma}{\gamma}\right)\|u_n-w_n\|^2 \notag\\
&\le (1-\alpha_n)\|x_n-x^*\|^2+ \bigl[(1-\alpha_n-\sigma_n)\beta_n'+\sigma_n\theta_n\bigr]
[\|x_n-x^*\|^2 - \|x_{n-1}-x^*\|^2]_{+} \notag\\
&\quad + \Big((1-\alpha_n-\sigma_n)\beta_n'(1+\beta_n')
+\sigma_n\theta_n(1+\theta_n)
\Big)\|x_n-x_{n-1}\|^2 \notag\\
&\quad - 2\alpha_n \langle x_{n+1} - x^*, x^* \rangle -\sigma_{n}\left(\frac{2-\gamma}{\gamma}\right)\|u_n-w_n\|^2. \notag
\end{align}
Thus, taking into account that \(\gamma \in (0,2)\) and using \eqref{eq:firstupperbdd} and \eqref{eq:secondupprbdd}, we get
\begin{align}
    \|x_{n+1}-x^*\|^2
    &\le (1-\alpha_n)\|x_n-x^*\|^2 + (1-\alpha_{n})\,\theta_n(\|x_n-x^*\|^2 - \|x_{n-1}-x^*\|^2)_{+} \notag\\
&\quad + 2\,\theta_{n}(1-\alpha_{n})\,\|x_n-x_{n-1}\|^2  - 2\alpha_n \langle x_{n+1}-x^*, x^* \rangle\notag\\
&\le (1-\alpha_n)\|x_n-x^*\|^2
+ \theta_n(\|x_n-x^*\|^2 - \|x_{n-1}-x^*\|^2)_{+} + 2\theta_{n}\|x_n-x_{n-1}\|^2\notag\\
& \quad+2\alpha_n \langle x^*-x_{n+1}, x^* \rangle.
\end{align}

\textbf{Claim 4.} Finally, it remains to prove that $\{\|x_{n}-x^*\|\}$ converges to zero. To prove this, by Lemma \ref{lmm:convergence} it suffices to verify that
$$\limsup_{k\to\infty} \left(
\frac{\theta_{n_k}}{\alpha_{n_k}}
\Big[\|x_{n_k}-x^*\|^2-\|x_{n_k-1}-x^*\|^2\Big]_+
+ \frac{2\theta_{n_k}}{\alpha_{n_k}}\|x_{n_k}-x_{n_k-1}\|^2
+ 2\langle x^*-x_{n_k+1},\, x^*\rangle
\right)\leq0$$
for every subsequence $\{\|x_{n_{k}}-x^*\|^{2}\}$ of $\{\|x_{n}-x^*\|\}$ satisfying
$$\liminf_{k\to\infty}\big(\|x_{n_{k}+1}-x^*\|^{2}- \|x_{n_{k}}-x^*\|^{2}\big) \ge 0.$$  For this purpose suppose that $\{\|x_{n_{k}}-x^*\|^{2}\}$ is a subsequence of $\{\|x_{n}-x^*\|^{2}\}$ such that
$$\liminf_{k\to\infty}\big(\|x_{n_{k}+1}-x^*\|^{2}- \|x_{n_{k}}-x^*\|^{2}\big) \ge 0.$$   First note that
\begin{align*}
\frac{\theta_{n_k}}{\alpha_{n_k}}
\Big(\|x_{n_k}-x^*\|^2-\|x_{n_k-1}-x^*\|^2\Big)_{+}
&\leq\frac{\theta_{n_k}}{\alpha_{n_k}}
\Big(\Bigl|\|x_{n_k}-x^*\|^2-\|x_{n_k-1}-x^*\|^2\Bigr|\Big)\notag\\
&\leq
\frac{\theta_{n_k}}{\alpha_{n_k}}\|x_{n_k}-x_{n_k-1}\|^2
+ \frac{2\theta_{n_k}}{\alpha_{n_k}}\Bigl|\langle x_{n_k}-x^*,\;x_{n_k}-x_{n_k-1}\rangle\Bigr| \\ 
&\le \frac{\theta_{n_k}}{\alpha_{n_k}}\|x_{n_k}-x_{n_k-1}\|^2+ \frac{2\theta_{n_k}}{\alpha_{n_k}}\|x_{n_k}-x_{n_k-1}\|\,M_1,
\end{align*}
where $M_{1}:=\sup_{n\geq 0}\{\|x_{n}-x^*\|\}$, since $\{x_{n}\}$ is bounded. Hence, by~\eqref{eq:99}
it follows that
$
\lim\limits_{k\to\infty}\frac{\theta_{n_k}}{\alpha_{n_k}
}\Bigl(\|x_{n_k}-x^*\|^2-\|x_{n_k-1}-x^*\|^2\Bigr)_{+}=0.
$
Moreover, since \(\alpha_{n_k}\in (0,1)\), we have
$
\theta_{n_k}
\Bigl(\|x_{n_k}-x^*\|^2-\|x_{n_k-1}-x^*\|^2\Bigr)_{+}
\le
\frac{\theta_{n_k}}{\alpha_{n_k}}
\Bigl(\|x_{n_k}-x^*\|^2-\|x_{n_k-1}-x^*\|^2\Bigr)_{+}.
$
Thus,
\begin{equation}\label{e}
\lim_{k \to \infty} \theta_{n_k}
\Bigl(\|x_{n_k}-x^*\|^2-\|x_{n_k-1}-x^*\|^2\Bigr)_{+} = 0.
\end{equation}
Then, from Claim 2, \eqref{eq:limauxeq} and \eqref{e}, we obtain
\begin{align*}
a\,\left(\frac{2-\gamma}{\gamma}\right)\limsup_{k\to\infty}\|u_{n_{k}}-w_{n_{k}}\|^{2}&
\le
\limsup_{k\to\infty}
\Bigl(
\|x_{n_k}-x^*\|^2-\|x_{n_k+1}-x^*\|^2
\Bigr) \notag\\
&
+
\limsup_{k\to\infty}
\Bigl(
\theta_{n_k}
\bigl[\|x_{n_k}-x^*\|^2-\|x_{n_k-1}-x^*\|^2\bigr]_+
\Bigr) \\
&\quad+
\limsup_{k\to\infty}
\Bigl(
2\theta_{n_k}\|x_{n_k}-x_{n_k-1}\|^2
\Bigr)
+
\limsup_{k\to\infty} 2\alpha_{n_k}M_1 \notag\\
&=-\liminf_{k\to\infty}
\Bigl(
\|x_{n_k}-x^*\|^2-\|x_{n_k+1}-x^*\|^2
\Bigr) \notag\\
&\leq0.
\end{align*}
This implies that 
\begin{equation}\label{eq:1}
    \lim\limits_{n\to\infty}\|u_{n}-w_{n}\|=0.
\end{equation}
Thus, from \eqref{eq:27} and taking into account $\lim\limits_{n\to\infty}\lambda_{n}=\lambda$, we get
\begin{equation}\label{eq;limunwn}
    \lim\limits_{n\to\infty}\|w_{n}-y_{n}\|=0.
\end{equation}

On the other hand, from \eqref{eq:limauxeq},  \eqref{eq:100} and  \eqref{newhalpern:znwnyn},  we get

\begin{equation}
    \lim_{k\to\infty} \|z_{n_{k}} - x_{n_k}\|=
\lim_{k\to\infty} \beta_{n_{k}}'\|x_{n_{k}} - x_{n_k-1}\|
= 0.
\end{equation}
and 
\begin{equation}\label{eq:2}
 \lim_{k\to\infty} \|w_{n_{k}} - x_{n_k}\|
=
\lim_{k\to\infty} \theta_{n_{k}}\|x_{n_{k}} - x_{n_k-1}\|
= 0.   
\end{equation}
Thus, we have
\begin{equation}\label{eq:3}
   \|z_{n_{k}}-w_{n_{k}}\|\leq \|z_{n_{k}}-x_{n_{k}}\|+\|w_{n_{k}}-x_{n_{k}}\|\to 0. 
\end{equation}
It follows from \eqref{eq:1}, \eqref{eq:2} and \eqref{eq:3} that
\begin{align*}
\|x_{n_k+1}-x_{n_k}\|
&\le
\|x_{n_k+1}-z_{n_k}\|
+\|z_{n_k}-x_{n_k}\|\\
&\le
\sigma_{n_k}\|u_{n_k}-z_{n_k}\|+ \alpha_{n_k}\|z_{n_k}\|
+\|z_{n_k}-x_{n_k}\|\\
&\le
\sigma_{n_k}\bigl(\|u_{n_k}-w_{n_k}\|+\alpha_{n_k}\|z_{n_k}\|
+\|w_{n_k}-z_{n_k}\|\bigr)
+\|z_{n_k}-x_{n_k}\|
\to 0.
\end{align*}
Thus
\begin{equation}\label{eq:limxnk}
    \lim_{k\to\infty}\|x_{n_k+1}-x_{n_k}\|=0.
\end{equation}
Moreover, from \eqref{eq;limunwn} and \eqref{eq:2}, we obtain
\begin{align*}
\|x_{n_k}-y_{n_k}\|
&\le \|x_{n_k}-w_{n_k}\|+\|w_{n_k}-y_{n_k}\|
\to 0.
\end{align*}

Since $\{x_{n_k}\}$ is bounded, the scalar sequence $\{\langle x^*, x^* - x_{n_k}\rangle\}$ is also bounded. Choose a subsequence $\{x_{n_{k_j}}\}$ of $\{x_{n_k}\}$ along which
\[
\lim_{j\to\infty} \langle x^*, x^* - x_{n_{k_j}} \rangle
= \limsup_{k\to\infty} \langle x^*, x^* - x_{n_k} \rangle .
\]
Since $\{x_{n_{k_j}}\}$ is bounded, by passing to a further subsequence (without relabeling) we may assume $x_{n_{k_j}} \rightharpoonup \bar{x}$ for some $\bar{x} \in \mathscr{H}$. Hence
\begin{equation}\label{eq:mm}
    \limsup_{k\to\infty} \langle x^*, x^* - x_{n_k} \rangle
=
\lim_{j\to\infty} \langle x^*, x^* - x_{n_{k_j}} \rangle
=
\langle x^*, x^* - \bar{x}\rangle .
\end{equation}
Since $\lim\limits_{j\to\infty}\|x_{n_{k_j}}-w_{n_{k_j}}\|=0$, we have $w_{n_{k_j}}\rightharpoonup\bar{x}$. Moreover, $\lim\limits_{j\to\infty}\|x_{n_{k_j}}-y_{n_{k_j}}\| = 0$ together with the argument in the last paragraph of Theorem \ref{th:weakconv} implies that $\bar{x} \in \Omega$. Hence, by \eqref{eq:mm} and the definition of $x^* = P_{\Omega}(0)$, we have
\begin{equation}\label{eq:bb}
\limsup_{k\to\infty} \langle x^*, x^* - x_{n_k} \rangle
=
\langle x^*, x^* - \bar{x}\rangle \le 0.
\end{equation}
On combining \eqref{eq:limxnk} and \eqref{eq:bb}
\begin{equation}\label{eq:suplim}
\limsup_{k\to\infty} \langle x^*, x^* - x_{n_k+1} \rangle
\le
\limsup_{k\to\infty} \langle x^*, x^* - x_{n_k} \rangle
=
\langle x^*, x^* - \bar{x} \rangle
\le 0.
\end{equation}
Thus, from \eqref{eq:99}, \eqref{e} and \eqref{eq:suplim}, we get $\limsup_{k\to\infty} \Delta_{n_k} \leq0$. Hence, from Claim 3 and Lemma~\ref{lmm:convergence},
we  have $\lim\limits_{n\to\infty}\|x_{n}-x^*\|=0$. This completes the proof.
\end{proof}

}


\section{Numerical Experiments}\label{S:5}


{In this section, we present numerical experiments to evaluate the performance of the proposed
algorithms.} {The numerical study is organized as follows. Section~\ref{subsec:example1} (Example~1, classical VIP) compares all four algorithms --\algoDIM, \algoT, \algoD, and \algoS-- at varying problem dimensions and tolerances. Sections~\ref{subsec:example_2} (split feasibility, $\varepsilon = 10^{-3}$) and~\ref{subsec:elastic-net} (elastic net regularization, $\varepsilon = 10^{-6}$ with a $50{,}000$-iteration budget) carry out the same four-way comparison on these problem classes. Section~\ref{sub:r_linearity} returns to \algoT to verify its $R$-linear convergence on the (strongly monotone) elastic-net problem, the regime for which it is theoretically designed. Algorithm~\ref{algo:IPCMAS1} (\algoO\!\!) is excluded throughout: we consider only strong-convergence algorithms, and \algoO yields only weak convergence.}

All algorithms were implemented in \texttt{Julia 1.12.6} and executed on a
PC equipped with an Intel Core\texttrademark~i9-9900K processor (3.6~GHz) and 32~GB of RAM.

{
\begin{remark}[Parameter selection]\label{remark:implementation}
Table~\ref{tab:sensitivity} reports a sensitivity analysis of Algorithm~\ref{algo:DIPCM} on a variant of the variational inclusion problem of Example~\ref{subsec:example1} with $W \sim U(0,1)$ (giving $L \approx 50$), $N = 200$, $5$ instances, and $\varepsilon = 10^{-6}$. The two parameters swept are $\bar\beta$ (cap for $\beta'_n$) and $\bar\theta$ (cap for $\theta_n$), at fixed $\alpha_n = 1/(n+1)$, $\sigma_n = 0.8 - \alpha_n$, $\varepsilon_n = 5/(n+1)^{2.1}$, and $\varepsilon'_n = 10/(n+1)^{2.1}$. All $32$ configurations achieve $100\%$ convergence with iteration counts ranging from $23$ to $128$. The bound $\bar\theta$ dominates on average: column means are $72$ ($\bar\theta = 0.5$), $52$ ($\bar\theta = 0.7$), $46$ ($\bar\theta = 0.9$), and $50$ ($\bar\theta = 0.95$). Within $\bar\theta = 0.9$, $\bar\beta \in [0.0, 0.3]$ yields $42$--$47$ iterations. We select $\bar\beta = 0.3$, $\bar\theta = 0.9$ (mean $42 \pm 2$), in a region where iteration counts are insensitive to small parameter changes. We also verify robustness to the $\varepsilon$ scale at the chosen $(\bar\beta, \bar\theta)$ by multiplying $\varepsilon_n$ and $\varepsilon'_n$ by factors in $\{0.01, 0.1, 1, 10, 100\}$: iteration counts are $1128$, $531$, $42$, $60$, and $63$ respectively, so the default scale is near the optimum, with mild slowdown for over-scaling and substantial slowdown for under-scaling. We therefore use the following parameter set throughout Section~\ref{S:5}: $\bar\beta = 0.3$, $\bar\theta = 0.9$, $\gamma = 1.1$, $\mu = 0.5$, $\alpha_n = 1/(n+1)$, $\sigma_n = 0.8 - \alpha_n$, $\varepsilon_n = 5/(n+1)^{2.1}$, $\varepsilon'_n = 10/(n+1)^{2.1}$, $\xi_n = 100/(n+1)^{1.1}$, and $\lambda_1 = 1/(1.05\,L)$ (overridden to $\lambda_1 = 0.05$ in Sections~\ref{subsec:example_2} and~\ref{subsec:elastic-net}).
The same stopping criterion is used for all algorithms within each example.
\end{remark}
}

{
\begin{table}[!ht]
\centering
\caption{Sensitivity analysis of Algorithm~\ref{algo:DIPCM} (\algoDIM\!\!) on Example~\ref{subsec:example1} with $W \sim U(0,1)$, $N = 200$, $5$ instances, $\varepsilon = 10^{-6}$. Fixed: $\alpha_n = 1/(n+1)$, $\sigma_n = 0.8 - \alpha_n$, $\varepsilon_n = 5/(n+1)^{2.1}$, $\varepsilon'_n = 10/(n+1)^{2.1}$, $\gamma = 1.1$, $\mu = 0.5$, $\lambda_1 = 1/(1.05\,L)$, $\xi_n = 100/(n+1)^{1.1}$. Average iterations and standard deviation are reported.}
\label{tab:sensitivity}
\begin{tabular}{c|rr|rr|rr|rr}
\hline
& \multicolumn{2}{c|}{$\bar\theta = 0.5$} & \multicolumn{2}{c|}{$\bar\theta = 0.7$} & \multicolumn{2}{c|}{$\bar\theta = 0.9$} & \multicolumn{2}{c}{$\bar\theta = 0.95$} \\
$\bar\beta$ & Avg & Std & Avg & Std & Avg & Std & Avg & Std \\
\hline
0.00 & 128 &  3 & 56 & 3 & 42 & 2 & 45 & 2 \\
0.05 & 111 &  3 & 57 & 3 & 47 & 4 & 45 & 2 \\
0.10 &  95 &  3 & 54 & 2 & 47 & 4 & 42 & 1 \\
0.15 &  79 &  2 & 55 & 2 & 47 & 5 & 41 & 2 \\
0.20 &  60 &  3 & 52 & 3 & 43 & 2 & 50 & 10 \\
0.25 &  35 & 11 & 49 & 4 & 42 & 2 & 55 & 10 \\
0.30 &  23 &  9 & 49 & 3 & 42 & 2 & 59 & 1 \\
0.40 &  46 &  5 & 41 & 3 & 61 & 2 & 60 & 1 \\
\hline
\end{tabular}
\end{table}
}

We implement the algorithms on the following numerical examples.

\subsection[Example: Variational Inclusion Problem]{Example: Variational Inclusion Problem \cite{dey_hybrid_2023}}
\label{subsec:example1}

Consider the variational inclusion problem:
\begin{equation}
\text{Find } x^\ast \in \mathbb{R}^n \text{ such that } {\mathbf{0}} \in A x^\ast + B x^\ast.
\end{equation}
where $A \in \R^{N\times N}$ is an upper triangular matrix with all entries equal to one,
and $B(x) = W^\top W x$ where $W$ is an $N\times N$ matrix with entries drawn independently 
from the uniform distribution $U(1, 100)$.
It is clear that $A$ is maximal monotone operator with resolvent 
$J^{A}_{\lambda}(x) = (I + \lambda A)^{-1}x$, and the operator $B$ is monotone and 
Lipschitz continuous with Lipschitz constant $L=\max(\operatorname{eig}(B))$. 
{We compare \algoT and \algoDIM with \algoD and \algoS.}
The problem dimension is taken as $N \in \{100, 150, 200, 250, 300\}$ 
and the prescribed error tolerances are $\epsilon \in \{10^{-1}, 10^{-2}, 10^{-3}, 10^{-4}, 10^{-5}, 10^{-6}\}$. 
We first generate a collection of $5$ independent problem instances; 
each instance consists of a matrix $W$ drawn as above and initial points $x_0,x_1 \in [0,1]^n$ 
with components sampled independently from the uniform distribution on the interval $[0,1]$. 
{ All four algorithms are tested on the same $5$ instances to ensure a fair comparison.} 
All methods are terminated when the residual norm satisfies
\[
\|y_n - w_n\| < \varepsilon
\]
or when the maximum number of iterations, set to $50{,}000$, is reached, whichever occurs first. 
{The numerical results are reported for each $(N,\varepsilon)$ and for each algorithm. The entries show the average CPU time (in seconds) and the average number of iterations over the $5$ instances. If an algorithm fails to reach the prescribed tolerance within the iteration limit on all $5$ instances, the corresponding table entry is marked by ``$-$''.
The results for $N=100, 200$ and $300$ are given
in Tables \ref{tab:comparison_n100},\ref{tab:comparison_n200}, and \ref{tab:comparison_n300}.}

{The parameters of \algoDIM (Algorithm~\ref{algo:DIPCM}) are
\[
\begin{array}{lclclclcl}
      \bar\beta &=& 0.3, & \bar\theta &=& 0.9, & \sigma_n &=& 0.8 - \alpha_n,\\[2pt]
      \mu &=& 0.5, & \gamma &=& 1.1, & \alpha_n &=& \dfrac{1}{n+1},\\[2pt]
      \lambda_1 &=& \dfrac{1}{1.05\,L}, & \xi_n &=& \dfrac{100}{(n+1)^{1.1}}, & & & \\[2pt]
      \varepsilon_n &=& \dfrac{5}{(n+1)^{2.1}}, & \varepsilon'_n &=& \dfrac{10}{(n+1)^{2.1}}. & & &
\end{array}
\]}
For \algoT  (Algorithm~\ref{algo:IPCMAS2})  the parameters are
\[
\begin{array}{lclclclclclcl}
      \mu &=& 0.5, & \gamma &=&  1.1, &\alpha &=&0.25,&\lambda_1&=&\dfrac{1}{1.05\,L}\\
      \theta &=& 0.5, &\xi_n&=&\dfrac{100}{(n+1)^{1.1}}.\\
\end{array}
\]
For \algoD, the parameters chosen are
\[
\begin{array}{lclclclclclclcl}
      \alpha &=& 0.5, &\gamma&=&1.5,& \alpha_n &=&  \alpha, &\tau_n &=&\dfrac{1}{n^2},&\lambda_n&=&\dfrac{1}{1.05\,L}\\
      \beta_n &=&  \dfrac{1}{5n+1}, &\theta_n &=&0.8-\beta_n.\\
\end{array}
\]
{ For \algoS~\cite{Suantai2024}, the parameters are chosen as in~\cite{Suantai2024}:
\[
\begin{array}{lclclclclcl}
      \gamma &=& 0.1, &\mu&=&0.9,&\lambda_0&=&0.01,\\
      \eta_k &=& \frac{1}{(k+1)^2}, &\delta_k &=&\frac{1}{(5k+2)^3}.\\
\end{array}
\]
}
To compare the performance of the {four} algorithms across the entire set of test problems, we adopt the performance profiles introduced by Dolan and Moré~\cite{dolan_benchmarking_2001}. {Given a set of problems $\mathcal{P}$ and solvers $\mathcal{S}$, the performance profile $\rho_s(\tau)$ of solver $s$ plots the fraction of problems for which the performance ratio $r_{p,s} = t_{p,s}/\min_{s' \in \mathcal{S}} t_{p,s'}$ satisfies $r_{p,s} \leq \tau$, where $t_{p,s}$ is the performance metric (time or iterations) of solver $s$ on problem $p$. A solver whose curve lies higher and further to the left is preferred.} 
Figures~\ref{fig:time_profile} and~\ref{fig:iter_profile} plot the performance profiles with respect to the CPU time and the number of iterations, respectively.

\begin{figure}
    \centering
    \includegraphics[width=0.5\linewidth]{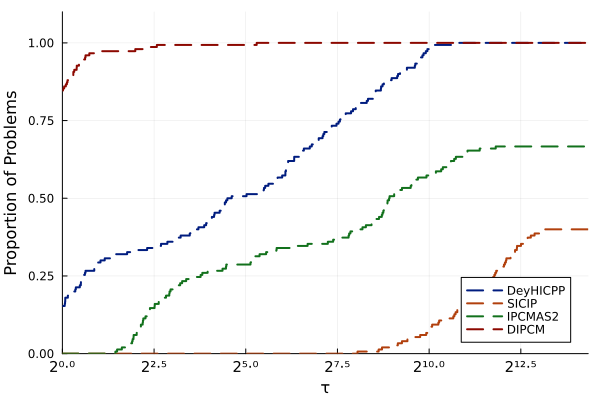}
    \caption{{Performance profile (CPU time) for Example~\ref{subsec:example1}: \algoDIM, \algoD, \algoS, \algoT over $5$ instances, $N \in \{100,\ldots,300\}$, $\varepsilon \in \{10^{-1},\ldots,10^{-6}\}$.}}
    \label{fig:time_profile}
\end{figure}

\begin{figure}
    \centering
    \includegraphics[width=0.5\linewidth]{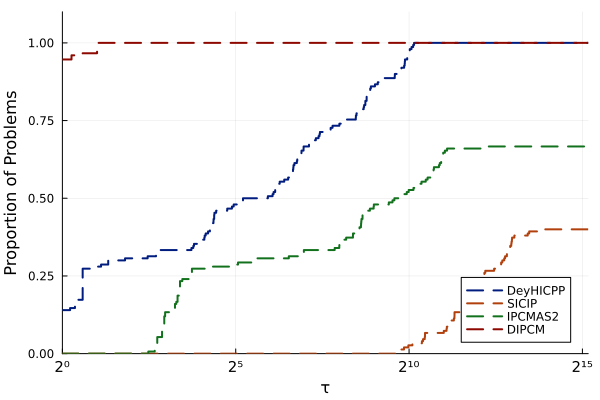}
    \caption{{Performance profile (iterations) for Example~\ref{subsec:example1}: same setup as Figure~\ref{fig:time_profile}.}}
    \label{fig:iter_profile}
\end{figure}

\begin{table}[htbp]
\centering
{
\caption{Median CPU time (seconds) and average iterations over 5 instances, $N=100$.}
\label{tab:comparison_n100}
\begin{tabular}{c*{4}{rr}}
\hline
\multicolumn{9}{c}{$N=100$}\\
\hline
  & \multicolumn{2}{c|}{\textbf{\algoD}} & \multicolumn{2}{c|}{\textbf{\algoS}} & \multicolumn{2}{c|}{\textbf{\algoT}} & \multicolumn{2}{c}{\textbf{\algoDIM}}\\
\cline{2-9}
Error & Time & No.\ It. & Time & No.\ It. & Time & No.\ It. & Time & No.\ It.\\
\hline
$10^{-1}$ & 1.80e-04 & 3 & 0.0708 & 1873 & 6.35e-04 & 19 & 2.10e-04 & 2 \\
$10^{-2}$ & 4.72e-04 & 12 & 0.2737 & 8274 & 0.0084 & 194 & 4.05e-04 & 6 \\
$10^{-3}$ & 0.0051 & 147 & 0.9434 & 24936 & 0.1049 & 2534 & 6.95e-04 & 10 \\
$10^{-4}$ & 0.0391 & 997 & -- & $-$ & 0.4922 & 14468 & 9.27e-04 & 15 \\
$10^{-5}$ & 0.1624 & 4810 & -- & $-$ & -- & $-$ & 0.0019 & 34 \\
$10^{-6}$ & 0.9235 & 26536 & -- & $-$ & -- & $-$ & 0.0035 & 57 \\
\hline
\end{tabular}
}
\end{table}

\begin{table}[htbp]
\centering
{
\caption{Median CPU time (seconds) and average iterations over 5 instances, $N=200$.}
\label{tab:comparison_n200}
\begin{tabular}{c*{4}{rr}}
\hline
\multicolumn{9}{c}{$N=200$}\\
\hline
  & \multicolumn{2}{c|}{\textbf{\algoD}} & \multicolumn{2}{c|}{\textbf{\algoS}} & \multicolumn{2}{c|}{\textbf{\algoT}} & \multicolumn{2}{c}{\textbf{\algoDIM}}\\
\cline{2-9}
Error & Time & No.\ It. & Time & No.\ It. & Time & No.\ It. & Time & No.\ It.\\
\hline
$10^{-1}$ & 4.39e-04 & 3 & 0.7948 & 7390 & 0.0032 & 21 & 3.47e-04 & 2 \\
$10^{-2}$ & 5.78e-04 & 6 & 2.83 & 23829 & 0.0049 & 40 & 5.93e-04 & 6 \\
$10^{-3}$ & 0.0200 & 204 & -- & $-$ & 0.4173 & 3843 & 9.54e-04 & 10 \\
$10^{-4}$ & 0.1580 & 1476 & -- & $-$ & 2.91 & 25010 & 0.0025 & 15 \\
$10^{-5}$ & 0.7962 & 7592 & -- & $-$ & -- & $-$ & 0.0033 & 25 \\
$10^{-6}$ & 4.88 & 37448 & -- & $-$ & -- & $-$ & 0.0055 & 41 \\
\hline
\end{tabular}
}
\end{table}

\begin{table}[htbp]
\centering
{
\caption{Median CPU time (seconds) and average iterations over 5 instances, $N=300$.}
\label{tab:comparison_n300}
\begin{tabular}{c*{4}{rr}}
\hline
\multicolumn{9}{c}{$N=300$}\\
\hline
  & \multicolumn{2}{c|}{\textbf{\algoD}} & \multicolumn{2}{c|}{\textbf{\algoS}} & \multicolumn{2}{c|}{\textbf{\algoT}} & \multicolumn{2}{c}{\textbf{\algoDIM}}\\
\cline{2-9}
Error & Time & No.\ It. & Time & No.\ It. & Time & No.\ It. & Time & No.\ It.\\
\hline
$10^{-1}$ & 0.0012 & 3 & 3.90 & 13120 & 0.0065 & 23 & 0.0017 & 3 \\
$10^{-2}$ & 0.0035 & 7 & 11.57 & 40694 & 0.0097 & 39 & 0.0024 & 6 \\
$10^{-3}$ & 0.0537 & 221 & -- & $-$ & 1.43 & 4869 & 0.0029 & 9 \\
$10^{-4}$ & 0.5684 & 1828 & -- & $-$ & 9.57 & 31362 & 0.0061 & 14 \\
$10^{-5}$ & 2.68 & 9191 & -- & $-$ & -- & $-$ & 0.0073 & 22 \\
$10^{-6}$ & 13.06 & 44418 & -- & $-$ & -- & $-$ & 0.0146 & 44 \\
\hline
\end{tabular}
}
\end{table}

{ From the tables and performance profiles, we draw the following observations.
\algoDIM is the best-performing method across all dimensions and tolerances. At $N=300$ and $\varepsilon = 10^{-6}$, \algoDIM requires $44$ iterations versus $44{,}418$ for \algoD---over a $1{,}000$-fold improvement.
\algoD converges at all tolerances but requires orders of magnitude more iterations.
\algoT converges at coarser tolerances ($\varepsilon \geq 10^{-4}$) but fails at $\varepsilon \leq 10^{-5}$.
\algoS converges only at $\varepsilon \geq 10^{-3}$ for $N=100$ and $\varepsilon \geq 10^{-2}$ for $N \geq 200$, due to its conservative parameters and summable inertial sequences.}

\subsection[Example: Split Feasibility Problem]{Example: Split Feasibility Problem (SFP) in $L^2[0,1]$} \label{subsec:example_2}

Let $\mathcal{H}_1$ and $\mathcal{H}_2$ be two Hilbert spaces and 
$M:\mathcal{H}_1\to \mathcal{H}_2$ be a bounded linear operator. 
The Split Feasibility Problem (SFP) \cite{censor_multiprojection_1994,cegielski_iterative_2013} 
is described as follows: 
\begin{equation}\label{eq:sfp_g}    
\text{Find} \quad x^\ast \in  \mathcal{C}\subseteq \mathcal{H}_1 
\quad \text{such that} \quad Mx^\ast \in \mathcal{Q}\subseteq \mathcal{H}_2. 
\end{equation}
where $\mathcal{C}$ and $\mathcal{Q}$ are closed convex sets. This problem appears in signal processing, particularly phase retrieval and other image restoration problems. In this example, we take $\mathcal{H}_1=\mathcal{H}_2=L^2[0,1]$ and the operator $M$ to be the identity. The two sets $\mathcal{C}$ and $\mathcal{Q}$ are defined as (see \cite{dey_hybrid_2023})
\begin{equation}
\begin{array}{lclllcl}
    \mathcal{C} &=& \{x(t) \in L^2[0,1] : \int_{0}^{1} 3 t^2\,x(t)\,dt = 0 \}, &\qquad 
    & \mathcal{Q} &=& \{x(t) \in L^2[0,1] : \int_{0}^{1} \frac{t}{3}\,x(t)\,dt \geq -1\}.
\end{array} 
\end{equation}

The set $\mathcal{C}$ is a hyperplane and the set $\mathcal{Q}$ is a halfspace. 
Then, the problem \eqref{eq:sfp_g} can be rewritten as
\begin{equation}
\label{eq:sfp_s}    
\text{Find} \quad x^\ast \in  \mathcal{C} \quad \text{such that} \quad x^\ast \in \mathcal{Q}. 
\end{equation}

{ A point $x^\ast$ is a solution of~\eqref{eq:sfp_s}} if and only if 
\[
\displaystyle x^\ast \in \operatorname{argmin}_{x\in \mathcal{C}}~ g(x):=\frac{1}{2}\|x-P_{\mathcal{Q}}(x)\|^2.
\]
Here $P_{\mathcal{Q}}$ is the metric projection onto $\mathcal{Q}$ given by
\[
\displaystyle P_{\mathcal{Q}}(x) = \operatorname{argmin}_{y\in \mathcal{Q}}~ \|x-y\|^2.
\]
The function $g$ is convex, continuous and differentiable with a Lipschitz continuous gradient $\nabla g(x) = (I-P_{\mathcal{Q}})(x)$ (see \cite{rockafellar_variational_1998}). 
Furthermore, $\nabla g$ is $1-$inverse strongly monotone \cite{dey_hybrid_2023}; that is
\[
\langle\nabla g(x)-\nabla g(y), x-y\rangle \geq \|\nabla g(x)-\nabla g(y)\|^2, \quad x, y \in L^2[0,1].
\]
This shows that $x^\ast$ solves problem \eqref{eq:sfp_s} if and only if
it solves the following variational inclusion {problem (see, e.g., \cite{Ceng2012})}: 
\begin{equation}
\label{eq:example2_vip}    
\text{Find} \quad x \in  L^2[0,1] \quad \text{such that} \quad 0 \in \partial \chi_{\mathcal{C}}(x) ~+~ \nabla g(x),
\end{equation}
where $\partial \chi_{\mathcal{C}}$ is the subdifferential of the indicator function $\chi_{\mathcal{C}}$ which is defined as

\[
\chi_{\mathcal{C}}(x) = 
\left\{
\begin{array}{lcl}
     0, &\text{if}, & x\in \mathcal{C}  \\
     +\infty, & \text{if}, & x\notin \mathcal{C}.
\end{array}\right.
\]
So, we have $A=\partial \chi_{\mathcal{C}}$, $B=\nabla g$, and therefore, $J^{A}_{\lambda}=P_{\mathcal{C}}$.

{We solve problem \eqref{eq:example2_vip} by \algoDIM, \algoT, \algoD, and \algoS. For \algoDIM we use the parameters of Remark~\ref{remark:implementation}; for \algoT and \algoS we use the same parameters as in Section~\ref{subsec:example1}. The parameters of \algoD\ are}
\[
\left\{
     \begin{array}{lclclclclclclcl}
      \alpha &=& 0.5, &\gamma&=&0.01,& \alpha_n &=&  \alpha, &\tau_n &=&\frac{1}{n^2},&\lambda_n&=&0.01\\
      \beta_n &=&  \frac{1}{\sqrt{n+1}}, &\theta_n &=&0.8-\beta_n.\\
\end{array}
\right.
\]
The initial points used for {all} algorithms are listed in Table \ref{tab:ex2_initials}. 
To proceed, we consider the inner product on $L^2[0,1]$ as 
\begin{equation}
\label{eq:inner_product}  
\langle x, y \rangle = \int_0^1 x(t) y(t) dt
\end{equation}
and the norm 
\[
\|x\|_{L^2[0,1]} = \left(\int_0^1 |x(t)|^2  dt\right)^{1/2}.
\]
The projections $P_{\mathcal{C}}$ and $P_{\mathcal{Q}}$ are defined as 
(see \cite{cegielski_iterative_2013}) 
\[
P_{\mathcal{C}}(x(t))= \begin{cases}x(t)-\frac{\left\langle x(t), 3 t^2\right\rangle}{\left\|3 t^2\right\|^2_{L^2[0,1]}} 3 t^2, & \text { if }\left\langle x(t), 3 t^2\right\rangle \neq 0 \\ x(t), & \text { if }\left\langle x(t), 3 t^2\right\rangle=0\end{cases}
\]
and
$$
P_{\mathcal{Q}}(x(t))= \begin{cases}x(t)-\frac{\langle x(t),t / 3\rangle~+~1}{\|t / 3\|_{L^2[0,1]}}(t / 3), & \text { if }\langle x(t), t / 3\rangle<-1, \\ x(t), & \text { if }\langle x(t), t / 3\rangle \geq-1 .\end{cases}
$$
The integral in \eqref{eq:inner_product} is evaluated using Simpson's rule with $100$ subintervals. The stopping criterion is, as in \cite{dey_hybrid_2023},
\[
E_n = \frac{1}{2} \left\| x_n(t) - P_{\mathcal{C}}(x_n(t))\right\|_{L^2[0,1]}^2 +
\frac{1}{2} \left\| x_n(t) - P_{\mathcal{Q}}(x_n(t))\right\|_{L^2[0,1]}^2 ~ < 10^{-3},
\]
{or a maximum of $50{,}000$ iterations, whichever occurs first.}

\begin{table}[!ht]
\centering
\begin{tabular}{c l l}
\hline
Instance & $x_0(t)$ & $x_1(t)$ \\
\hline
1 & $t^{3} e^{t}/211 + 5t$ & $\sin(t) + t^{6}$ \\
2 & $e^{t}$ & $t\, e^{t^{3}}$ \\
3 & $t + 1$ & $3t^{2} + t$ \\
4 & $11 \sin(t)$ & $5t^{2}$ \\
5 & $15 t^{3} + e^{t}/22$ & $\sin\!\left(\tfrac{t}{2}\right)$ \\
6 & $e^{t}$ & $\cos(2\pi t)$ \\
7 & $t + 1$ & $3t^{3} + 2t$ \\
8 & $11 \sin(t)$ & $\sqrt{t}$ \\
\hline
\end{tabular}
\caption{Test instances and their functions $x_0(t)$ and $x_1(t)$.}
\label{tab:ex2_initials}
\end{table}

\begin{table}[htbp]
\centering
{
\begin{tabular}{c*{4}{rr}}
\hline
\multirow{2}{*}{Instance} & \multicolumn{2}{c|}{\textbf{\algoD}} & \multicolumn{2}{c|}{\textbf{\algoS}} & \multicolumn{2}{c|}{\textbf{\algoT}} & \multicolumn{2}{c}{\textbf{\algoDIM}} \\
 & Time & No.\ It. & Time & No.\ It. & Time & No.\ It. & Time & No.\ It. \\
\hline
1 & 9.66e-05 & 5 & 2.66e-04 & 34 & 1.60e-04 & 16 & 9.96e-05 & 3 \\
2 & 7.87e-05 & 6 & 3.77e-04 & 50 & 1.46e-04 & 17 & 8.19e-05 & 4 \\
3 & 4.61e-05 & 3 & 3.85e-04 & 58 & 1.51e-04 & 19 & 5.99e-05 & 4 \\
4 & 4.33e-05 & 3 & 3.65e-04 & 54 & 1.46e-04 & 20 & 5.75e-05 & 4 \\
5 & 4.63e-05 & 4 & 3.55e-04 & 54 & 1.06e-04 & 14 & 4.39e-05 & 3 \\
6 & 4.43e-05 & 4 & 2.53e-04 & 40 & 8.46e-05 & 11 & 5.02e-05 & 4 \\
7 & 6.48e-05 & 7 & 3.68e-04 & 60 & 1.39e-04 & 20 & 5.27e-05 & 4 \\
8 & 9.19e-05 & 5 & 3.55e-04 & 49 & 1.14e-04 & 16 & 4.74e-05 & 3 \\
\hline
Median/Avg & 5.56e-05 & 4.6 & 3.60e-04 & 49.9 & 1.42e-04 & 16.6 & 5.51e-05 & 3.6 \\
\hline
\end{tabular}
}
\caption{{Median CPU time (seconds) and average iterations on the SFP ($L^2[0,1]$, $100$ subintervals, $\varepsilon = 10^{-3}$); per-instance initial points listed in Table~\ref{tab:ex2_initials}.}}
\label{tab:example2_comparizon}
\end{table}

{
  Table~\ref{tab:example2_comparizon} reports the numerical results for the split feasibility problem over the eight test instances listed in
  Table~\ref{tab:ex2_initials}. Algorithm~\ref{algo:DIPCM} (\algoDIM\!\!) converges in $3$--$4$ iterations, with an average of $3.6$, while \algoD
  requires $3$--$7$ iterations (average $4.6$) and \algoT requires $11$--$20$ iterations (average $16.6$). In contrast, \algoS requires substantially more iterations, ranging from $34$ to $60$, with an
  average of $49.9$. In terms of CPU time, \algoDIM and \algoD\ perform almost identically, with median times $5.51\times 10^{-5}$ and $5.56\times
  10^{-5}$ seconds, respectively; \algoT is moderately slower at $1.42\times 10^{-4}$ seconds, and \algoS is clearly slowest at $3.60\times 10^{-4}$ seconds. These results show that, for this SFP
  example with $L=1$, \algoDIM provides a modest improvement in iteration count over \algoD, with the other three methods all markedly more efficient than
  \algoS. The stronger advantage of \algoDIM becomes more apparent on problems with larger Lipschitz constant, as observed in
  Example~\ref{subsec:example1}.
  }

\subsection{Example: Elastic Net Regularization}\label{subsec:elastic-net}

Linear problems involving large, ill-conditioned matrices arise in
engineering, physics, and medicine \cite{zou_regularization_2005, huang_traction_2019}. 
We consider the linear systems of the form 
\begin{equation}
    \label{eq:linear_system}
    y = Xw + \varepsilon
\end{equation}
with $X \in \R^{m\times n}$ and noise vector $\varepsilon \in \R^m$. 
Typically, $X$ has a large condition number. A common remedy is to add $\ell_1$ and $\ell_2$ penalties to the least-squares residual, leading to the following minimization problem.
\begin{equation}\label{eq:naive-elastic-net}
  \min_{w\in\mathbb{R}^{n}}~ \frac{1}{2}\,\|Xw - y\|_2^2 \;+\; \varrho_1\|w\|_1 \;+\; \frac{\varrho_2}{2}\,\|w\|_2^2.
\end{equation}

The $\ell_1$ and $\ell_2$ are norms on $\R^n$ given by
\[
\|w\|_1 = \sum_{i=1}^n |w_i|, \qquad \|w\|_2 = \sqrt{\sum_{i=1}^n w^2_i}.
\]
The parameters $\varrho_1$ and $\varrho_2$ are called the regularization parameters. With $\varrho_1>0$ and $\varrho_2>0$, problem \eqref{eq:naive-elastic-net} is known as the naive elastic net (naive-EN) regularization problem \cite{zou_regularization_2005}. By setting 
\[
\Tilde{y} = \begin{bmatrix}
    y\\ 
    \mathbf{0}
\end{bmatrix}\in \R^{(m+n)}, \quad \Tilde{X} = \frac{1}{\sqrt{1+\varrho_2}}\begin{bmatrix}
    X \\
    \sqrt{\varrho_2}\mathbf{I} 
\end{bmatrix} \in \R^{(m+n)\times n}, \quad \Tilde{w} = \sqrt{1+\varrho_2}~w,
\]
Problem \eqref{eq:naive-elastic-net} becomes
\begin{equation}\label{eq:elastic-net-0}
  \min_{\Tilde{w}\in\mathbb{R}^{n}}~ \frac{1}{2}\,\|\Tilde{X}\Tilde{w} - \Tilde{y}\|_2^2 \;+\; \frac{\varrho_1}{\sqrt{1+\varrho_2}}\|\Tilde{w}\|_1.
\end{equation}
Further rescaling $\Tilde{w}$ by factor of $\sqrt{1+\varrho_2}$, we  reach the so-called elastic net (EN) regularization problem, see \cite{zou_regularization_2005} for more details and justifications, 
\begin{equation}\label{eq:elastic-net}
  \min_{\Tilde{w}\in\mathbb{R}^{n}}~ \frac{1}{2}\,\|\Tilde{X}\Tilde{w} - \Tilde{y}\|_2^2 \;+\; \frac{\varrho_1}{1+\varrho_2}\|\Tilde{w}\|_1.
\end{equation}
Elastic-net regularization has been shown to outperform $\ell_1$ and $\ell_2$ alternatives in several settings~\cite{huang_traction_2019}.

Problem \eqref{eq:elastic-net} can be reformulated as the monotone inclusion; that is

\[
\text{find  }\Tilde{w} \text{ such that }
  0 \in A(\Tilde{w}) + B(\Tilde{w}),
\]
with
$
  A=\partial\!\left(\frac{\varrho_1}{1+\varrho_2}\|\cdot\|_1\right),\quad \text{and}\quad
  B(\Tilde{w})=\nabla \left(\frac{1}{2}\,\|\Tilde{X}\Tilde{w} - \Tilde{y}\|_2^2\right) = \Tilde{X}^\top (\Tilde{X}\Tilde{w} - \Tilde{y}).
$
Moreover, the resolvent, $J_A^{\lambda}(\Tilde{w}) = (\zeta_i)_{i=1,\cdots,n}$, 
where 
$$
\zeta_i = \operatorname{sign}(\Tilde{w}_i)\max\left\{0, |\Tilde{w}_i|- \frac{\lambda\varrho_1}{1+\varrho_2}\right\},
$$ (see~\cite{vanhieuIterativeRegularizationMethods2022}).
The operator $A$ is a maximal monotone operator~\cite{Rockafellar1970MaxMonSubdiff} and $B$ is monotone and Lipschitz continuous with Lipschitz constant $L=\|\Tilde{X}\|^2$.

Following the framework of Zou and Hastie~\cite{zou_regularization_2005}, we conduct a simulation study. We generate data from the model 
\[
\Tilde{y}=\Tilde{X} \Tilde{\mathbf{w}}+\sigma \boldsymbol{\varepsilon},
\]
where $\boldsymbol{\varepsilon} \sim N(0, 1)$ (normal distribution with mean $0$ and standard deviation $1$) with $\sigma=3$. The design matrix $\Tilde{X}$ is generated  such that the pairwise correlation between $\Tilde{X}_{:,i}$ and $\Tilde{X}_{:,j}$ is equal to $(0.5)^{|i-j|}$; that is 
\[
\operatorname{corr}\left(\Tilde{X}_{:,i},\Tilde{X}_{:,j}\right)=(0.5)^{|i-j|}.
\]
The true coefficient vector is set to $\Tilde{w}^\ast=(3,1.5,0,0,2,0,0, 0 )$, yielding a sparse signal with three non-zero components out of  $8$ total covariates.

For each simulation run, we generate a dataset partitioned into training ($N_{\text{train}} = 20$), validation ($N_{\text{val}} = 20$), and test ($N_{\text{test}} = 200$) sets. We employ 10-fold cross-validation on the training data to select the optimal tuning parameters $(\varrho_1, \varrho_2)$ from predefined grids: $\varrho_1 \in \{0.0, 0.01, 0.1, 1.0, 10.0, 100.0\}$ and $\varrho_2 \in \{0.01, 0.1, 1.0, 10.0, 100.0\}$. The model with the lowest cross-validation error is then retrained on the complete training set and evaluated on the held-out test set. We replicate this procedure 50 times with different random seeds to assess variability across runs.

{We compare four algorithms: \algoDIM, \algoT, \algoD, and \algoS.} The stopping criterion is that of Example \ref{subsec:example1} with tolerance $10^{-6}$ and maximum number of iterations $50{,}000$.

We report test mean squared error (MSE), coefficient estimation error $\|\Tilde{w} - \Tilde{w}^{\ast}\|$, and variable selection metrics (precision, recall, F1-score), summarized as mean $\pm$ standard deviation over the $50$ runs.

\vspace{2mm}

{Figure~\ref{fig:combined_results} compares the four algorithms across four metrics; Tables~\ref{tab:prediction_results}, \ref{tab:selection_results}, and \ref{tab:timing_results} report prediction accuracy, variable selection, and timing, respectively.}

\begin{figure}
    \centering
    \includegraphics[width=\linewidth]{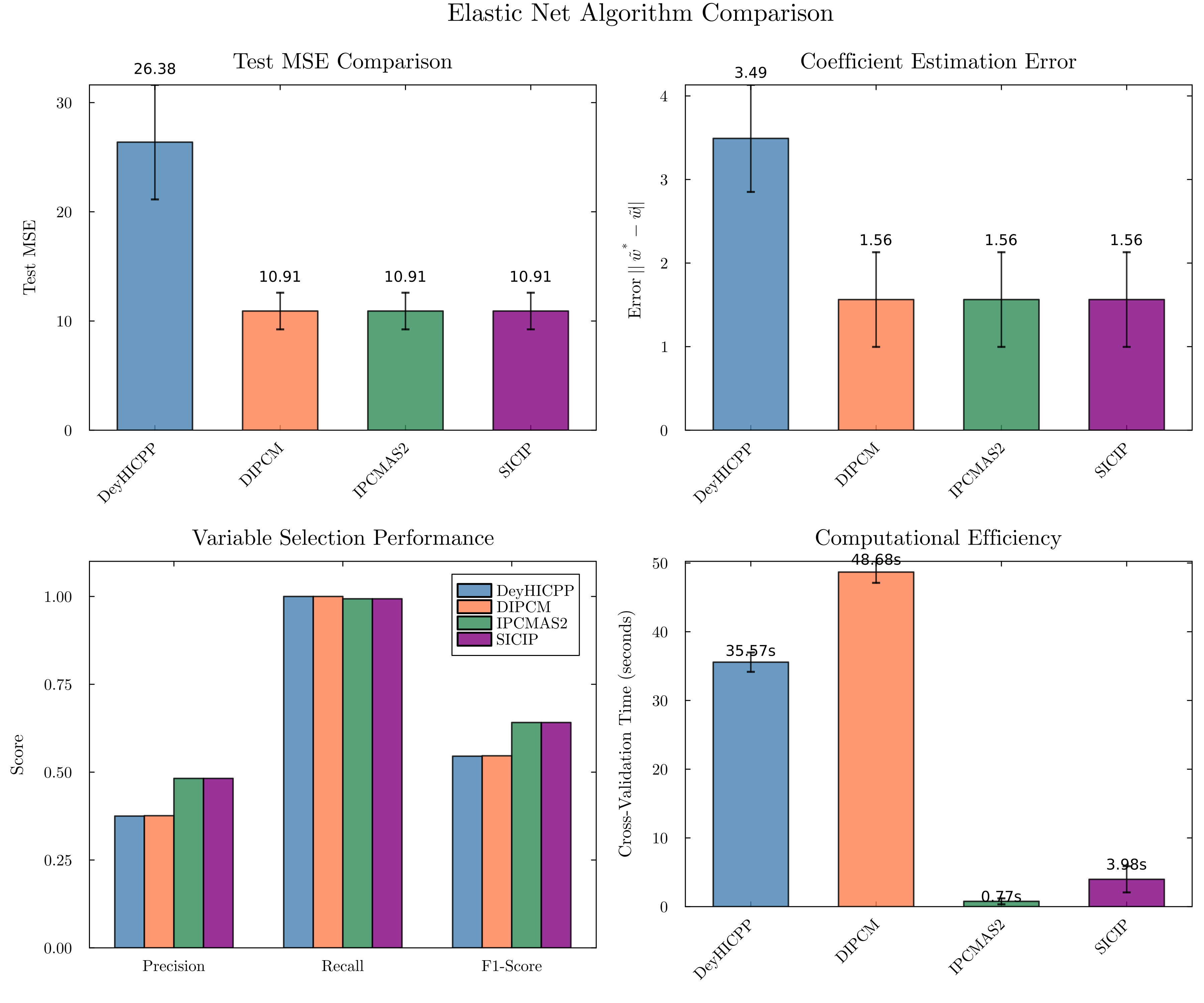}
    \caption{{Elastic net comparison: test MSE, coefficient estimation error, variable selection, and computational efficiency for \algoDIM, \algoT, \algoS, and \algoD ($50$ runs).}}
    \label{fig:combined_results}
\end{figure}
\begin{table}[htbp]
\centering
{
\caption{Prediction accuracy and coefficient estimation performance (mean $\pm$ standard deviation over 50 runs).}
\label{tab:prediction_results}
\begin{tabular}{lccc}
\toprule
\textbf{Algorithm} & \textbf{Test MSE} & \textbf{Coefficient Error} & \textbf{Relative Error} \\
\midrule
\algoDIM & $10.9124 \pm 1.6782$  & $1.5632 \pm 0.5664$ & $0.4003 \pm 0.1450$ \\
\algoT   & $10.9126 \pm 1.6784$  & $1.5633 \pm 0.5666$ & $0.4003 \pm 0.1451$ \\
\algoS   & $10.9126 \pm 1.6784$  & $1.5633 \pm 0.5666$ & $0.4003 \pm 0.1451$ \\
\algoD   & $26.3782 \pm 5.2435$  & $3.4918 \pm 0.6403$ & $0.8942 \pm 0.1640$ \\
\bottomrule
\end{tabular}
}
\end{table}

\begin{table}[htbp]
\centering
{
\caption{Variable selection performance (mean $\pm$ standard deviation over 50 runs).}
\label{tab:selection_results}
\begin{tabular}{lcccc}
\toprule
\textbf{Algorithm} & \textbf{Precision} & \textbf{Recall} & \textbf{F1-Score} & \textbf{Median Test MSE} \\
\midrule
\algoDIM & $0.38 \pm 0.01$ & $1.00 \pm 0.00$ & $0.55 \pm 0.01$ & $10.72$ \\
\algoT   & $0.48 \pm 0.12$ & $0.99 \pm 0.05$ & $0.64 \pm 0.10$ & $10.72$ \\
\algoS   & $0.48 \pm 0.12$ & $0.99 \pm 0.05$ & $0.64 \pm 0.10$ & $10.72$ \\
\algoD   & $0.38 \pm 0.00$ & $1.00 \pm 0.00$ & $0.55 \pm 0.00$ & $27.33$ \\
\bottomrule
\end{tabular}
}
\end{table}

\begin{table}[!ht]
\centering
{
\caption{Computational efficiency (mean $\pm$ standard deviation over 50 runs).}
\label{tab:timing_results}
\begin{tabular}{lccc}
\toprule
\textbf{Algorithm} & \textbf{CV Time (s)} & \textbf{Mean Training Time (s)} & \textbf{Total Training Time (s)} \\
\midrule
\algoDIM & $48.68 \pm 1.57$ & $0.180 \pm 0.065$ & $8.98$ \\
\algoT   & $0.77 \pm 0.43$  & $0.001 \pm 0.001$ & $0.03$ \\
\algoS   & $3.98 \pm 1.91$  & $0.001 \pm 0.002$ & $0.07$ \\
\algoD   & $35.57 \pm 1.41$ & $0.129 \pm 0.054$ & $6.45$ \\
\bottomrule
\end{tabular}
}
\end{table}

{\algoDIM, \algoT, and \algoS achieve nearly identical prediction accuracy (test MSE $\approx 10.91$, differing by less than $10^{-3}$), all outperforming \algoD (MSE $26.38$). This near-coincidence is consistent with the three algorithms reaching the same elastic-net minimizer---unique because the objective is strongly convex when $\varrho_2 > 0$---within the convergence tolerance $10^{-6}$. The discrepancy in variable selection (Table~\ref{tab:selection_results}) reflects threshold sensitivity at $|\hat w_i| \approx 10^{-6}$: \algoT and \algoS occasionally classify a near-zero coefficient as inactive while \algoDIM retains it. On computational cost (Table~\ref{tab:timing_results}), \algoT is the fastest method (CV time $0.77$~s, vs.\ $3.98$ for \algoS, $35.57$ for \algoD, and $48.68$ for \algoDIM), reflecting its $R$-linear convergence under strong monotonicity (verified empirically in Section~\ref{sub:r_linearity}). \algoDIM is slower on this low-dimensional, strongly convex problem; its strength is most apparent on higher-dimensional, only-monotone VIPs (Example~\ref{subsec:example1}), where it is over $10^{3}$ times faster than the alternatives.}

\subsection[Experiment: Verification of R-Linear Convergence]{Experiment: Verification of $R$-Linear Convergence}
\label{sub:r_linearity}
{To verify the R-linear convergence of \algoT\ (Section~\ref{S:4}), we run $20$ independent simulations on the elastic net problem and fit a linear model to $\log(\|w_k - w^*\|)$ versus $k$.}
We use $\rho$ to denote the R-linear convergence rate and $\mathcal{R}^2$ to denote the regression goodness-of-fit.
\begin{table}[!ht]
\centering
\caption{{R-linear convergence verification ($20$ runs, elastic net, $\mathcal{R}^2 > 0.95$ criterion).}}
\label{tab:linear_convergence}
\begin{tabular}{lccc}
\toprule
\textbf{Algorithm} & \textbf{Rate $\rho$} & \textbf{$\mathcal{R}^2$} & \textbf{Linear} \\
\midrule
{\algoDIM} & {$1.000 \pm 0.000$} & {$0.767$} & {$0\%$} \\
\algoD    & $1.000 \pm 0.000$ & $0.945$ & $10\%$  \\
{\algoS}    & {$0.996 \pm 0.002$} & {$0.980$} & {$90\%$}  \\
\textbf{\algoT} & $\mathbf{0.983 \pm 0.008}$ & $\mathbf{0.989}$ & $\mathbf{95\%}$ \\
\bottomrule
\end{tabular}
\end{table}

{\algoT exhibits the fastest R-linear convergence ($\rho = 0.983$, $\mathcal{R}^2 = 0.989$, $95\%$ of runs linear). \algoS also exhibits R-linear behavior at a slower rate ($\rho = 0.996$, $\mathcal{R}^2 = 0.980$, $90\%$ linear). Neither \algoDIM nor \algoD show R-linear behavior ($\rho \approx 1$, $\mathcal{R}^2 < 0.95$). \algoDIM's strong convergence (Theorem~\ref{thm:halpern-strong}) is not R-linear in general. Figure~\ref{fig:ipcmas2_convergence} shows the detailed convergence analysis for~\algoT.}

\begin{figure}[H]
    \centering
    \includegraphics[width=\linewidth]{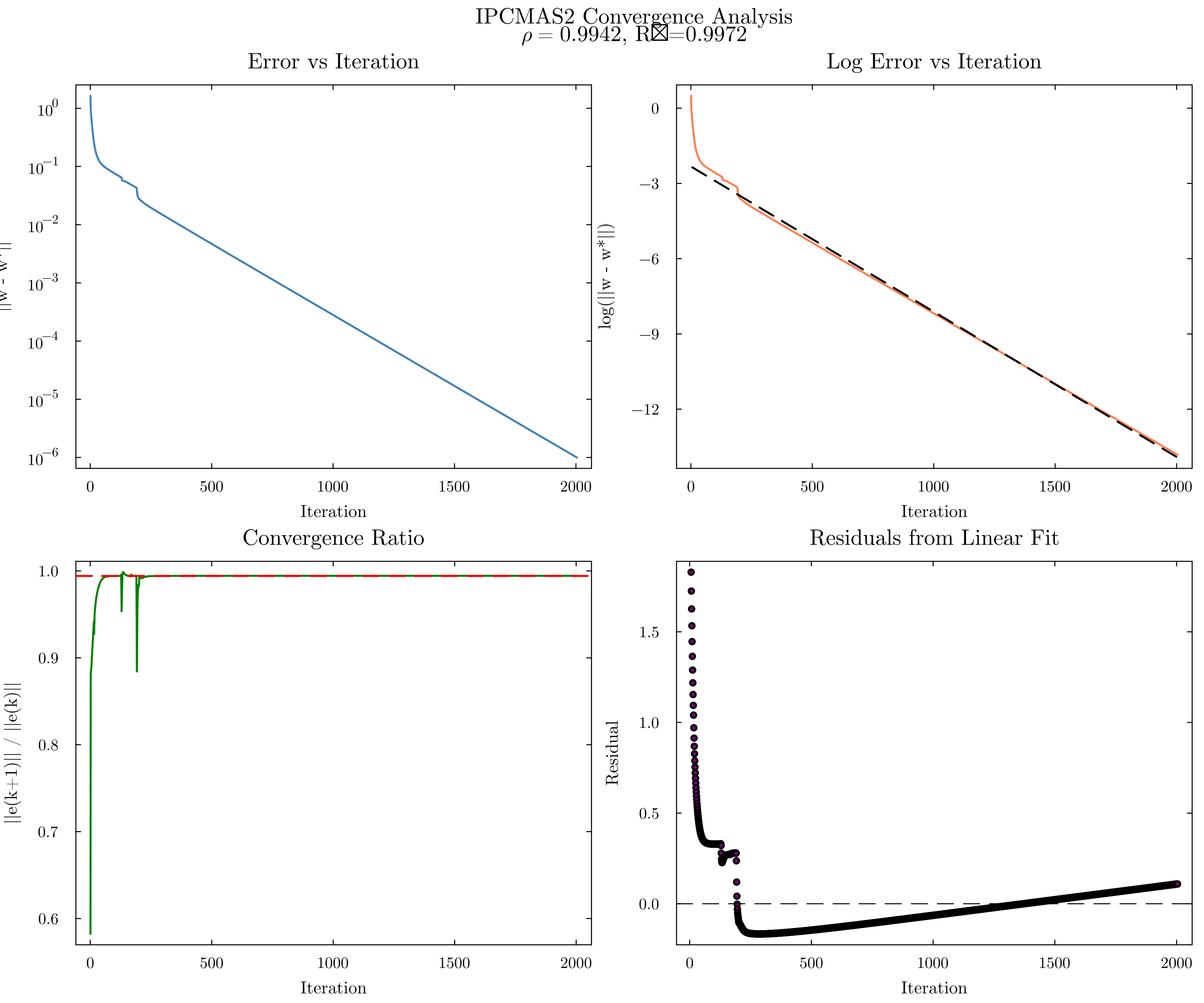}
    \caption{{Convergence analysis of \algoT on elastic net: error decay, log-error regression ($\mathcal{R}^2 = 0.989$), ratio $\|e_{k+1}\|/\|e_k\| \to 0.983$, and residuals.}}
    \label{fig:ipcmas2_convergence}
\end{figure}

{
\section{Conclusion}\label{S:6}

We studied the variational inclusion problem in a real Hilbert space and proposed three algorithms with inertial effects. The proposed algorithms combine a double inertial scheme with an adaptive stepsize that does not require knowledge of the Lipschitz constant of the operator. For these algorithms, we established weak convergence, strong convergence, and $R$-linear convergence under appropriate assumptions. We also derived a nonasymptotic convergence rate estimate for approximate solutions.

The proposed algorithms contain different variants corresponding to distinct convergence behaviors, namely weak, strong, and $R$-linear convergence. In particular, one variant ensures weak convergence, another achieves strong convergence without assuming strong monotonicity, while a modified variant attains $R$-linear convergence under strong monotonicity. The numerical results further confirm the efficiency and competitiveness of the proposed methods compared to existing algorithms.

For future work, it would be interesting to study stochastic versions of the proposed methods, as well as inexact evaluations of the resolvent, since computing the resolvent of a set-valued operator may be difficult in some applications.
}

{
\section*{Declarations}
\textbf{Conflict of interest:} The authors declare that they have no conflict of interest.

\section*{Data Availability}
No external data were used in this study. All numerical results were generated by the algorithms described in the paper. The Julia source code that reproduces the experiments in Section~\ref{S:5} is available from the corresponding author upon reasonable request.
}

\section*{Funding}
This work was supported by King Fahd University of Petroleum \& Minerals (KFUPM) through project No.~IN26075.

{
\section*{AI Use Declaration}
During the preparation of this work the authors used Claude (Anthropic) in order to assist with manuscript editing, including tightening prose, drafting and revising portions of the numerical-experiments narrative, verifying LaTeX formatting, checking internal consistency of cross-references and notation, and editing experiment scripts. After using this tool, the authors reviewed and edited the content as needed and take full responsibility for the content of the publication.
}

\section*{Acknowledgment}
The authors thank King Fahd University of Petroleum \& Minerals (KFUPM) and the Interdisciplinary Research Center for Smart Mobility and Logistics (IRC-SML), KFUPM, for institutional support.

 \printbibliography

\end{document}